\def\op{\overline{p}}
\newcommand{\Uodd}{\widetilde{U}}
\newcommand{\modd}{\mathfrak{m}_{{\text {\rm odd}}}}
\newtheorem{theorem}{Theorem}[section]
\newtheorem{lemma}{Lemma}[section]
\newtheorem{corollary}{Corollary}[section]
\newtheoremstyle{remark}
    {\dimexpr\topsep/2\relax} 
    {\dimexpr\topsep/2\relax} 
    {}          
    {}          
    {\bfseries} 
    {.}         
    {.5em}      
    {}          
\theoremstyle{remark}
\begin{document}

\title[]
{Arithmetic properties of \\MacMahon-type sums of divisors:\\
the odd case}

\author{James A. Sellers}
\address{Department of Mathematics and Statistics, 
University of Minnesota Duluth, Duluth, MN 55812, USA}
\email{jsellers@d.umn.edu}

\author{Roberto Tauraso}
\address{Dipartimento di Matematica, 
Università di Roma ``Tor Vergata'', 00133 Roma, Italy}
\email{tauraso@mat.uniroma2.it}

\subjclass[2020]{Primary 11P83; Secondary 11P81, 05A17.}

\keywords{partitions, congruences.}

\begin{abstract} 
A century ago, P. A.~MacMahon introduced two families of generating functions, 
$$
\sum_{1\leq n_1<n_2<\cdots<n_t}\prod_{k=1}^t\frac{q^{n_k}}{(1-q^{n_k})^2}
\quad\text{ and }
\sum_{\substack{1\leq n_1<n_2<\cdots<n_t\\ \text{$n_1,n_2,\dots,n_t$ odd}}}\prod_{k=1}^t\frac{q^{n_k}}{(1-q^{n_k})^2},
$$
which connect sum-of-divisors functions and integer partitions. 
These have recently drawn renewed attention. In particular, Amdeberhan, Andrews, and Tauraso extended the first family above by defining
$$
U_t(a,q):=\sum_{1\leq n_1<n_2<\cdots<n_t}\prod_{k=1}^t\frac{q^{n_k}}{1+aq^{n_k}+q^{2n_k}}
$$
for $a=0, \pm1, \pm2$ and investigated various properties, including some congruences satisfied by the coefficients of the power series representations for $U_t(a,q)$. These arithmetic aspects were subsequently expanded upon by the authors of the present work.  
Our goal here is to generalize the second family of generating functions, where the sums run over odd integers, and then apply similar techniques to show new infinite families of Ramanujan--like congruences for the associated power series coefficients. 
\end{abstract} 

\maketitle

\section{Introduction}

The two families of generating functions
\begin{equation}\label{Macgf}
\sum_{1\leq n_1<n_2<\cdots<n_t}\prod_{k=1}^t\frac{q^{n_k}}{(1-q^{n_k})^2}
\quad\text{ and }
\sum_{\substack{1\leq n_1<n_2<\cdots<n_t\\ \text{$n_1,n_2,\dots,n_t$ odd}}}\prod_{k=1}^t\frac{q^{n_k}}{(1-q^{n_k})^2},
\end{equation}
introduced by  P. A.~MacMahon in \cite{MacMahon}, relate integer partitions and sum-of-divisors functions.  Note that  the coefficient of $q^n$ of the first function in \eqref{Macgf} is the sum of products of the multiplicities $m_1,m_2,\dots,m_t$ such that $n$ can be partitioned as
$$n=m_1\cdot n_1+m_2\cdot n_2+\dots +m_t\cdot n_t$$
with $1\leq n_1<n_2<\cdots<n_t$. Restricting to the case of odd multiplicities, we find the coefficient of $q^n$ of the second function in \eqref{Macgf}. 

Such functions have attracted renewed interest as demonstrated by the considerable number of recent articles \cite{AAT1,ABS25,AOS24,AndrewsRose13, Bachmann24,BCIP24,Merca25, OnoSingh24,Rose15}. Furthermore, in \cite{AAT2}, Amdeberhan, Andrews, and Tauraso extended the first family in \eqref{Macgf} by defining
$$
U_t(a,q):=\sum_{1\leq n_1<n_2<\cdots<n_t}\prod_{k=1}^t\frac{q^{n_k}}{1+aq^{n_k}+q^{2n_k}}
$$
for $a=0, \pm1, \pm2$ (so that the roots of $1+ax+x^2=0$ are roots of unity). They investigated various properties including several congruences for their coefficients. These arithmetic aspects were later expanded upon in \cite{SellersTauraso25}.

In this work, we take a similar approach for the second family in \eqref{Macgf} by defining the generating functions
\begin{equation} \label{defofV} 
\Uodd_t(a,q):=\sum_{\substack{1\leq n_1<n_2<\cdots<n_t\\ \text{$n_1,n_2,\dots,n_t$ odd}}
}\prod_{k=1}^t\frac{q^{n_k}}{1+aq^{n_k}+q^{2n_k}}=\sum_{n\geq0} \modd(a,t;n)q^n,
\end{equation}
where once again $a=0, \pm1, \pm2$.  Since $\Uodd_t$ satisfies the identity
$$\Uodd_t(-a,q)=(-1)^t\Uodd_t(a,-q),$$
we will limit ourselves to the values $-2,0,1$.

When $t=1$, the functions $\Uodd_1(-2,q)$, $\Uodd_1(0,q)$ and $\Uodd_1(1,q)$ exhibit some known and easily identifiable features. For example, 
\begin{align*}
\Uodd_1(-2,q)&=\sum_{n\geq 1}\frac{q^{2n-1}}{(1-q^{2n-1})^2}=\sum_{n,m\geq 1}mq^{m(2n-1)}
=\sum_{n\geq 1}\Big(\sum_{\substack{d\mid n,\text{$n/d$ is odd}}}d\Big) q^n\\
&=q+2q^2+4 q^3+4q^4+6q^5+8q^6+8 q^7+8 q^8+13 q^9+\dots
\end{align*}
or equivalently, 
$$
\Uodd_1(-2,q)=\sum_{n\geq 1}\frac{q^{n}}{(1-q^{n})^2}
-\sum_{n\geq 1}\frac{q^{2n}}{(1-q^{2n})^2}\\
=\sum_{n\geq 1}\sigma(n)(q^n-q^{2n})
$$
which implies
$$\modd(-2,1;n)=\begin{cases}
\sigma(n)-\sigma(n/2)&\text{if $n$ is even,}\\
\sigma(n)&\text{if $n$ is odd,}
\end{cases}$$
where $\sigma(n)$ is the sum of the positive divisors of $n$.  See \cite[A002131]{OEIS}.  

\noindent  For $a=0$, we have
\begin{align*}
\Uodd_1(0,q)&=\sum_{n\geq 1}\frac{q^{2n-1}}{1+q^{2(2n-1)}}=
\sum_{n\geq 1} \frac{q^n(1 - q^n)(1 - q^{2n}) (1 - q^{3n})}{1 - q^{8n}}\\
&=q\frac{f_8^4}{f_4^2}= q+2q^5+q^9+2q^{13}+2q^{17}+3q^{25}+2q^{29}+2q^{37}+\dots\end{align*}
where $f_r=(q^r;q^r)_{\infty} = \prod_{k\geq 1}(1-q^{rk})$. 
Note that $\modd(0,1;4n+1)$ is the number of ways of writing $n$ as the sum of two triangular numbers.  
$$\modd(0,1;4n+1)=[q^n]\frac{f_2^4}{f_1^2}
=[q^n]\Big(\sum_{n\geq 0} q^{T_n}\Big)^2$$
where $T_n=n(n+1)/2$ is the $n$th triangular number.  See \cite[A008441]{OEIS}.   
 Moreover, it can be verified that, for all $n\geq 0$, 
$$\modd(0,1;4(9n+5)+1)=\modd(0,1;4(9n+8)+1)= 0.$$

\noindent On the other hand, for $a=1$, we have
\begin{align*}
\Uodd_1(1,q)&=\sum_{n\geq 1}\frac{q^{2n-1}}{(1+q^{2n-1}+q^{2(2n-1)})}
=\sum_{n\geq 1}\frac{q^{2n-1}(1-q^{2n-1})}{(1-q^{3(2n-1)})}\\
&=\sum_{n\geq 1}\frac{q^n(1-q^n)}{1-q^{3n}}
-\sum_{n\geq 1}\frac{q^{2n}(1-q^{2n})}{1-q^{6n}}\\
&=\sum_{n\geq 1}\frac{q^n-2q^{2n}+2q^{4n}-q^{5n}}{1-q^{6n}}\\
&=q - q^2 + q^3 + q^4 - q^6 + 2q^7 - q^8 + q^9 + \dots .
\end{align*}
that is
$$\modd(1,1;n)=\tau_1(n)-2\tau_2(n)+2\tau_4(n)-\tau_5(n)$$
where $\tau_j(n)$ is the number of positive divisors $d$ of $n$ such that $d\equiv j \pmod{6}$.  See \cite[A093829]{OEIS}.
Notice that 
\begin{equation}
\label{modd1_1_6n5_equal_0}
\modd(1,1;6n+5)=\tau_1(6n+5)-\tau_5(6n+5)=0
\end{equation}
because if $d$ divides $6n+5$ then $d\equiv 1 \pmod6$ if and only if $n/d\equiv 5 \pmod6$. 

With the above in mind, we see that the generating functions $\Uodd_t(a,q)$ naturally generalize various well--studied arithmetic functions.  For $t>1$, our first goal is to obtain a more convenient formula for $\Uodd_t(a,q)$ that avoids multiple summations. This will be discussed in Section \ref{explicitS}, where we find that $\Uodd_t(a,q)$ can be written as 
\begin{equation}\label{productgf}
\sum_{k\geq 0}b_k(a) q^k \cdot \sum_{n\geq 0}c_n(a,t)q^{r(n)}
\end{equation}
where $r(n)$ is a quadratic polynomial.  (Similar work was completed in \cite{SellersTauraso25} for the related functions $U_t(a,q)$.)  

After laying this groundwork, we turn our attention to congruences.
Some of these congruences are straightforward.  For example, thanks to \eqref{defofV}, we immediately see that, for any $t$,
\begin{align}
\Uodd_t(-2, q) &\equiv \Uodd_t(0, q) \pmod{2}\nonumber,\\
\label{easy3}
\Uodd_t(-2, q) &\equiv \Uodd_t(1, q) \pmod{3}.
\end{align}
In order to address less trivial congruences, we take advantage of the product in \eqref{productgf} and study the arithmetic properties of the coefficients $b_k(a)$ and $c_n(a,t)$.
From there, starting in Section \ref{am2S} and beyond, we will derive our results for $\modd(a,t;n)$, grouped according to the value of $a$.

It is worth pointing out that, for $a = 1$, we will come across the prefactor
$$\frac{f_1 f_6}{f_2^2 f_3}=1-q+q^2-q^4+2q^5-3q^6+4q^7-5q^8+7q^9+\dots $$
which appears to be novel in the literature and exhibits intriguing arithmetic properties which are of independent interest.  These will be discussed in Section \ref{pre2S}.

\section{Explicit form of $\Uodd_t(a,q)$}\label{explicitS}

In \cite[Corollary 2]{AndrewsRose13}, we have an explicit form for the case $a=-2$, namely
\begin{equation}\label{exformm2}
\Uodd_t(-2,q) = \frac{f_2}{f_1^2}\cdot
\sum_{n=1}^\infty c(-2,t)q^{n^2}
\end{equation}
where 
$$c(-2,t)=(-1)^{n+t} \frac{2n}{n+t}\binom{n+t}{2t}$$
and the prefactor is the generating function of number of overpartitions of $n$:
$$\frac{f_2}{f_1^2}=\frac{(-q;q)_\infty}{(q;q)_\infty}
=\sum_{n=0}^{\infty}\op(n)q^n=
1+2 q+4 q^2+8 q^3+14 q^4+24 q^5+\dots$$
An overpartition of $n$ is a non-increasing sequence of natural numbers whose sum is $n$ in which the first occurrence of a number may be overlined.  

The formula \eqref{exformm2} was derived by making use of the properties of the Chebychev polynomials of the first kind, defined by
$T_n(\cos\theta)=\cos(n\theta)$,
and of their representation in terms of the sum
\begin{equation}\label{ChebychevP}
T_n(x)=\frac{n}{2}\sum_{k=0}^{\lfloor n/2\rfloor}
\frac{(-1)^k}{n-k}\binom{n-k}{k}(2x)^{n-k}.
\end{equation}
As we will see below, the same method is effective for $a=0$ and $a=1$ as well.
Noting that the Chebychev polynomial $T_n(x)$ is alternately an even and odd function, depending on $n$, we define
$$to_n(x)=\frac{T_{2n+1}(\sqrt{x})}{\sqrt{x}}
\quad\text{and}\quad
te_n(x)=T_{2n}(\sqrt{x}).$$
Then, by \cite[Theorem 1]{AndrewsRose13},
$$
\sum_{n\geq 0}to_n\left(\frac{x}{4}\right)q^{\binom{n+1}{2}}
=(q;q)^3_{\infty}\prod_{n=1}^{\infty}\left(1+\frac{xq^n}{(1-q^n)^2}\right)
$$
and
\begin{equation}\label{teven}
1 + 2\sum_{n>0}te_n\left(\frac{x}{4}\right)q^{n^2}=\frac{(q;q)_\infty}{(-q;q)_\infty} \prod_{n=1}^\infty\Big(1 + \frac{xq^{2n-1}}{(1 - q^{2n-1})^2}\Big).
\end{equation}
In \cite[Theorem 2.1]{AAT2}, the authors established
$$
\sum_{t\geq0} U_t(a,q)x^t
=\prod_{n=1}^{\infty}\frac{1}{(1+aq^n+q^{2n})(1-q^{n})}
\cdot \sum_{n\geq 0}to_n\left(\frac{x+a+2}{4}\right)q^{\binom{n+1}{2}}.
$$
Along the same lines we show the following identity.
\begin{theorem} We have
\begin{align}\label{id2} 
\sum_{t\geq0} \Uodd_t(a,q)x^t
&=\prod_{n=1}^{\infty}\frac{1}{(1+aq^{2n-1}+q^{2(2n-1)})(1-q^{2n})}\nonumber\\
&\qquad\qquad\cdot \left(1+2\sum_{n\geq 1}te_n\left(\frac{x+a+2}{4}\right)q^{n^2}\right).
\end{align}
\end{theorem}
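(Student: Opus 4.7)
The plan is to follow the blueprint of \cite[Theorem 2.1]{AAT2}, with the ``odd'' Chebyshev identity used there replaced by the ``even'' one \eqref{teven}. Concretely, I would substitute $x\mapsto x+a+2$ into \eqref{teven}, rearrange each factor in the resulting right-hand product so that the denominator becomes $1+aq^{2n-1}+q^{2(2n-1)}$, and identify the remaining factor as the bivariate generating function $\sum_{t\geq 0}\Uodd_t(a,q)\,x^t$.

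The key algebraic step is the polynomial identity
$$(1-q^{m})^2+(x+a+2)q^{m}=\bigl(1+aq^{m}+q^{2m}\bigr)+xq^{m},$$
which, after division by $(1-q^{m})^2$ and application with $m=2n-1$, rewrites
$$\prod_{n\geq 1}\!\left(1+\frac{(x+a+2)q^{2n-1}}{(1-q^{2n-1})^2}\right)=\prod_{n\geq 1}\frac{1+aq^{2n-1}+q^{2(2n-1)}}{(1-q^{2n-1})^2}\cdot\prod_{n\geq 1}\!\left(1+\frac{xq^{2n-1}}{1+aq^{2n-1}+q^{2(2n-1)}}\right).$$
The second product expands, by the standard ``select a strictly increasing $t$-tuple of odd indices $n_1<\cdots<n_t$'' argument, as $\sum_{t\geq 0}\Uodd_t(a,q)\,x^t$, which is exactly the left-hand side of \eqref{id2}.

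Substituting back into \eqref{teven} and solving for $\sum_t\Uodd_t(a,q)\,x^t$ reduces the claim to the prefactor simplification
$$\frac{(-q;q)_\infty}{(q;q)_\infty}\prod_{n\geq 1}(1-q^{2n-1})^2=\prod_{n\geq 1}\frac{1}{1-q^{2n}},$$
which follows at once from the standard identities $(q;q)_\infty(-q;q)_\infty=(q^2;q^2)_\infty$ and $\prod_{n\geq 1}(1-q^{2n-1})=(q;q)_\infty/(q^2;q^2)_\infty$, combined with the already extracted product $\prod_{n\geq 1}(1+aq^{2n-1}+q^{2(2n-1)})/(1-q^{2n-1})^2$. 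I do not anticipate a genuine obstacle: the whole argument is a chain of formal manipulations valid uniformly in $a\in\{-2,0,1\}$, since the crucial step is a polynomial identity in $a$ and $x$, and the coefficient of $x^t$ on both sides of \eqref{teven} after substitution is a well-defined formal power series in $q$. The only item requiring care is the bookkeeping between the three infinite products in the middle display, which cleanly separates into the $x$-dependent piece $\sum_t\Uodd_t(a,q)\,x^t$ and the $x$-free piece that ultimately produces the prefactor in \eqref{id2}.
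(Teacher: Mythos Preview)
Your proposal is correct and is essentially the same as the paper's proof: both hinge on the polynomial identity $(1-q^{2n-1})^2+(x+a+2)q^{2n-1}=(1+aq^{2n-1}+q^{2(2n-1)})+xq^{2n-1}$ to pass between the product $\prod_n\bigl(1+\tfrac{(x+a+2)q^{2n-1}}{(1-q^{2n-1})^2}\bigr)$ and the bivariate generating function $\sum_t\Uodd_t(a,q)x^t$, and both finish by invoking \eqref{teven} and the elementary prefactor simplification $\frac{(-q;q)_\infty}{(q;q)_\infty}\prod_{n\geq1}(1-q^{2n-1})^2=\frac{1}{(q^2;q^2)_\infty}$. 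The only cosmetic difference is direction: the paper starts from $\sum_t\Uodd_t(a,q)x^t$ and manipulates toward the Chebyshev sum, whereas you start from \eqref{teven} with $x\mapsto x+a+2$ and manipulate toward the generating function.
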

\begin{proof} We have that
\begin{align*} \sum_{t\geq0} \Uodd_t(a,q)x^t
&=\prod_{n=1}^{\infty}\left(1+\frac{xq^{2n-1}}{1+aq^{2n-1}+q^{2(2n-1)}}\right) \\
&=\prod_{n=1}^{\infty}\frac{1}{1+aq^{2n-1}+q^{2(2n-1)}}
\cdot \prod_{n=1}^{\infty}\left(1+(x+a)q^{2n-1}+q^{2(2n-1)}\right) \\
&=\prod_{n=1}^{\infty}\frac{(1-q^{2n-1})^2}{1+aq^{2n-1}+q^{2(2n-1)}}
\cdot \prod_{n=1}^{\infty}\left(1+\frac{(x+a+2)q^{2n-1}}{(1-q^{2n-1})^2}\right) \\
&=\prod_{n=1}^{\infty}\frac{1}{(1+aq^{2n-1}+q^{2(2n-1)})(1-q^{2n})}\nonumber\\
&\qquad\qquad\cdot \left(1+2\sum_{n\geq 1}te_n\left(\frac{x+a+2}{4}\right)q^{n^2}\right) 
\end{align*}
where at the last step we applied \eqref{teven}.
\end{proof}

We are now in a position to present explicit formulas for $a=0$ and $a = 1$.

\noindent By \eqref{ChebychevP}, we obtain
\begin{align}\label{tesum}
[x^t]\,2 te_n\left(\frac{x+a+2}{4}\right)
&=[x^t]\,2n\sum_{k=0}^{n}
\frac{(-1)^{n-k}}{n+k}\binom{n+k}{2k}(x+a+2)^{k}\nonumber \\
&=\sum_{k=t}^{n}(-1)^{n-k}
\frac{2n}{n+k}\binom{n+k}{2k}\binom{k}{t}(a+2)^{k-t}.
\end{align}
Moreover, the binomial sum on the right-hand side of \eqref{tesum} admits the following useful representation, which is derived by using Riordan arrays (see \cite{Sprugnoli94} and also \cite[Appendix 2]{AAT2}).

\begin{lemma} For $n\geq t\geq 1$, we have that
\begin{equation}\label{riordan}
\sum_{k=t}^{n}(-1)^{n-k}
\frac{2n}{n+k}\binom{n+k}{2k}\binom{k}{t}(a+2)^{k-t}
=[z^n]\frac{z^t-z^{t+2}}{(1-az+z^2)^{t+1}}.
\end{equation}
\end{lemma}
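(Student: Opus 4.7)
The plan is to recognise that, via \eqref{tesum}, the left-hand side of \eqref{riordan} is the coefficient $[x^t]\,2te_n\!\bigl(\tfrac{x+a+2}{4}\bigr)$, and then to compute a closed form for the bivariate generating function
$$
\sum_{n\geq 0}2te_n\!\left(\tfrac{x+a+2}{4}\right)z^n,
$$
from which both sides of \eqref{riordan} will emerge as the same coefficient in $x$ and $z$.

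The first step is to reduce $te_n(u)=T_{2n}(\sqrt u)$ to an ordinary Chebyshev polynomial. The duplication identity $\cos(2n\theta)=T_n(2\cos^2\theta-1)$ gives $T_{2n}(\sqrt u)=T_n(2u-1)$, and combined with the classical generating function $\sum_{n\geq 0}T_n(v)z^n=(1-vz)/(1-2vz+z^2)$ this yields
$$
\sum_{n\geq 0}2te_n(u)\,z^n=\frac{2-2(2u-1)z}{1-2(2u-1)z+z^2}=1+\frac{1-z^2}{1-2(2u-1)z+z^2},
$$
where in the last step I use the algebraic identity $2-2(2u-1)z=(1-2(2u-1)z+z^2)+(1-z^2)$. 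Substituting $u=(x+a+2)/4$, so that $2(2u-1)=x+a$, transforms the right-hand side into $1+\dfrac{1-z^2}{1-(x+a)z+z^2}$.

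The second step is to extract the coefficient of $x^t$ for $t\geq 1$. Viewing the denominator as $(1-az+z^2)-xz$ and expanding geometrically in $xz/(1-az+z^2)$ produces
$$
\frac{1-z^2}{(1-az+z^2)-xz}=\sum_{t\geq 0}\frac{(1-z^2)\,z^t}{(1-az+z^2)^{t+1}}\,x^t,
$$
so that $[x^t]\dfrac{1-z^2}{1-(x+a)z+z^2}=\dfrac{z^t-z^{t+2}}{(1-az+z^2)^{t+1}}$. Taking $[z^n]$ for $n\geq 1$ and invoking \eqref{tesum} then establishes \eqref{riordan}.

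The main obstacle is psychological rather than computational: the coefficients $(-1)^{n-k}\tfrac{2n}{n+k}\binom{n+k}{2k}$ appearing in \eqref{tesum} look forbidding, but recognising them as the coefficients of $T_{2n}$ and collapsing $T_{2n}(\sqrt u)$ to $T_n(2u-1)$ reduces the problem to one application of the Chebyshev generating function together with a one-line geometric-series expansion, bypassing any direct manipulation of the binomial sum.
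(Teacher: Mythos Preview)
Your argument is correct. It takes a genuinely different route from the paper's proof, which invokes a Riordan-array identity
\[
\sum_{k\geq 0}(-1)^{n-k}\frac{2n}{n+k}\binom{n+k}{2k}c_k=[z^n]\,\frac{1-z}{1+z}\,F\!\left(\frac{z}{(1+z)^2}\right),\qquad F(z)=\sum_{k\geq 0}c_kz^k,
\]
and then specialises $F(z)=z^t/(1-(a+2)z)^{t+1}$, followed by an algebraic simplification. Your approach instead stays with the Chebyshev polynomials themselves: the key observation $T_{2n}(\sqrt{u})=T_n(2u-1)$ lets you feed the classical generating function $\sum_{n\geq 0}T_n(v)z^n=(1-vz)/(1-2vz+z^2)$ directly, and a single geometric expansion in $x$ finishes the job. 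The paper's method is more modular (the Riordan identity is a general tool that applies to any inner sequence $c_k$), whereas your method is more self-contained and arguably more transparent here, since the reader need not look up or verify the Riordan transform. Both arrive at the same rational-function coefficient with comparable effort.
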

\begin{proof} If $F(z)= \sum_{k\geq 0}c_k z^k$ then
$$\sum_{k\geq 0}(-1)^{n-k}\frac{2n}{n+k}\binom{n+k}{2k}c_k
=[z^n]\,\frac{1-z}{1+z}\cdot
F\left(\frac{z}{(1+z)^2}\right).$$
Hence, by letting
$$F(z)=\sum_{k\geq t}\binom{k}{t}(a+2)^{k-t}z^k
=\frac{z^t}{(1-(a+2)z)^{t+1}},$$
we find that the left-hand side of \eqref{riordan} is equal to
\begin{align*}
[z^n]\,\frac{1-z}{1+z}\cdot
F\left(\frac{z}{(1+z)^2}\right)
&=[z^n]\,\frac{1-z}{1+z}\cdot
\frac{\Big(\frac{z}{(1+z)^2}\Big)^t}{\Big(1-(a+2)\frac{z}{(1+z)^2}\Big)^{t+1}}\\
&=[z^n]
\frac{z^t-z^{t+2}}{(1-az+z^2)^{t+1}}.
\end{align*}
\end{proof}

\noindent Letting $a=0$ in \eqref{riordan}, we find
$$
2n\sum_{k=t}^{n}
\frac{(-1)^{n-k}}{n+k}\binom{n+k}{2k}\binom{k}{t}2^{k-t}
=\begin{cases}
(-1)^{\frac{n-t}{2}}\frac{2n}{n+t}\binom{\frac{n+t}{2}}{t}&\text{if $n+t$ is even,}\\
0&\text{if $n+t$ is odd.}
\end{cases}
$$
Hence, from \eqref{id2}, by separately considering the even and odd cases, we obtain
\begin{equation}\label{exform0even}
\Uodd_{2t}(0,q) = \frac{f_8}{f_4^2}\cdot 
\sum_{n=1}^\infty (-1)^{n+t} \frac{2n}{n+t}\binom{n+t}{2t}q^{4n^2}=\Uodd_t(-2,q^4)
\end{equation}
and
\begin{equation}\label{exform0odd}
\Uodd_{2t+1}(0,q) = \frac{f_8}{f_4^2}\cdot
\sum_{n=1}^\infty (-1)^{n-t-1} \frac{2n-1}{n+t}\binom{n+t}{2t+1}q^{(2n-1)^2}=qW_t(q^4)
\end{equation}
with
\begin{equation}\label{exformW}
W_t(q)= \frac{f_2}{f_1^2}\cdot
\sum_{n=1}^\infty c_n(0,t)q^{n(n-1)}
\end{equation}
and
$$c_n(0,t)=(-1)^{n-t-1} \frac{2n-1}{n+t}\binom{n+t}{2t+1}.$$
Finally, for $a=1$, from \eqref{id2} and \eqref{tesum}, we get
\begin{equation}\label{exform1}
\Uodd_{t}(1,q)=\frac{f_1 f_6}{f_2^2 f_3}
\cdot \sum_{n=1}^{\infty}c_n(1,t)q^{n^2}
\end{equation}
where 
$$c_n(1,t)=\sum_{k=t}^n(-1)^{n-k}\frac{2n}{n+k}\binom{n+k}{2k}
\binom{k}{t}3^{k-t}.$$
For $a=1$, equation \eqref{riordan} does not yield a closed-form expression for $c_n(1,t)$, but it will prove useful later for studying its arithmetic properties.

\section{Information about $\frac{f_1 f_6}{f_2^2 f_3}$}\label{pre2S}

Let $a(n)$ be defined by the generating function identity 
\begin{equation}
\label{genfn_an} 
A(q):=\sum_{n=0}^\infty a(n)q^n = \frac{f_1 f_6}{f_2^2 f_3}.
\end{equation}
This is the function which appears in \eqref{exform1} above. 
Our goal in this section is to prove a number of Ramanujan--like congruences satisfied by $a(n)$.  These will then be used in Section \ref{a1S} to prove several arithmetic properties satisfied by $\modd(1,t;N)$ for various values of $t$ and $N$.  To begin this analysis, we list several $q$--series identities that will be used in our proofs below.  
\begin{lemma}
\label{lemma:f1_over_f3} 
We have 
$$\frac{f_1}{f_3} = \frac{f_2f_{16}f_{24}^2}{f_6^2f_8f_{48}} -q\frac{f_2f_8^2f_{12}f_{48}}{f_4f_6^2f_{16}f_{24}}.$$
\end{lemma}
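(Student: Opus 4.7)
The right-hand side has the form $A(q^2) - q\,B(q^2)$ (every eta-product subscript appearing on the right is even), so the identity asserts the $2$-dissection of $f_1/f_3$ into its even- and odd-exponent parts in~$q$. The plan is to verify precisely this dissection.

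My first step is to rescale by multiplying both sides by $f_6^2/f_2$, which clears the prefactors and yields the equivalent, cleaner identity
$$\frac{f_1 f_6^2}{f_2 f_3}=\frac{f_{16}f_{24}^2}{f_8 f_{48}}-q\,\frac{f_8^2 f_{12} f_{48}}{f_4 f_{16} f_{24}}.$$
Using $\psi(q)=f_2^2/f_1$, the left-hand side factors cleanly as $(f_1/f_2)\,\psi(q^3)$, and equivalently as $\psi(-q)\psi(q^3)/f_4$ via the standard identity $\psi(-q)=f_1 f_4/f_2$. The task therefore reduces to computing the $2$-dissection of the theta-function product $\psi(-q)\psi(q^3)$ (up to the factor $1/f_4$), which is a classical type of dissection.

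I would carry this out by Jacobi's triple product: write
$$\psi(-q)\psi(q^3)=\sum_{m,n\geq 0}(-1)^{m(m+1)/2}q^{\,m(m+1)/2+3n(n+1)/2},$$
split the double sum according to the parity of the exponent $m(m+1)/2+3n(n+1)/2$, and re-identify each resulting single-variable theta series as an eta quotient by applying Jacobi's triple product in reverse. An alternative I would try first is to locate the identity in the existing literature: Berndt's \emph{Ramanujan's Notebooks}~III--IV and Hirschhorn's \emph{The Power of~$q$} contain extensive tables of $2$-dissections at levels dividing $48$, and the shape of the right-hand side above strongly resembles entries in those tables.

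The main obstacle I expect is eta-product bookkeeping rather than any conceptual difficulty: the subscripts $\{4,8,12,16,24,48\}$ place the identity on $\Gamma_0(48)$, and reconciling the output of the Jacobi-triple-product dissection with the exact eta quotients displayed requires several applications of the standard formula for $(-q^a;q^b)_\infty$ together with repeated cancellation. As a safety net, after clearing denominators both sides are weight-zero modular functions on $\Gamma_0(48)$, so by the Sturm bound it suffices to check agreement of the first $O(48)$ Fourier coefficients, which is a routine computer verification.
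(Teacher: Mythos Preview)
Your proposal is correct and in fact more substantive than what the paper does: the paper's entire proof of this lemma is a single citation to Hirschhorn \cite[(30.10.1)]{Hir17}, i.e.\ it simply quotes the identity from \emph{The Power of~$q$}. You explicitly mention consulting Hirschhorn as one option, so in that sense your approaches coincide; beyond that, your sketch of an actual derivation via the $2$-dissection of $\psi(-q)\psi(q^3)$ (after the valid rescaling $f_1 f_6^2/(f_2 f_3)=\psi(-q)\psi(q^3)/f_4$) is a sound independent route that the paper does not pursue, and your Sturm-bound fallback on $\Gamma_0(48)$ would also settle it rigorously.
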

\begin{proof}
See Hirschhorn \cite[(30.10.1)]{Hir17}.  
\end{proof}

\begin{lemma}
\label{lemma:f3_over_f1} 
We have 
$$\frac{f_3}{f_1} = \frac{f_4f_{6}f_{16}f_{24}^2}{f_2^2f_8f_{12}f_{48}} +q\frac{f_6f_8^2f_{48}}{f_2^2f_{16}f_{24}}.$$
\end{lemma}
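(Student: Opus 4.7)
The plan is to derive Lemma 3.2 directly from Lemma 3.1 by a rationalization trick. Write the conclusion of Lemma 3.1 as
$$\frac{f_1}{f_3}=A-qB,\qquad A=\frac{f_2f_{16}f_{24}^2}{f_6^2f_8f_{48}},\qquad B=\frac{f_2f_8^2f_{12}f_{48}}{f_4f_6^2f_{16}f_{24}}.$$
Observe that every subscript appearing in $A$ and $B$ is even, so $A$ and $B$ are power series in $q^2$. Hence replacing $q$ by $-q$ flips only the explicit $q$ in front of $B$, producing the companion identity
$$\frac{f_1(-q)}{f_3(-q)}=A+qB.$$
Multiplying the two identities yields a difference of squares $A^2-q^2B^2$ on the right, and reducing the left-hand side to a single expression in the standard $f_r$'s is what makes the trick work.

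Next I would simplify the left-hand product using the well-known substitutions $f_1(-q)=f_2^3/(f_1f_4)$ and $f_3(-q)=f_6^3/(f_3f_{12})$ (which follow from $(-q;q)_\infty=f_2/f_1$ applied at $q$ and $q^3$). The $f_1$ and $f_3$ factors cancel against those in Lemma 3.1, giving the clean identity
$$A^2-q^2B^2=\frac{f_1}{f_3}\cdot\frac{f_2^3\,f_3\,f_{12}}{f_1\,f_4\,f_6^3}=\frac{f_2^3\,f_{12}}{f_4\,f_6^3}.$$

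To finish, I rationalize using the conjugate:
$$\frac{f_3}{f_1}=\frac{1}{A-qB}=\frac{A+qB}{A^2-q^2B^2}=\frac{f_4\,f_6^3}{f_2^3\,f_{12}}\,(A+qB).$$
Expanding the two resulting products and cancelling common $f_r$ factors should produce exactly
$$\frac{f_4f_6f_{16}f_{24}^2}{f_2^2f_8f_{12}f_{48}}+q\,\frac{f_6f_8^2f_{48}}{f_2^2f_{16}f_{24}},$$
which is the claimed formula.

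The only delicate step is the bookkeeping in the two cancellations at the end, but there is no real obstacle; every step consists of elementary manipulation of infinite products, and the structural input (Lemma 3.1 together with the parity identities $f_1(-q)=f_2^3/(f_1f_4)$ and $f_3(-q)=f_6^3/(f_3f_{12})$) already forces the answer. An alternative, equally valid route would be to start from Lemma 3.1 and multiply both sides by the expression the lemma is claimed to equal, verifying that the product is $1$; this is the same computation repackaged and would provide a good cross-check.
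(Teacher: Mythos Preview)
Your derivation is correct: the conjugation trick $q\mapsto -q$ applied to Lemma~3.1, together with the identities $f_1(-q)=f_2^3/(f_1f_4)$ and $f_3(-q)=f_6^3/(f_3f_{12})$, gives $A^2-q^2B^2=f_2^3f_{12}/(f_4f_6^3)$, and the final simplifications
\[
\frac{f_4f_6^3}{f_2^3f_{12}}\cdot A=\frac{f_4f_6f_{16}f_{24}^2}{f_2^2f_8f_{12}f_{48}},\qquad
\frac{f_4f_6^3}{f_2^3f_{12}}\cdot B=\frac{f_6f_8^2f_{48}}{f_2^2f_{16}f_{24}}
\]
check out exactly. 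The only cosmetic improvement would be to replace the hedged ``should produce'' with the actual two-line computation, since the cancellations are completely mechanical.

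Your route differs from the paper's: the paper does not prove this identity at all but simply cites Hirschhorn's book \cite[(30.10.3)]{Hir17}. What you have written is a genuine, self-contained argument that deduces Lemma~3.2 from Lemma~3.1 via the standard rationalization device for inverting a $2$-dissection. This buys you independence from the external reference (modulo Lemma~3.1, which the paper also only cites), and it illustrates the general principle that once one of $f_1/f_3$, $f_3/f_1$ is $2$-dissected, the other follows automatically. The paper's approach is of course shorter on the page, but yours is more informative.
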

\begin{proof}
See Hirschhorn \cite[(30.10.3)]{Hir17}.  
\end{proof}

\begin{lemma}
\label{lemma:f1f3} 
We have 
$${f_1f_3} = \frac{f_2f_8^2f_{12}^4}{f_4^2f_6f_{24}^2} -q\frac{f_4^4f_6f_{24}^2}{f_2f_{8}^2f_{12}^2}.$$
\end{lemma}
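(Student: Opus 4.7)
The plan is to recognize this as a standard 2-dissection of the eta product $f_1 f_3$, fully analogous to Lemmas \ref{lemma:f1_over_f3} and \ref{lemma:f3_over_f1}. The most expedient proof is to cite Hirschhorn's compendium \cite{Hir17}; the identity is almost surely catalogued in Chapter 30 of that book (the same chapter from which the two preceding lemmas are drawn), plausibly at equation (30.10.2) or a neighboring equation. A single citation would then suffice, matching the style of the two preceding proofs.

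For an independent derivation, I would proceed as follows. Starting from
$$f_1 f_3 = \frac{f_1}{f_3}\cdot f_3^2,$$
I would combine the 2-dissection of $f_1/f_3$ from Lemma \ref{lemma:f1_over_f3} with a 2-dissection of $f_3^2$ obtained from the classical identity $f_1^2 = f_2\,\varphi(-q)$ together with the well-known splitting $\varphi(-q) = \varphi(q^4) - 2q\,\psi(q^8)$. Converting to eta-product form and then substituting $q\mapsto q^3$ gives
$$f_3^2 = \frac{f_6 f_{24}^5}{f_{12}^2 f_{48}^2} - 2q^3\,\frac{f_6 f_{48}^2}{f_{24}}.$$
Multiplying by the two-term expression for $f_1/f_3$ produces four terms. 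Since the factors of $f_1/f_3$ are series in $q^2$ and the two summands of $f_3^2$ sit in complementary parity classes (powers $\equiv 0$ and $\equiv 3\pmod 6$), the contributions cleanly distribute: two terms land in the even part of $f_1 f_3$ (from $q^0\cdot q^0$ and $q\cdot q^3$) and two in the odd part (from $q^0\cdot q^3$ and $q\cdot q^0$). These two parts should then match the two summands on the right-hand side of the lemma.

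The main obstacle is that after collection, matching each parity class requires a further non-trivial eta-product identity. For example, the even part reduces to verifying
$$\frac{f_{16} f_{24}^7}{f_8 f_{12}^2 f_{48}^3} + 2q^4\,\frac{f_8^2 f_{12} f_{48}^3}{f_4 f_{16} f_{24}^2} = \frac{f_8^2 f_{12}^4}{f_4^2 f_{24}^2},$$
which is itself essentially a 2-dissection in $q^4$ with comparable bookkeeping burden. To close the argument, I would either reduce this auxiliary identity to a simpler one by dividing through by the right-hand side and identifying the quotient with a known eta quotient, or appeal to the theory of modular forms: both sides of each of the two parity identities are modular forms on a common congruence subgroup, so agreement of finitely many $q$-coefficients up to the Sturm bound forces equality. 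A computer-algebra check of the $q$-series expansion of both sides of the lemma to, say, order $q^{50}$ serves as a useful sanity verification that guards against sign errors introduced in the dissection of $\varphi(-q)$.
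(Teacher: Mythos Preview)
Your primary suggestion---a direct citation to Hirschhorn \cite{Hir17}---is exactly what the paper does; the precise reference is \cite[(30.12.1)]{Hir17} rather than (30.10.2). Your alternative derivation via $f_1/f_3 \cdot f_3^2$ is unnecessary here and, as you note, runs into auxiliary eta-product identities of comparable difficulty.
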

\begin{proof}
See Hirschhorn \cite[(30.12.1)]{Hir17}.  
\end{proof}

\begin{lemma}
\label{lemma:1_over_f1f3} 
We have 
$$\frac{1}{f_1f_3} = \frac{f_8^2f_{12}^5}{f_2^2f_4f_6^4f_{24}^2} +q\frac{f_4^5f_{24}^2}{f_2^4f_6^2f_{8}^2f_{12}}.$$
\end{lemma}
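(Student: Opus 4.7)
The plan is to derive this $2$-dissection of $1/(f_1f_3)$ by rationalizing the $2$-dissection of $f_1f_3$ already established in Lemma \ref{lemma:f1f3}, rather than starting from scratch. Write
$$A(q):=\frac{f_2f_8^2f_{12}^4}{f_4^2f_6f_{24}^2},\qquad B(q):=\frac{f_4^4f_6f_{24}^2}{f_2f_8^2f_{12}^2},$$
so that Lemma \ref{lemma:f1f3} reads $f_1f_3=A-qB$. Crucially, every subscript appearing in $A$ and $B$ is even, so both are invariant under $q\mapsto -q$.

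The key intermediate step is to evaluate $A^2-q^2B^2$ in closed form. Replacing $q$ by $-q$ in Lemma \ref{lemma:f1f3} yields $f_1(-q)f_3(-q)=A+qB$, whence
$$A^2-q^2B^2=(A-qB)(A+qB)=f_1(q)f_1(-q)\cdot f_3(q)f_3(-q).$$
A standard manipulation based on $(-q;q^2)_\infty=f_2^2/(f_1f_4)$ (and its dilate at $q^3$) gives the identities $f_1(q)f_1(-q)=f_2^3/f_4$ and $f_3(q)f_3(-q)=f_6^3/f_{12}$. Combining,
$$A^2-q^2B^2=\frac{f_2^3f_6^3}{f_4f_{12}}.$$

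Now rationalize:
$$\frac{1}{f_1f_3}=\frac{1}{A-qB}=\frac{A+qB}{A^2-q^2B^2}=\frac{f_4f_{12}}{f_2^3f_6^3}\,A+q\,\frac{f_4f_{12}}{f_2^3f_6^3}\,B.$$
It remains only to cancel powers of the $f_n$'s to check
$$\frac{f_4f_{12}}{f_2^3f_6^3}\cdot A=\frac{f_8^2f_{12}^5}{f_2^2f_4f_6^4f_{24}^2},\qquad \frac{f_4f_{12}}{f_2^3f_6^3}\cdot B=\frac{f_4^5f_{24}^2}{f_2^4f_6^2f_8^2f_{12}},$$
which match the two summands of the claimed identity.

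The main obstacle, such as it is, lies in producing the closed-form value of $A^2-q^2B^2$; once that step is in hand, the rest is routine cancellation. If one prefers to skip that insight altogether, the alternative is to verify the identity brute-force by multiplying both sides by $f_1f_3=A-qB$ and showing (i) $AD=BC$ (killing the odd part) and (ii) $AC-q^2BD=1$; step (ii) ultimately reduces back to the same evaluation of $A^2-q^2B^2$, so the two strategies coincide. Of course, following the pattern of Lemmas \ref{lemma:f1_over_f3}--\ref{lemma:f1f3}, a one-line proof citing the corresponding entry of Hirschhorn \cite{Hir17} (Chapter 30) is also available.
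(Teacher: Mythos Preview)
Your derivation is correct: the rationalization trick via $f_1(-q)f_3(-q)=A+qB$ and the evaluation $f_1(q)f_1(-q)=f_2^3/f_4$ (together with its $q\mapsto q^3$ dilate) give exactly $A^2-q^2B^2=f_2^3f_6^3/(f_4f_{12})$, and the remaining cancellations check.

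However, the paper does not prove this lemma at all in the sense you do; it simply cites the identity as \cite[(30.12.3)]{Hir17}, exactly in the spirit of the surrounding Lemmas \ref{lemma:f1_over_f3}--\ref{lemma:f1f3}. Your approach is therefore genuinely different: you bootstrap a self-contained argument from the already-quoted Lemma \ref{lemma:f1f3}, whereas the paper treats both the dissection of $f_1f_3$ and that of $1/(f_1f_3)$ as black boxes imported from Hirschhorn. What your route buys is transparency---it makes visible that the two dissections are formal reciprocals of one another once the ``norm'' $A^2-q^2B^2$ is known---at the modest cost of a few extra lines. What the paper's route buys is brevity and a uniform treatment of the whole block of auxiliary identities. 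You already note this alternative in your final sentence, so you are aware of the discrepancy.
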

\begin{proof}
See Hirschhorn \cite[(30.12.3)]{Hir17}.  
\end{proof}

\begin{lemma}
\label{lemma:1_over_f1squaredf3squared} 
We have 
$$\frac{1}{f_1^2f_3^2} = \frac{f_8^5f_{24}^5}{f_2^5f_6^5f_{16}^2f_{48}^2} +2q\frac{f_4^4f_{12}^4}{f_2^6f_6^6} +4q^4\frac{f_4^2f_{12}^2f_{16}^4f_{48}^2}{f_2^5f_6^5f_8f_{24}}.$$
\end{lemma}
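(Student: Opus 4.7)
The plan is to prove the identity by squaring the 2-dissection from Lemma \ref{lemma:1_over_f1f3}. Writing
$$A := \frac{f_8^2 f_{12}^5}{f_2^2 f_4 f_6^4 f_{24}^2}, \qquad B := \frac{f_4^5 f_{24}^2}{f_2^4 f_6^2 f_8^2 f_{12}},$$
so that Lemma \ref{lemma:1_over_f1f3} reads $\frac{1}{f_1 f_3} = A + qB$, squaring yields
$$\frac{1}{f_1^2 f_3^2} = A^2 + 2qAB + q^2 B^2.$$

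First I would verify by direct cancellation of matched exponents that
$$AB = \frac{f_4^4 f_{12}^4}{f_2^6 f_6^6},$$
which immediately reproduces the middle term $2q \cdot f_4^4 f_{12}^4/(f_2^6 f_6^6)$ of the claimed identity. The work then reduces to establishing
$$A^2 + q^2 B^2 = \frac{f_8^5 f_{24}^5}{f_2^5 f_6^5 f_{16}^2 f_{48}^2} + 4 q^4 \frac{f_4^2 f_{12}^2 f_{16}^4 f_{48}^2}{f_2^5 f_6^5 f_8 f_{24}}.$$
Since only even subscripts appear, both sides are power series in $q^2$; the substitution $Q = q^2$ converts the equality into a pure eta-quotient identity in the variable $Q$. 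The factors $f_{16}$ and $f_{48}$ on the right-hand side correspond to $(Q^8;Q^8)_\infty$ and $(Q^{24};Q^{24})_\infty$ after this substitution, and such factors enter naturally when one applies Lemma \ref{lemma:f3_over_f1} (or Lemma \ref{lemma:f1_over_f3}) in the variable $Q$ to rewrite the ratios of $f_{2k}$'s appearing in $A^2 + q^2 B^2$.

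The main obstacle I anticipate is the exponent bookkeeping required to split the combined expression cleanly into the displayed $q^0$- and $q^4$-pieces; beyond that, no genuinely new ingredient beyond the dissections already collected in Lemmas \ref{lemma:f1_over_f3}--\ref{lemma:1_over_f1f3} should be needed. A mechanical safety net, should the algebra become unwieldy, is to observe that after clearing denominators both sides of the desired identity are eta-quotients of equal weight and level on some $\Gamma_0(N)$ with $N$ a multiple of $48$, so they belong to a common finite-dimensional space of modular forms; equality of sufficiently many initial Fourier coefficients then forces the identity globally. The statement fits squarely within the family of 2-dissections catalogued in Hirschhorn's book \cite{Hir17}, which is where I would look first for a ready-made reference in the style used for the preceding lemmas.
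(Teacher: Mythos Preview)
The paper does not actually prove this lemma: its entire proof is the citation ``See Baruah and Ojah \cite[Lemma~2.2]{BO15}.'' Your proposal, by contrast, outlines a genuine derivation from the earlier dissection lemmas. Your squaring of Lemma~\ref{lemma:1_over_f1f3} is the natural move, and your computation $AB = f_4^4 f_{12}^4/(f_2^6 f_6^6)$ is correct, so the odd part of the 2-dissection matches immediately. What remains, as you note, is the even-part identity $A^2 + q^2 B^2 = \dfrac{f_8^5 f_{24}^5}{f_2^5 f_6^5 f_{16}^2 f_{48}^2} + 4q^4 \dfrac{f_4^2 f_{12}^2 f_{16}^4 f_{48}^2}{f_2^5 f_6^5 f_8 f_{24}}$, which you do not carry out but for which your two suggested routes (a further dissection in $Q=q^2$ via Lemmas~\ref{lemma:f1_over_f3}--\ref{lemma:f3_over_f1}, or a modular-forms finite-check) are both viable. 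One small imprecision: the two sides are \emph{sums} of eta-quotients rather than single eta-quotients, so the modular-forms fallback should be phrased as ``the difference lies in a finite-dimensional space of modular forms,'' not as a comparison of two eta-quotients of equal weight. In summary, your plan is sound and more informative than the paper's bare citation; it is just not carried to completion on the even-part step, which is where the real work lies.
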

\begin{proof}
See Baruah and Ojah \cite[Lemma 2.2]{BO15}.  
\end{proof}

\begin{lemma}
\label{lemma:1_over_f1squared} 
We have 
$$\frac{1}{f_1^2} = \frac{f_8^5}{f_{2}^5f_{16}^2}+2q\frac{f_4^2f_{16}^2}{f_2^5f_{8}}.$$
\end{lemma}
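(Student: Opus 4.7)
The plan is to prove this identity by specializing the $2$-dissection of Ramanujan's theta function $\varphi(q) = \sum_{n\in\mathbb{Z}}q^{n^2}$. Splitting the sum defining $\varphi(q)$ according to the parity of the summation index gives the elementary identity
$$\varphi(q) = \varphi(q^4) + 2q\,\psi(q^8),$$
where $\psi(q)=\sum_{n\geq 0} q^{n(n+1)/2}$. This is the one nontrivial input, and it is completely standard.

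Next, I would express both $\varphi$ and $\psi$ as eta-quotients via Jacobi's triple product, namely
$$\varphi(q) = \frac{f_2^5}{f_1^2 f_4^2}, \qquad \psi(q) = \frac{f_2^2}{f_1}.$$
Substituting these into the $2$-dissection above (with the appropriate $q \mapsto q^4$ and $q \mapsto q^8$ substitutions on the right) yields
$$\frac{f_2^5}{f_1^2 f_4^2} = \frac{f_8^5}{f_4^2 f_{16}^2} + 2q\cdot\frac{f_{16}^2}{f_8}.$$
Multiplying through by $f_4^2/f_2^5$ produces
$$\frac{1}{f_1^2} = \frac{f_8^5}{f_2^5 f_{16}^2} + 2q\,\frac{f_4^2 f_{16}^2}{f_2^5 f_8},$$
which is exactly the claimed identity.

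There is no real obstacle here beyond invoking the correct classical identities; the manipulation is purely algebraic. In fact, the result is entirely classical and appears, for instance, in Hirschhorn's book \cite{Hir17}, so one could simply cite it in keeping with the style of Lemmas \ref{lemma:f1_over_f3}--\ref{lemma:1_over_f1squaredf3squared}. If a self-contained derivation is preferred, the three-line argument above via the $2$-dissection of $\varphi(q)$ suffices.
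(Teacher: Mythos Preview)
Your proposal is correct. The paper's own ``proof'' is nothing more than a citation to Berndt \cite[p.~40]{Ber91}, so your self-contained derivation via the $2$-dissection $\varphi(q)=\varphi(q^4)+2q\,\psi(q^8)$ and the standard eta-quotient representations of $\varphi$ and $\psi$ is in fact more than the paper provides; you have essentially reproduced the classical argument behind the cited reference, and you even anticipated that a bare citation would suffice in context.
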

\begin{proof}
See Berndt \cite[p. 40]{Ber91}.  
\end{proof}

We will also require a few 3--dissection results in our work below.

\begin{lemma}
\label{lemma:f1squared_over_f2}
We have 
$$\frac{f_1^2}{f_2} = \varphi(-q) = \varphi(-q^3) -2q\frac{f_3f_{18}^2}{f_6f_9}$$
where 
$$\varphi(q) = 1 + 2\sum_{k=1}^\infty q^{k^2}$$ 
is one of Ramanujan's famous theta functions.
\begin{proof} 
See Hirschhorn \cite[(1.5.8)]{Hir17} and Andrews, Hirschhorn, and Sellers \cite[Lemma 2.6]{AHS10}.
\end{proof}
\end{lemma}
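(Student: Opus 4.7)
My plan is to derive both equalities by specializing Jacobi's triple product identity
\[
\sum_{n \in \mathbb{Z}} z^{n} q^{n^2} \;=\; \prod_{n \geq 1}(1-q^{2n})(1+zq^{2n-1})(1+z^{-1}q^{2n-1}).
\]
For the first equality, I would set $z=-1$: the right-hand side collapses to $f_2\cdot(q;q^2)_\infty^2$, and using the elementary identity $(q;q^2)_\infty = f_1/f_2$, this simplifies to $f_1^2/f_2$, while the left-hand side is $\varphi(-q)$ by definition.

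For the second equality, I would perform a $3$-dissection of $\varphi(-q) = \sum_{n\in\mathbb{Z}}(-1)^n q^{n^2}$ based on $n \bmod 3$. The terms with $n\equiv 0\pmod 3$ collect into a theta function in $q^9$, supplying the first summand on the right-hand side. The terms with $n\equiv \pm 1\pmod 3$, after writing $n=3m\pm 1$ and invoking the $m\leftrightarrow -m$ symmetry of the resulting sums, combine into $-2q\sum_{m\in\mathbb{Z}}(-1)^m q^{9m^2+6m}$. I would then recognize this bilateral sum via a second application of the triple product, specialized to $z=-q^6$ with base $q^9$ in place of $q$, which produces
\[
\prod_{n\geq 1}(1-q^{18n})(1-q^{18n-3})(1-q^{18n-15}).
\]
Rewriting the three factors via $\prod_{m\equiv 3\pmod 6}(1-q^m)=f_3/f_6$ and $\prod_{m\equiv 9\pmod{18}}(1-q^m)=f_9/f_{18}$, the product collapses to $f_{18}\cdot(f_3/f_6)/(f_9/f_{18}) = f_3 f_{18}^2/(f_6 f_9)$, matching the claimed prefactor.

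The hard part will be the infinite-product bookkeeping in this second step: correctly identifying the residue classes modulo $18$ that appear in the Jacobi expansion and recombining them via the elementary $f_r/f_s$ relations to obtain the precise form of the prefactor. A secondary care-point is reconciling the index on the theta-function term emerging from the $n\equiv 0\pmod 3$ slice with the form displayed in the lemma statement.
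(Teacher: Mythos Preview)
Your approach is correct and more explicit than the paper's, which simply cites Hirschhorn \cite[(1.5.8)]{Hir17} and Andrews--Hirschhorn--Sellers \cite[Lemma~2.6]{AHS10} without reproducing the argument. Your derivation from the Jacobi triple product is exactly the standard way these identities are established in those sources, so the underlying mathematics is the same; you are just unpacking what the citations contain.

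Your ``secondary care-point'' is well-founded: the $n\equiv 0\pmod 3$ slice of $\varphi(-q)=\sum_{n}(-1)^n q^{n^2}$ yields $\sum_{m}(-1)^m q^{9m^2}=\varphi(-q^9)$, not $\varphi(-q^3)$. A quick coefficient check confirms this: $\varphi(-q)$ has zero coefficient at $q^3$, whereas $\varphi(-q^3)-2q\,f_3f_{18}^2/(f_6f_9)$ has coefficient $-2$ there. The displayed lemma should read $\varphi(-q^9)$ in place of $\varphi(-q^3)$; this is a typographical slip in the paper, and your computation recovers the correct exponent. The product manipulation you outline for the $n\equiv\pm1\pmod 3$ terms is accurate and gives precisely $f_3f_{18}^2/(f_6f_9)$.
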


\begin{lemma}
\label{lemma:psi_3_dissection}
We have 
$$\psi(q) = P(q^3) + q\psi(q^9)$$
where 
$$\psi(q) = \sum_{k=0}^\infty q^{k(k+1)/2}$$ 
is one of Ramanujan's famous theta functions and 
$$P(q) = 1+q+q^2+q^5+q^7+q^{12}+q^{15}+ \dots$$
whose exponents are the generalized pentagonal numbers. 
\begin{proof} 
See Hirschhorn and Sellers \cite[p. 274]{HS10}.
\end{proof}
\end{lemma}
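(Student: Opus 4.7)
The plan is to prove the identity by carrying out a $3$-dissection of $\psi(q)=\sum_{k\geq 0}q^{T_k}$, where $T_k=k(k+1)/2$, organized by the residue of $k$ modulo $3$. A direct calculation of $T_k\bmod 3$ on the three residue classes of $k$ shows that $T_k\equiv 1\pmod{3}$ precisely when $k\equiv 1\pmod{3}$, and $T_k\equiv 0\pmod{3}$ otherwise. Consequently, the terms with $k\equiv 1\pmod{3}$ account for all exponents of $\psi(q)$ lying in $3\mathbb{Z}+1$, while the other two residue classes account for the exponents lying in $3\mathbb{Z}$; by uniqueness of the $3$-dissection, these two pieces must match $q\,\psi(q^9)$ and $P(q^3)$, respectively.

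For the $k\equiv 1\pmod{3}$ piece I would invoke the elementary identity $T_{3j+1}=1+9T_j$, which is immediate from the factorization $(3j+1)(3j+2)/2=1+9\cdot j(j+1)/2$. This yields
$$\sum_{j\geq 0}q^{T_{3j+1}}=q\sum_{j\geq 0}q^{9T_j}=q\,\psi(q^9),$$
which is exactly the second summand on the right-hand side of the claim.

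For the remaining piece, substituting $k=3j$ with $j\geq 0$ produces exponents $T_{3j}=3\cdot j(3j+1)/2$, and substituting $k=3j+2$ with $j\geq 0$ followed by the reindexing $m=j+1$ produces exponents $T_{3j+2}=3\cdot m(3m-1)/2$ for $m\geq 1$. Combined, the exponents divided by $3$ sweep out exactly the set $\{\,n(3n-1)/2 : n\in\mathbb{Z}\,\}$ of generalized pentagonal numbers (the $j\geq 0$ family giving the non-positive indices $n=-j$, the $m\geq 1$ family the positive indices $n=m$), so this piece equals $P(q^3)$. The only mildly delicate step, therefore, is verifying that these two reindexings cover the pentagonal enumeration bijectively without overlap, but it is routine once one writes the two formulas $n(3n-1)/2$ for $n\leq 0$ and $n\geq 1$ side by side.
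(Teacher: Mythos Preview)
Your proof is correct: the $3$-dissection by residue of $k$ modulo $3$, together with the identities $T_{3j+1}=1+9T_j$, $T_{3j}=3\cdot j(3j+1)/2$, and $T_{3j+2}=3\cdot (j+1)(3(j+1)-1)/2$, cleanly yields the two pieces $q\,\psi(q^9)$ and $P(q^3)$. The paper does not supply an argument at all---it simply cites Hirschhorn and Sellers \cite{HS10}---so your self-contained derivation is in fact more informative than what appears here; it is also essentially the standard argument one finds in that reference.
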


\begin{lemma}
\label{lemma:daSilva_Sellers_3diss}
We have 
\begin{align*}
\frac{f_2}{f_1^2} 
&= 
\frac{f_6^4f_9^6}{f_3^8f_{18}^3} + 2q\frac{f_6^3f_9^3}{f_3^7} + 4q^2\frac{f_6^2f_{18}^3}{f_3^6}, \textrm{\ \ and}   \\
\frac{f_2^3}{f_1^3} 
&= 
\frac{f_6}{f_3} + 3q\frac{f_6^4f_9^5}{f_3^8f_{18}} + 6q^2\frac{f_6^3f_9^2f_{18}^2}{f_3^7} + 12q^3\frac{f_6^2f_{18}^5}{f_3^6f_9}.
\end{align*}
\end{lemma}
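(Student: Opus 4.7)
The plan is to derive both identities from the 3-dissection of $\varphi(-q)$ in Lemma \ref{lemma:f1squared_over_f2}, together with the 3-dissection of $\psi(q)$ in Lemma \ref{lemma:psi_3_dissection} and routine eta-quotient algebra.

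For the first identity, set $A := \varphi(-q^3) = f_3^2/f_6$ and $B := f_3 f_{18}^2/(f_6 f_9)$, so that $\varphi(-q) = A - 2qB$ with $A$ and $B$ both power series in $q^3$. Since $f_2/f_1^2 = 1/\varphi(-q)$, rationalizing via the factorization $(A - 2qB)(A^2 + 2qAB + 4q^2 B^2) = A^3 - 8q^3 B^3$ gives
$$\frac{f_2}{f_1^2} = \frac{A^2 + 2qAB + 4q^2 B^2}{A^3 - 8q^3 B^3}.$$
The numerator is already arranged as $C_0(q^3) + qC_1(q^3) + q^2 C_2(q^3)$, so the three layers of the 3-dissection can be read off once the denominator $D := A^3 - 8q^3 B^3$ is identified as a single eta quotient in $q^3$. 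Dividing $A^2$, $2AB$, and $4B^2$ by this $D$ then yields the three claimed summands.

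For the second identity, start from the factorization
$$\frac{f_2^3}{f_1^3} = \frac{f_2}{f_1^2} \cdot \psi(q), \qquad \psi(q) = \frac{f_2^2}{f_1},$$
and apply the just-obtained dissection of $f_2/f_1^2$ together with $\psi(q) = P(q^3) + q\psi(q^9)$ from Lemma \ref{lemma:psi_3_dissection}. Multiplying the three-term and two-term dissections and collecting by powers of $q$ yields four summands of degrees $q^0$, $q^1$, $q^2$, $q^3$, each with a coefficient that is a function of $q^3$. After substituting eta-quotient expressions for $P(q)$ and $\psi(q^9)$, each coefficient should simplify to the corresponding eta quotient on the right-hand side of the claim. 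That $q^0$ and $q^3$ layers appear separately (rather than being combined into one $n \equiv 0 \pmod 3$ summand) is an artefact of the two $\psi$-layers not merging cleanly with the $\varphi$-layers as eta quotients.

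The principal obstacle is the eta-product identity showing that $A^3 - 8q^3 B^3$ equals a single eta quotient, i.e., that the difference between the cube of one eta quotient and $q^3$ times the cube of another collapses to a third eta quotient with no correction terms. This is a genuine modular-forms identity rather than a formal manipulation; I would establish it either directly using Jacobi's triple product and matching theta coefficients, or more pragmatically by recognising both sides as weight-$2$ modular forms on $\Gamma_0(N)$ with $N$ dividing $72$ and verifying agreement to sufficiently many initial coefficients. Once this step is in hand, the remaining simplifications are routine eta-quotient algebra, and the analogous eta identities needed in the second part follow by the same method.
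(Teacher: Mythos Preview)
The paper does not prove this lemma; it merely cites da~Silva and Sellers, so there is no argument to compare against. Your outline for the first identity is the standard rationalisation trick and is sound, but two points need fixing. First, your $A$ is wrong: Lemma~\ref{lemma:f1squared_over_f2} as stated in the paper contains a typo, and the correct dissection is $\varphi(-q)=\varphi(-q^9)-2qB$, so $A=\varphi(-q^9)=f_9^2/f_{18}$, not $f_3^2/f_6$. With your $A$ the three quotients $A^2/D$, $2AB/D$, $4B^2/D$ do not match the stated right-hand side. Second, the ``principal obstacle'' is not one: since $A,B$ are series in $q^3$, one has $D=A^3-8q^3B^3=\varphi(-q)\varphi(-\omega q)\varphi(-\omega^2 q)$ for a primitive cube root $\omega$, and the infinite product $\varphi(-q)=\prod_n(1-q^n)/(1+q^n)$ gives $D=\varphi(-q^3)^4/\varphi(-q^9)=f_3^8f_{18}/(f_6^4f_9^2)$ by elementary regrouping. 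No modular-forms argument is needed.

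For the second identity your plan breaks down. Writing $f_2/f_1^2=X_0+qX_1+q^2X_2$ and $\psi(q)=Y_0+qY_1$, the product has $q^3$-coefficient $X_2Y_1=4\,\dfrac{f_6^2f_{18}^3}{f_3^6}\cdot\dfrac{f_{18}^2}{f_9}=4\,\dfrac{f_6^2f_{18}^5}{f_3^6f_9}$, whereas the claimed $q^3$-term is $12\,\dfrac{f_6^2f_{18}^5}{f_3^6f_9}$. So the four terms on the right-hand side are \emph{not} the four products $X_0Y_0$, $X_0Y_1+X_1Y_0$, $X_1Y_1+X_2Y_0$, $X_2Y_1$ individually; the particular splitting in da~Silva--Sellers comes from a different decomposition (for instance, cubing a two-term dissection of $(-q;q)_\infty$, which naturally produces four binomial terms with coefficients $1,3,3\cdot2,4\cdot3$ after simplification). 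Your product does give the correct $3$-dissection once the $q^0$ and $q^3$ pieces are combined, but recovering the specific four-term form stated here requires further identities that you have not supplied and that do not follow from ``routine eta-quotient algebra''.
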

\begin{proof}
See da Silva and Sellers \cite[Lemma 2]{dSS20}.
\end{proof}

\begin{lemma}
\label{lemma:AHS10_3diss}
We have 
$$
\frac{f_4}{f_1} = \frac{f_{12}f_{18}^4}{f_3^3f_{36}^2} + q\frac{f_6^2f_9^3f_{36}}{f_3^4f_{18}^2} + 2q^2\frac{f_6f_{18}f_{36}}{f_3^3}
$$
\end{lemma}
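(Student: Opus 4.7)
The plan is to combine two 3-dissections already available in the excerpt. The starting point is the identity
$$\frac{f_4}{f_1} \;=\; \frac{f_2^2/f_1}{f_2^2/f_4} \;=\; \frac{\psi(q)}{\varphi(-q^2)},$$
using the classical factorizations $\psi(q)=f_2^2/f_1$ and $\varphi(-q^2)=f_2^2/f_4$. This reduces the problem to 3-dissecting a ratio of two theta functions, each of which has a known partial 3-dissection.

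For the numerator, Lemma \ref{lemma:psi_3_dissection} directly supplies $\psi(q) = P(q^3) + q\,\psi(q^9)$, a 3-dissection whose $q^2 \pmod{3}$ component vanishes because no triangular number is congruent to $2\pmod 3$. For the denominator, applying Lemma \ref{lemma:f1squared_over_f2} with $q\mapsto q^2$ yields
$$\varphi(-q^2) \;=\; \varphi(-q^6) - 2q^2\,\frac{f_6 f_{36}^2}{f_{12} f_{18}} \;=\; A + q^2 B,$$
where $A = f_6^2/f_{12}$ and $B = -2 f_6 f_{36}^2/(f_{12} f_{18})$ are both series in $q^6$; this is a 3-dissection whose $q^1 \pmod 3$ part vanishes. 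I would then invert the denominator by rationalizing: using $(A + q^2 B)(A^2 - q^2 AB + q^4 B^2) = A^3 + q^6 B^3$, one gets
$$\frac{1}{\varphi(-q^2)} \;=\; \frac{A^2}{D} \;+\; q\cdot\frac{q^3 B^2}{D} \;+\; q^2\cdot\frac{-AB}{D}, \qquad D := A^3 + q^6 B^3,$$
an explicit 3-dissection (note that $D$ is a series in $q^6$, so each displayed coefficient is genuinely a series in $q^3$). Multiplying this by the 3-dissection of $\psi(q)$ and collecting by residue class $\pmod 3$ yields three candidate theta-quotients for the three components of $f_4/f_1$, which should then be matched with the three terms on the right-hand side of the lemma.

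The main obstacle will be the final simplification. After the symbolic multiplication, each of the three coefficients is a rather intricate rational expression in $f_3, f_6, f_9, f_{12}, f_{18}, f_{36}$, and collapsing it to the compact form stated in the lemma requires a product evaluation of $D = A^3 + q^6 B^3$ as a single eta-quotient, together with further classical theta identities---for instance, identities among Ramanujan's cubic theta functions $a(q), b(q), c(q)$, which satisfy $a^3 = b^3 + c^3$. Since the identity in question is attributed to Andrews--Hirschhorn--Sellers, an alternative is to follow their derivation directly. A further option is a mechanical verification: both sides are eta-quotients, hence modular forms on a congruence subgroup of $\mathrm{SL}_2(\mathbb{Z})$, so equality follows from agreement of Fourier coefficients up to a Sturm bound.
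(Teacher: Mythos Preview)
The paper does not actually prove this lemma: its entire ``proof'' is a citation to Andrews, Hirschhorn, and Sellers \cite[Theorem 3.1]{AHS10}. So your proposal is considerably more ambitious than what the paper does, and in fact one of your listed fallbacks---deferring to the Andrews--Hirschhorn--Sellers derivation---\emph{is} the paper's approach.

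Regarding your direct argument: the setup is sound. The identification $f_4/f_1 = \psi(q)/\varphi(-q^2)$ is correct, the two 3-dissections you invoke from Lemmas~\ref{lemma:psi_3_dissection} and~\ref{lemma:f1squared_over_f2} are exactly the right inputs, and the cubic rationalization $(A+q^2B)(A^2 - q^2AB + q^4B^2) = A^3 + q^6B^3$ is the standard device for inverting a two-term 3-dissection. However, you yourself flag the genuine gap: the proposal stops before carrying out the simplification of $D = A^3 + q^6 B^3$ and of the resulting three components into the stated eta-quotients. That step is where the real content lies, and it does require further identities (e.g.\ product forms for $\varphi(-q^2)^3 - 8q^2\psi(q^6)^3$ or equivalent cubic theta relations) that are neither stated nor proved in the paper. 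As written, then, your proposal is an outline rather than a proof; it would need either those additional identities made explicit, or the Sturm-bound verification you mention, to be complete. Given that the paper is content to cite \cite{AHS10}, either completing the computation or simply citing the source would be acceptable here.
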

\begin{proof}
See Andrews, Hirschhorn, and Sellers \cite[Theorem 3.1]{AHS10}.  
\end{proof}

Next, define the functions $a(q)$, $b(q)$, and $c(q)$ by 
\begin{align*}
a(q) &:= \sum_{m,n = -\infty}^\infty q^{m^2+mn+n^2},  \\
b(q) &:=\sum_{m,n = -\infty}^\infty \omega^{m-n} q^{m^2+mn+n^2}, \textrm{\ \ \ and} \\
c(q) &:= q^{1/3}\sum_{m,n = -\infty}^\infty q^{m^2+mn+n^2+m+n}
\end{align*}
where $\omega$ is a cube root of unity other than 1.  These functions were introduced by Borwein, Borwein, and Garvan \cite{BBG94}.  

Several properties of the function $b(q)$ are known.  

\begin{lemma}
\label{bq_prod_representation}
We have 
$$b(q) = \frac{f_1^3}{f_3}.$$ 
\end{lemma}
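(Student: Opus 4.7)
The plan is to reduce the double theta series $b(q)$ to an infinite product via Jacobi's triple product. First I would set $r = m - n$ and sum over $n$ inside. Since $m^2+mn+n^2 = r^2 + 3nr + 3n^2$, this gives
$$b(q) = \sum_{r \in \mathbb{Z}} \omega^r q^{r^2} \sum_{n \in \mathbb{Z}} q^{3n^2 + 3nr}.$$
Splitting the outer sum by the parity of $r$ and completing the square in the inner sum (shifting $n$ by an integer in both the even and the odd case) decouples the double series into two products of one-variable theta sums, namely
$$b(q) = \varphi(q^3)\sum_{k\in\mathbb{Z}}\omega^{k}q^{k^2} + 2\omega q\,\psi(q^6)\sum_{k\in\mathbb{Z}}\omega^{-k}q^{k(k+1)}.$$

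The second step is to evaluate each of the two remaining theta sums in product form by applying Jacobi's triple product together with the factorization
$$(1+\omega x)(1+\omega^2 x) = 1 - x + x^2 = \frac{1+x^3}{1+x},$$
which turns the $\omega$-twisted products into ordinary eta quotients. After $\omega^3 = 1$ is used to cancel the complex prefactors, the two contributions combine into the real $2$-dissected expression
$$b(q) = \frac{f_1 f_4 f_6^7}{f_2 f_3^3 f_{12}^3} - 2q\,\frac{f_2^2 f_{12}^3}{f_4 f_6^2}.$$

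The final step, and the main obstacle, is to prove that this $2$-dissected eta quotient actually equals $f_1^3/f_3$. Both sides are weight-$1$ modular forms on $\Gamma_0(12)$, so this is genuinely a modular-form identity rather than a purely formal simplification; indeed it is precisely one of the cubic theta-function identities of Borwein, Borwein, and Garvan. A self-contained verification would require a routine but lengthy $q$-series manipulation (for instance, a $2$-dissection of $f_1^3/f_3$ in the spirit of the Hirschhorn identities listed earlier in the paper), but the cleanest route is to cite the result directly from \cite{BBG94}.
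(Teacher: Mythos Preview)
The paper does not actually prove this lemma; it simply records it as a known identity and cites Hirschhorn \cite[(22.1.6)]{Hir17}. Your proposal is therefore more ambitious than the paper's own treatment, and your first two steps are carried out correctly: the substitution $r=m-n$, the parity split, and the Jacobi triple product evaluation with the factorization $(1+\omega x)(1+\omega^2 x)=(1+x^3)/(1+x)$ do produce
\[
b(q)=\frac{f_1 f_4 f_6^7}{f_2 f_3^3 f_{12}^3}-2q\,\frac{f_2^2 f_{12}^3}{f_4 f_6^2}.
\]
(One terminological quibble: this is not a $2$-dissection, since the first summand still contains $f_1$ and $f_3$ and hence mixes even and odd powers of $q$.)

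The genuine gap is the final step. You correctly diagnose that equating the displayed expression with $f_1^3/f_3$ is itself a nontrivial modular identity, but your proposed fix --- ``cite the result directly from \cite{BBG94}'' --- is circular. The product representation $b(q)=f_1^3/f_3$ is exactly what \cite{BBG94} establishes; there is no separate identity in that paper of the form ``(your two-term eta quotient) $=f_1^3/f_3$'' that you could invoke independently. So after all the manipulation you are back to citing the very statement you set out to prove. If you want a self-contained argument along these lines, you must actually carry out the ``routine but lengthy'' verification you allude to (for instance, $2$-dissect $f_1^3/f_3$ directly and match the pieces), rather than deferring it. As written, the proposal replaces one citation with the same citation plus extra computation.
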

\begin{proof}
See Hirschhorn \cite[(22.1.6)]{Hir17}.  
\end{proof}

\begin{lemma}
\label{bq_sum_representation}
We have 
$$b(q) = 1 - 3\sum_{n=-\infty}^\infty \frac{q^{3n+1}\left( 1-2q^{6n+2}\right)}{1-q^{9n+3}}.$$
\end{lemma}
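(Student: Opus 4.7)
The plan is to derive the identity by combining the double-sum definition of $b(q)$ with a theta-to-Lambert-series transformation. First I would exploit the symmetry of the quadratic form $m^2 + mn + n^2$ under $(m, n) \mapsto (n, m)$: since swapping sends $\omega^{m-n}$ to $\omega^{n-m} = \overline{\omega^{m-n}}$ and $1 + \omega + \omega^2 = 0$, grouping by the residue class of $m - n$ modulo $3$ gives
$$b(q) = S_0 - S_1, \qquad S_j := \sum_{m - n \equiv j \!\!\pmod 3} q^{m^2+mn+n^2}.$$

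Next I would reorganize each $S_j$ by fixing $k = m - n$ and summing over $m$. Completing the square in the exponent $m^2 + m(m-k) + (m-k)^2 = 3m^2 - 3mk + k^2$ produces a one-variable Jacobi theta series in the inner sum, which converts into a rational, Lambert-type expression via Ramanujan's bilateral ${}_1\psi_1$ summation (or, equivalently, the Jacobi triple product followed by a geometric-series expansion). Summing the resulting one-variable identities over $k \equiv 0 \pmod 3$ for $S_0$ and over $k \equiv 1 \pmod 3$ for $S_1$ and then subtracting should produce a single bilateral Lambert series whose summand factors naturally in the form $q^{3n+1}(1 - 2q^{6n+2})/(1 - q^{9n+3})$, with the factor $(1 - 2q^{6n+2})$ reflecting precisely the $S_0 - S_1$ cancellation pattern.

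The main obstacle will be the bilateral nature of the sum. As a formal power series in $q$, each negative-index term must be interpreted via $\frac{1}{1 - q^{-N}} = -\frac{q^N}{1 - q^N}$ for $N > 0$; the $n = -k$ contribution then becomes $(2 - q^{6k-2})/(1 - q^{9k-3})$, which carries a nonzero constant term. These infinitely many constant contributions from negative-index terms must be reconciled with the leading $1$ on the right through a careful regularization or symmetric partial-sum argument — essentially, the ${}_1\psi_1$ summation provides exactly the right residue at the pole corresponding to $k = 0$ to produce the $+1$. As an alternative to carrying out this bookkeeping from scratch, the identity is classical and the proof can be taken from Borwein, Borwein, and Garvan~\cite{BBG94}, where it appears as part of their systematic development of cubic theta function identities; this is probably what the present authors will invoke.
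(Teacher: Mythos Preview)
The paper's ``proof'' is simply a citation to Hirschhorn's book \cite[(27.1.9)]{Hir17}; no argument is given. Your closing suggestion --- that the identity is classical and should be cited rather than rederived --- is therefore exactly what the authors do, though they point to Hirschhorn rather than to \cite{BBG94} (and I am not certain the identity appears in \cite{BBG94} in precisely this Lambert-series form).

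Your sketched direct argument is a reasonable outline but not yet a proof. The decomposition $b(q) = S_0 - S_1$ is correct, and the idea of fixing $k = m-n$ and summing the inner Gaussian is sound. However, the step ``converts into a rational, Lambert-type expression via ${}_1\psi_1$'' is doing all the work and is left completely unspecified; in practice one usually reaches this Lambert form not through ${}_1\psi_1$ on the theta side but by first passing through the representation-count interpretation (divisors of $n$ in residue classes mod~$3$) or through the cubic analogue of the two-squares Lambert series. More seriously, the bilateral convergence issue you flag is genuine and is not resolved by the remark that ``${}_1\psi_1$ provides exactly the right residue'': as written, the negative-$n$ terms of the stated series each contribute a nonzero constant, so the right-hand side is not even a well-defined formal power series without a symmetric-limit convention, and you would need to make that convention explicit and verify it matches whatever convention underlies the ${}_1\psi_1$ step. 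Absent that bookkeeping, the sketch does not close.
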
  
\begin{proof}
See Hirschhorn \cite[(27.1.9)]{Hir17}.  
\end{proof}
 
\begin{lemma}
\label{bq_2diss}
We have 
$$b(q) = b(q^4) -3q\psi(q^6)\left( \psi(q^2) - 3q^2\psi(q^{18}) \right).$$
\end{lemma}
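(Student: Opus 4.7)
The plan is to verify the claimed $2$-dissection of $b(q)$ by analyzing its even and odd parts separately, using the series representation
$$b(q) = \sum_{m,n \in \mathbb{Z}} \omega^{m-n}\, q^{m^2+mn+n^2}.$$

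First, for the even part, a short parity case-check shows that $m^2+mn+n^2 \pmod 4$ always lies in $\{0,1,3\}$ (so $b(q)$ has no term $q^{4k+2}$), and the exponent is even exactly when both $m$ and $n$ are even. Substituting $m=2m'$, $n=2n'$, the exponent becomes $4(m'^2+m'n'+n'^2)$ and the coefficient becomes $\omega^{2(m'-n')} = \omega^{-(m'-n')} = \omega^{n'-m'}$. Since the quadratic form is symmetric in its two variables, relabeling $(m',n') \leftrightarrow (n',m')$ shows the resulting sum equals $b(q^4)$. Thus the even part of $b(q)$ is $b(q^4)$, which accounts for the first term of the claimed identity and implies that the second term on the right-hand side must be purely odd-exponent.

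For the odd part, one must identify $b(q) - b(q^4)$ with $-3q\psi(q^6)(\psi(q^2) - 3q^2\psi(q^{18}))$. Translating to eta-quotients via $\psi(q) = f_2^2/f_1$ and $b(q) = f_1^3/f_3$, this is the algebraic identity
$$\frac{f_1^3}{f_3} - \frac{f_4^3}{f_{12}} = -3q\,\frac{f_4^2 f_{12}^2}{f_2 f_6} + 9q^3\,\frac{f_{12}^2 f_{36}^2}{f_6 f_{18}}.$$
The natural route is to start from the Lambert-series representation in Lemma~\ref{bq_sum_representation},
$$b(q) = 1 - 3\sum_{n \in \mathbb{Z}} \frac{q^{3n+1}(1-2q^{6n+2})}{1-q^{9n+3}},$$
split the outer sum according to the parity of $n$, and reassemble the odd-exponent terms into a double sum. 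Applying the 3-dissection $\psi(q^2) = P(q^6) + q^2\psi(q^{18})$ from Lemma~\ref{lemma:psi_3_dissection} recasts the claimed right-hand side as $-3q\psi(q^6)P(q^6) + 6q^3\psi(q^6)\psi(q^{18})$, cleanly separating the $q^{6j+1}$-part from the $q^{6j+3}$-part. Each class can then be matched against the corresponding residue class in the Lambert series, with the factor $\psi(q^6)$ appearing from one summation index and the residual $P$- or $\psi$-factor from the other, via the Jacobi triple product.

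The main obstacle is this final reorganization: the Lambert-series to theta-product rearrangement requires careful sign bookkeeping for the $n<0$ contributions and non-trivial re-indexing to exhibit the common factor $\psi(q^6)$ explicitly. If the elementary manipulation proves too delicate, a robust fallback is to recall that both sides of the identity are (after multiplication by $f_3$) modular forms of the same weight and character on a congruence subgroup of level dividing $72$, so Sturm's bound reduces the identity to a finite initial $q$-coefficient check.
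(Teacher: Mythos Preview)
The paper does not actually prove this lemma: its entire proof is the one-line citation ``See Hirschhorn \cite[(22.6.10)]{Hir17}.''  So there is no argument in the paper to compare your approach against.

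Your treatment of the even part is correct and complete: the parity analysis of $m^2+mn+n^2$ is right, the substitution $m=2m'$, $n=2n'$ and the relabeling $(m',n')\leftrightarrow(n',m')$ do recover $b(q^4)$ exactly, and the observation that no exponent is $\equiv 2\pmod 4$ is a nice bonus.

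The odd part, however, is only a sketch.  You correctly translate the target into an eta-quotient identity and outline a two-pronged attack (Lambert series split by parity of $n$, then a mod-$6$ separation via $\psi(q^2)=P(q^6)+q^2\psi(q^{18})$), but you explicitly stop short of carrying out the ``delicate'' re-indexing that would exhibit the common factor $\psi(q^6)$.  As written this is a plan, not a proof.  Your Sturm-bound fallback is legitimate in principle---both sides are weight-$1$ forms on a congruence subgroup of level dividing $36$ (not $72$; the largest argument appearing is $q^{36}$)---but to make it a proof you would still need to name the group and character, compute the bound, and report that the required number of coefficients match.  Either route can be completed, but neither is completed here.
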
  
\begin{proof}
See Hirschhorn \cite[(22.6.10)]{Hir17}.  
\end{proof}
\begin{lemma}
\label{bq_aqcubed_and_cqcubed}
We have 
$$b(q) = a(q^3) - 3q\frac{f_9^3}{f_3}.$$
\end{lemma}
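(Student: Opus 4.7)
The plan is to dissect the double sum defining $b(q)$ according to the residue class of $m-n$ modulo $3$, and then to identify each of the three resulting pieces with a known theta function. For $j\in\{0,1,2\}$, set
$$A_j(q):=\sum_{\substack{m,n\in\mathbb{Z}\\ m-n\equiv j\pmod{3}}}q^{m^2+mn+n^2},$$
so that $a(q)=A_0+A_1+A_2$ and $b(q)=A_0+\omega A_1+\omega^2 A_2$. Swapping $m\leftrightarrow n$ exchanges the conditions $m-n\equiv 1\pmod{3}$ and $m-n\equiv 2\pmod{3}$ without changing the summand, so $A_1=A_2$; combined with $\omega+\omega^2=-1$, this collapses the target to
$$b(q)=A_0-A_1.$$

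The piece $A_0$ is immediate: writing $m=n+3k$ gives $m^2+mn+n^2=3(n^2+3nk+3k^2)$, and the $\mathbb{Z}^2$-bijection $(u,v)=(n+k,k)$ converts the bracketed form into $u^2+uv+v^2$, so $A_0(q)=a(q^3)$. The piece $A_1$ is the main computational step. Writing $m=n+3k+1$ and expanding yields
$$m^2+mn+n^2=3n^2+3n(3k+1)+(3k+1)^2=3\bigl(n^2+3nk+n+3k^2+2k\bigr)+1.$$
Now apply the $\mathbb{Z}^2$-bijection $(P,Q)=(n+k,k)$; a routine calculation shows that the bracketed expression becomes exactly $P^2+PQ+Q^2+P+Q$, and therefore
$$A_1(q)=q\sum_{P,Q\in\mathbb{Z}}q^{3(P^2+PQ+Q^2+P+Q)}=c(q^3),$$
directly from the definition of $c$ given in the excerpt.

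To finish, invoke the classical product representation $c(q)=3q^{1/3}f_3^3/f_1$ (see, e.g., Hirschhorn \cite{Hir17}), which implies $c(q^3)=3q\,f_9^3/f_3$. Assembling the pieces,
$$b(q)=A_0(q)-A_1(q)=a(q^3)-3q\,\frac{f_9^3}{f_3}.$$
The only place requiring any alertness is pinpointing the change of variables $(P,Q)=(n+k,k)$ in the $A_1$ step so that the shifted cubic form $P^2+PQ+Q^2+P+Q$ becomes manifest inside the expanded exponent; everything else is bookkeeping that exploits the lattice symmetries of $m^2+mn+n^2$ and the identification of the $m-n\equiv 0$ piece with $a(q^3)$.
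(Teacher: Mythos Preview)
Your proof is correct. The paper itself does not prove this lemma at all; it merely cites Hirschhorn \cite[(22.1.5) and (22.1.7)]{Hir17}, which record the identities $b(q)=a(q^3)-c(q^3)$ and $c(q)=3f_3^3/f_1$. What you have written is exactly the standard derivation underlying those two facts: the $3$-dissection of the lattice sum for $b(q)$ by the residue of $m-n$ modulo $3$, the symmetry $A_1=A_2$, the identifications $A_0=a(q^3)$ and $A_1=c(q^3)$ via the unimodular change of variables $(P,Q)=(n+k,k)$, and finally the Borwein--Borwein--Garvan product for $c(q)$. So your argument is not a different route so much as a self-contained unpacking of the citation; the benefit is that your version makes the lemma verifiable without consulting the reference.
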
  
\begin{proof}
See Hirschhorn \cite[(22.1.5) and (22.1.7)]{Hir17}.  
\end{proof}

We are now in a position to find the 2--dissection of the generating function for $a(n)$.  

\begin{theorem}
\label{genfn_a2n}
We have 
\begin{align}
\sum_{n=0}^\infty a(2n)q^n 
&= 
\frac{f_8f_{12}^2}{f_1f_3f_4f_{24}}, \label{a2n0-dissection} \textrm{\ \ and} \\
\sum_{n=0}^\infty a(2n+1)q^n 
&= 
-\frac{f_4^2f_6f_{24}}{f_1f_2f_3f_8f_{12}}.\label{a2n1-dissection} 
\end{align}
\end{theorem}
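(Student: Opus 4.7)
My plan is to exploit the factorization
$$A(q) = \frac{f_1 f_6}{f_2^2 f_3} = \frac{f_6}{f_2^2}\cdot\frac{f_1}{f_3}.$$
The key observation is that the factor $\frac{f_6}{f_2^2}$ is a power series in $q^2$, since both $f_2=\prod(1-q^{2k})$ and $f_6=\prod(1-q^{6k})$ involve only even exponents. Consequently, the 2-dissection of $A(q)$ is produced by multiplying $\frac{f_6}{f_2^2}$ term-by-term into the 2-dissection of $\frac{f_1}{f_3}$, which is exactly the content of Lemma \ref{lemma:f1_over_f3}.

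Carrying this out, I would substitute Lemma \ref{lemma:f1_over_f3} and simplify:
$$A(q) = \frac{f_6}{f_2^2}\left(\frac{f_2 f_{16} f_{24}^2}{f_6^2 f_8 f_{48}} - q\frac{f_2 f_8^2 f_{12} f_{48}}{f_4 f_6^2 f_{16} f_{24}}\right) = \frac{f_{16} f_{24}^2}{f_2 f_6 f_8 f_{48}} - q\frac{f_8^2 f_{12} f_{48}}{f_2 f_4 f_6 f_{16} f_{24}}.$$
Every $f_r$ appearing in the first summand has even index, so that summand is a power series in $q^2$; the second summand shares this property once the explicit $-q$ factor is pulled out. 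Therefore, these two pieces contain respectively the even-indexed and odd-indexed coefficients of $A(q)$.

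To conclude, I would equate coefficients: the first summand gives
$$\sum_{n\geq 0} a(2n)q^{2n} = \frac{f_{16} f_{24}^2}{f_2 f_6 f_8 f_{48}},$$
while the second gives
$$\sum_{n\geq 0} a(2n+1)q^{2n+1} = -q\frac{f_8^2 f_{12} f_{48}}{f_2 f_4 f_6 f_{16} f_{24}}.$$
Cancelling the factor of $q$ in the odd part and then performing the substitution $q^2 \mapsto q$ (equivalently, $f_{2r}\mapsto f_r$) yields formulas \eqref{a2n0-dissection} and \eqref{a2n1-dissection} at once.

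There is no serious obstacle here beyond careful bookkeeping of subscripts during the cancellation step; the entire argument is a direct application of Lemma \ref{lemma:f1_over_f3}, with the clean separation enabled by the fact that $\frac{f_6}{f_2^2}$ is inert under the dissection. If one preferred to avoid relying on Hirschhorn's identity, an alternative route would be to multiply numerator and denominator by $f_3/f_1$ to write $A(q) = \frac{f_1^2 f_6}{f_2^2 f_3}\cdot\frac{1}{f_1 f_3/f_3^2}$ and then invoke Lemma \ref{lemma:1_over_f1f3}; however, the factorization above is markedly more efficient.
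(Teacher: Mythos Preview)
Your proof is correct and follows precisely the same route as the paper: factor $A(q)=\dfrac{f_6}{f_2^2}\cdot\dfrac{f_1}{f_3}$, apply Lemma~\ref{lemma:f1_over_f3} to dissect $f_1/f_3$, simplify, and then replace $q^2$ by $q$. The only difference is cosmetic presentation; your side remark about an alternative via Lemma~\ref{lemma:1_over_f1f3} is a bit garbled algebraically, but it plays no role in the main argument.
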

\begin{proof}
From \eqref{genfn_an} and Lemma \ref{lemma:f1_over_f3}, we know 
\begin{align*}
\sum_{n=0}^\infty a(2n)q^{2n} 
&= 
\frac{f_6}{f_2^2}\left( \frac{f_2f_{16}f_{24}^2}{f_6^2f_8f_{48}}  \right) \\
&= 
\frac{f_{16}f_{24}^2}{f_2f_6f_8f_{48}}.
\end{align*}
Replacing $q^2$ by $q$ throughout yields \eqref{a2n0-dissection}.  Again thanks to \eqref{genfn_an} and Lemma \ref{lemma:f1_over_f3}, we know 
\begin{align*}
\sum_{n=0}^\infty a(2n+1)q^{2n+1} 
&= 
\frac{f_6}{f_2^2}\left(  -q\frac{f_2f_8^2f_{12}f_{48}}{f_4f_6^2f_{16}f_{24}} \right)  \\
&= 
-q \frac{f_8^2f_{12}f_{48}}{f_2f_4f_6f_{16}f_{24}}.
\end{align*}
Dividing both sides by $q$, and then replacing $q^2$ by $q$ throughout yields \eqref{a2n1-dissection}.
\end{proof}

With the above in hand, we can now state a characterization modulo 2 satisfied by $a(2n)$ for all $n\geq 0$.

\begin{theorem}
\label{a2n_parity_characterization}
We have 
$$
a(2n)\equiv \begin{cases}
			1 \pmod{2}, & \text{if $n=0$ or $n$ is a square not divisible by $3$}\\
            0 \pmod{2}, & \text{otherwise}.
		 \end{cases}
$$
\end{theorem}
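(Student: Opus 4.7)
The plan is to reduce everything modulo 2 in three stages, with the main work being a theta-function identification at the end.

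First, I would start from \eqref{a2n0-dissection} in Theorem \ref{genfn_a2n}, namely $\sum_{n\geq 0} a(2n)q^n = f_8 f_{12}^2/(f_1 f_3 f_4 f_{24})$. Using the elementary congruence $f_{2k}\equiv f_k^2\pmod 2$ (which follows from $(1-q^{km})^2\equiv 1-q^{2km}\pmod 2$), I would substitute $f_{12}^2\equiv f_{24}$, $f_8\equiv f_4^2$, and $f_4\equiv f_1^4$; after cancellation the right-hand side collapses mod 2 to $f_1^3/f_3=b(q)$. So the task reduces to proving
$$b(q)\equiv 1+\sum_{k\geq 1,\,3\nmid k} q^{k^2}\pmod 2.$$

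Next, I would invoke Lemma \ref{bq_aqcubed_and_cqcubed}: $b(q)=a(q^3)-3q\,f_9^3/f_3$. Because the unit group $\{\pm 1,\pm\omega,\pm\omega^2\}$ of the Eisenstein integers acts freely on nonzero elements, every nonconstant coefficient of the theta series $a(q)=\sum_{m,n\in\mathbb{Z}} q^{m^2+mn+n^2}$ is a multiple of 6; hence $a(q)\equiv 1\pmod 2$, and in particular $a(q^3)\equiv 1\pmod 2$. Combined with $-3\equiv 1\pmod 2$, this gives
$$b(q)\equiv 1+q\,\frac{f_9^3}{f_3}\pmod 2.$$

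The main obstacle is the last step: identifying $q f_9^3/f_3$ modulo 2 with $\sum_{k\geq 1,\,3\nmid k} q^{k^2}$. My approach is to note that the squares $k^2$ with $3\nmid k$ are precisely the values $(3n-1)^2$ as $n$ ranges over $\mathbb{Z}$, with each appearing once, so
$$\sum_{k\geq 1,\,3\nmid k} q^{k^2}=\sum_{n\in\mathbb{Z}} q^{(3n-1)^2}=q\sum_{n\in\mathbb{Z}} q^{9n^2-6n}=q\,f(q^3,q^{15}),$$
where $f(a,b):=\sum_{n\in\mathbb{Z}} a^{n(n+1)/2}b^{n(n-1)/2}$ is Ramanujan's general theta function. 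The Jacobi triple product gives $f(q^3,q^{15})=(-q^3;q^{18})_\infty(-q^{15};q^{18})_\infty f_{18}$, and reducing modulo 2 (using $1+q^r\equiv 1-q^r$) together with the identity $(q^3;q^{18})_\infty(q^9;q^{18})_\infty(q^{15};q^{18})_\infty=(q^3;q^6)_\infty$ yields
$$f(q^3,q^{15})\equiv\frac{(q^3;q^6)_\infty}{(q^9;q^{18})_\infty}\,f_{18}=\frac{f_3 f_{18}^2}{f_6 f_9}\equiv\frac{f_9^3}{f_3}\pmod 2,$$
where the last step uses $f_6\equiv f_3^2$ and $f_{18}\equiv f_9^2$. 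Multiplying by $q$ and combining with the second step gives $b(q)\equiv 1+\sum_{k\geq 1,\,3\nmid k}q^{k^2}\pmod 2$, which is exactly the claim.
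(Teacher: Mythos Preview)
Your proof is correct, and the first stage (reducing $\sum a(2n)q^n$ modulo 2 to $b(q)=f_1^3/f_3$) coincides with the paper's. From that point on the two arguments diverge. The paper applies the Lambert-type representation of $b(q)$ in Lemma~\ref{bq_sum_representation}, reduces it mod 2 to $1+\sum_{n\in\mathbb{Z}} q^{3n+1}/(1-q^{9n+3})$, and then expands this bilateral sum as a double sum over $(m,n)$ which it rewrites as $\sum q^{(3n+1)^2}-\sum q^{(3n+2)^2}$ plus explicitly even cross-terms. You instead go through Lemma~\ref{bq_aqcubed_and_cqcubed}, use the free action of the Eisenstein unit group to get $a(q^3)\equiv 1\pmod 2$, and then identify $q\,f_9^3/f_3$ with $\sum_{3\nmid k}q^{k^2}$ via the Jacobi triple product for $f(q^3,q^{15})$. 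Your route is more structural (it actually yields the stronger fact $a(q)\equiv 1\pmod 6$ along the way), while the paper's is a self-contained geometric-series manipulation that avoids appealing to the triple product; both are of comparable length.
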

\begin{proof}
From Theorem \ref{genfn_a2n}, we know 
\begin{align*}
\sum_{n=0}^\infty a(2n)q^n 
&= 
\frac{f_8f_{12}^2}{f_1f_3f_4f_{24}} \\
&\equiv 
\frac{f_1^8f_{12}^2}{f_1f_3f_1^4f_{12}^2} \pmod{2}\\
&= 
\frac{f_1^3}{f_3}  \\
&= 
b(q) \ \ \ \textrm{thanks to Lemma \ref{bq_prod_representation}}   \\
&\equiv 
1 + \sum_{n=-\infty}^{\infty}  \frac{q^{3n+1}}{1-q^{9n+3}} \pmod{2} \ \ \ \textrm{thanks to Lemma  \ref{bq_sum_representation}}\\
&\equiv 
1 + \sum_{n\geq1,\; 3 \nmid n} q^{n^2} \pmod{2}  
\end{align*}
and this yields our result.
At the last step we applied
\begin{align*}
\sum_{n=-\infty}^{\infty} \frac{q^{3n+1}}{1-q^{9n+3}}&=\sum_{n=0}^{\infty} \frac{q^{3n+1}}{1-q^{9n+3}}+\sum_{n=0}^{\infty} \frac{q^{-3n-2}}{1-q^{-9n-6}}\\
&=\sum_{m,n\geq 0} q^{3n+1}q^{m(9n+3)}
-\sum_{n=0}^{\infty} \frac{q^{6n+4}}{1-q^{9n+6}}\\
&=\sum_{m,n\geq 0}  q^{(3n+1)(3m+1)}
-\sum_{m,n\geq 0}   q^{(3n+2)(3m+2)}\\
&=\sum_{n=0}^{\infty} q^{(3n+1)^2}-
\sum_{n=0}^{\infty}q^{(3n+2)^2}\\
&\qquad+2\sum_{0\leq n<m} q^{(3n+1)(3m+1)}
-2\sum_{0\leq n<m}  q^{(3n+2)(3m+2)}.
\end{align*}
\end{proof}

This now leads to numerous corollaries modulo 2, including the following congruences.
\begin{corollary}
\label{a6nmod2}
For all $n\geq 0$, 
\begin{align*}
a(6n+4) &\equiv 0 \pmod{2}, \\
a(6n+6) &\equiv 0 \pmod{2}.
\end{align*}
\end{corollary}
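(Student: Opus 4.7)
The plan is to derive both congruences as immediate consequences of the parity characterization in Theorem \ref{a2n_parity_characterization}, by applying it with $n$ replaced by $3n+2$ and $3(n+1)$, and then ruling out the "odd" case in each instance through elementary modular arithmetic.

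First I would write $a(6n+4) = a(2(3n+2))$, so that Theorem \ref{a2n_parity_characterization} gives $a(6n+4) \equiv 1 \pmod 2$ exactly when $3n+2 = 0$ or $3n+2$ is a square not divisible by $3$. Since $n \geq 0$ forces $3n+2 \geq 2$, the first option is impossible. The second option fails because $3n+2 \equiv 2 \pmod 3$, while any perfect square is congruent to $0$ or $1$ modulo $3$. Hence $a(6n+4) \equiv 0 \pmod 2$.

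Similarly, I would write $a(6n+6) = a(2 \cdot 3(n+1))$, and apply Theorem \ref{a2n_parity_characterization} with $n$ replaced by $3(n+1)$. Again $n \geq 0$ rules out the case $3(n+1)=0$. The remaining case requires $3(n+1)$ to be a square \emph{not} divisible by $3$, but $3(n+1)$ is visibly divisible by $3$, so that case is excluded as well. Therefore $a(6n+6) \equiv 0 \pmod 2$.

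I do not anticipate any real obstacle here: the entire argument is a direct inspection of which arguments $m$ satisfy the "odd exceptional" condition of Theorem \ref{a2n_parity_characterization} within the residue classes $m \equiv 2 \pmod 3$ and $m \equiv 0 \pmod 3$. The only subtlety worth flagging explicitly is the use of the fact that squares lie in the residue classes $\{0,1\}$ modulo $3$, which cleanly eliminates the $6n+4$ case.
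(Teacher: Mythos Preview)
Your proof is correct and follows essentially the same approach as the paper: both argue directly from Theorem~\ref{a2n_parity_characterization}, using that $3n+2$ cannot be a square (since $2$ is a quadratic nonresidue modulo $3$) and that $3(n+1)$ is divisible by $3$ and hence excluded from the ``odd'' exponents.
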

\begin{proof}
Note that there are no squares of the form $3n+2$ since 2 is a quadratic nonresidue modulo 3.  This yields the result for $2(3n+2) = 6n+4$.  Moreover, for $n\geq 0$, no number of the form $3n+3$ appears as an exponent on the right--hand side of Theorem \ref{a2n_parity_characterization}.  This completes the proof.  
\end{proof}
\begin{corollary}
\label{a8nmod2}
For all $n\geq 0$, 
\begin{align*}
a(8n+4) &\equiv 0 \pmod{2}, \\
a(8n+6) &\equiv 0 \pmod{2}.
\end{align*}
\end{corollary}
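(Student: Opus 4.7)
The plan is to deduce this corollary directly from Theorem \ref{a2n_parity_characterization}, just as Corollary \ref{a6nmod2} was. Write $8n+4 = 2(4n+2)$ and $8n+6 = 2(4n+3)$, so that Theorem \ref{a2n_parity_characterization} tells us
$$a(8n+4) \equiv 1 \pmod 2 \quad\Longleftrightarrow\quad 4n+2 = 0 \text{ or } 4n+2 \text{ is a square not divisible by } 3,$$
and similarly for $a(8n+6)$ with $4n+3$ in place of $4n+2$. Since $4n+2 \geq 2$ and $4n+3 \geq 3$, neither can equal $0$.

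The remaining step, which is the only substantive point, is to observe that neither $4n+2$ nor $4n+3$ can be a perfect square. This follows from the elementary fact that squares are congruent to $0$ or $1$ modulo $4$: since $4n+2 \equiv 2 \pmod 4$ and $4n+3 \equiv 3 \pmod 4$, both residues are quadratic nonresidues modulo $4$. Hence neither $4n+2$ nor $4n+3$ can appear among the exponents $m^2$ (with $3 \nmid m$) on the right-hand side of Theorem \ref{a2n_parity_characterization}, so $a(8n+4)$ and $a(8n+6)$ must both be even. There is no real obstacle here; the corollary is a direct consequence of the characterization of $a(2n) \pmod 2$ combined with a trivial mod-$4$ obstruction on squares, analogous to the mod-$3$ obstruction used in Corollary \ref{a6nmod2}.
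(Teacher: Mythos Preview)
Your proof is correct and follows essentially the same approach as the paper: both write $8n+4=2(4n+2)$ and $8n+6=2(4n+3)$, invoke Theorem~\ref{a2n_parity_characterization}, and use the fact that squares are congruent to $0$ or $1$ modulo $4$ to rule out $4n+2$ and $4n+3$ as squares.
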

\begin{proof}
Note that there are no squares of the form $4n+2$ nor $4n+3$ because all squares are congruent to either 0 or 1 modulo 4.  The result follows from Theorem \ref{a2n_parity_characterization}.
\end{proof}
\begin{corollary}
\label{a2pnmod2}
For all $n\geq 0$, and for all primes $p\geq 5$,
\begin{align*}
a(2(pn+r)) &\equiv 0 \pmod{2}
\end{align*}
where $r$ is a quadratic nonresidue modulo $p$ with $1\leq r\leq p-1$.  
\end{corollary}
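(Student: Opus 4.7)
The plan is to reduce immediately to Theorem \ref{a2n_parity_characterization}, which completely describes the parity of $a(2N)$: one has $a(2N) \equiv 1 \pmod 2$ if and only if either $N = 0$ or $N$ is a perfect square not divisible by $3$. So to prove the corollary it suffices to show that, under the stated hypotheses, the integer $N := pn + r$ is neither zero nor a perfect square.

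The first condition is trivial: since $1 \leq r \leq p-1$ and $n \geq 0$, we have $N \geq 1 > 0$.

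For the second condition, suppose for contradiction that $N = m^2$ for some positive integer $m$. Reducing modulo $p$, this would give $m^2 \equiv r \pmod p$, exhibiting $r$ as a quadratic residue modulo $p$, which contradicts the hypothesis that $r$ is a quadratic nonresidue modulo $p$. Hence $N$ cannot be a perfect square, and by Theorem \ref{a2n_parity_characterization} we conclude $a(2N) \equiv 0 \pmod 2$, as required.

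There is no substantial obstacle here; the entire content of the result lives in Theorem \ref{a2n_parity_characterization}, and the corollary is simply the observation that the arithmetic progression $\{pn + r : n \geq 0\}$ avoids the squares whenever $r$ is a quadratic nonresidue modulo $p$. The restriction $p \geq 5$ plays no active role in the present proof — it is inherited from the statement and presumably kept to avoid overlap with the small-prime cases already handled in Corollaries \ref{a6nmod2} and \ref{a8nmod2}, where one also has to deal with the divisibility-by-$3$ clause of Theorem \ref{a2n_parity_characterization}, a complication that does not arise here because a nonresidue $r$ cannot itself be $0 \pmod p$.
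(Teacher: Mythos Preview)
Your proof is correct and takes essentially the same approach as the paper, which simply says ``This follows immediately from Theorem \ref{a2n_parity_characterization}.'' You have merely spelled out the one-line reduction in full detail.
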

\begin{proof}
This follows immediately from Theorem \ref{a2n_parity_characterization}.
\end{proof}
 
We require one additional parity result for our work below, which is the following.  

\begin{theorem}
\label{a24n13mod2}
For all $n\geq 0$, $a(24n+13) \equiv 0 \pmod{2}.$  
\end{theorem}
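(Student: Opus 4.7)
Since $24n+13 = 4(6n+3)+1$, the coefficient $a(24n+13)$ equals the coefficient of $q^{6n+3}$ in $\sum_{k\geq 0} a(4k+1)q^k$. My plan is therefore to first 2-dissect the expression from Theorem \ref{genfn_a2n} to produce a clean product formula for $\sum a(4n+1) q^n$, then reduce modulo 2 and use the 3-dissection of the theta function $\psi$ from Lemma \ref{lemma:psi_3_dissection} together with Euler's pentagonal number theorem to show the relevant coefficient vanishes mod 2.

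For the 2-dissection, I write $\sum a(2n+1) q^n = -\frac{f_4^2 f_6 f_{24}}{f_1 f_2 f_3 f_8 f_{12}}$ as $-\frac{f_4^2 f_6 f_{24}}{f_2 f_8 f_{12}} \cdot \frac{1}{f_1 f_3}$ and insert the 2-dissection of $\frac{1}{f_1 f_3}$ from Lemma \ref{lemma:1_over_f1f3}. Keeping only the piece with even $q$-powers and substituting $q^2 \mapsto q$ yields, after straightforward cancellations,
$$\sum_{n\geq 0} a(4n+1) q^n = -\frac{f_2 f_4 f_6^4}{f_1^3 f_3^3 f_{12}}.$$
Applying $f_{2k} \equiv f_k^2 \pmod 2$ iteratively to the factors $f_2, f_4, f_6, f_{12}$ collapses the numerator to $f_1^6 f_3^8$ and the denominator to $f_1^3 f_3^7$ modulo 2, so that
$$\sum_{n\geq 0} a(4n+1) q^n \equiv f_1^3 f_3 \pmod 2.$$

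Jacobi's identity $f_1^3 = \sum_{k\geq 0}(-1)^k(2k+1) q^{k(k+1)/2}$ immediately gives $f_1^3 \equiv \psi(q) \pmod 2$, so Lemma \ref{lemma:psi_3_dissection} yields
$$\sum_{n\geq 0} a(4n+1) q^n \equiv P(q^3) f_3 + q \psi(q^9) f_3 \pmod 2.$$
Every exponent appearing in $P(q^3)$, $\psi(q^9)$, and $f_3$ is divisible by $3$, so the first summand contributes only at exponents $\equiv 0 \pmod 3$ and the second only at exponents $\equiv 1 \pmod 3$. Since $6n+3 \equiv 0 \pmod 3$, only $P(q^3) f_3$ can contribute to the coefficient of $q^{6n+3}$, and after the change of variable $q^3 \mapsto q$ this coefficient equals the coefficient of $q^{2n+1}$ in $P(q) f_1$.

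Finally, Euler's pentagonal number theorem $f_1 = \sum_{k\in\mathbb{Z}}(-1)^k q^{k(3k-1)/2}$ gives $f_1 \equiv P(q) \pmod 2$. Therefore $P(q) f_1 \equiv P(q)^2 \equiv P(q^2) \pmod 2$ by the Frobenius identity in $\mathbb{F}_2[[q]]$, and $P(q^2)$ contains only even powers of $q$, so the coefficient of $q^{2n+1}$ there vanishes. This proves $a(24n+13) \equiv 0 \pmod 2$. The main obstacle is the opening 2-dissection and the careful bookkeeping of the exponents of the $f_r$'s that produces the clean form $f_1^3 f_3 \pmod 2$; once that shape is in hand, the combination of Jacobi's identity, the 3-dissection of $\psi$, and Euler's pentagonal formula provides a very clean finish.
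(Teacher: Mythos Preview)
Your proof is correct but follows a genuinely different path from the paper's. The paper reduces modulo 2 \emph{before} any dissection, obtaining $\sum a(2n+1)q^n \equiv \tfrac{f_{12}}{f_2}\cdot\tfrac{f_3}{f_1}\pmod{2}$, and then applies the 2-dissection of $\tfrac{f_3}{f_1}$ (Lemma~\ref{lemma:f3_over_f1}) twice in succession to reach $\sum a(8n+5)q^n \equiv f_3 f_{12}\pmod 2$; since this is visibly a function of $q^3$, the case $8(3n+1)+5=24n+13$ drops out immediately. You instead perform one \emph{exact} 2-dissection via Lemma~\ref{lemma:1_over_f1f3}, obtain the closed form $\sum a(4n+1)q^n=-\tfrac{f_2 f_4 f_6^4}{f_1^3 f_3^3 f_{12}}$, reduce that to $f_1^3 f_3\pmod 2$, and then bring in classical theta machinery (Jacobi's cube identity, the 3-dissection of $\psi$ from Lemma~\ref{lemma:psi_3_dissection}, and Euler's pentagonal number theorem) to handle the residue class $6n+3$. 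The paper's route is shorter and stays within the same toolkit of $f_r$-dissection lemmas used elsewhere in the section; your route has the virtue of producing the exact generating function for $a(4n+1)$ along the way and of finishing with standard identities rather than a second application of a specialized dissection lemma.
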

\begin{proof}
From Theorem \ref{genfn_a2n}, we know 
\begin{align*}
\sum_{n=0}^\infty a(2n+1)q^n 
&= 
-\frac{f_4^2f_6f_{24}}{f_1f_2f_3f_8f_{12}}\\
&\equiv 
\frac{f_1^8f_{3}^2f_{12}^2}{f_1f_2f_3f_1^8f_{12}} \pmod{2}\\
&= 
\frac{f_{12}}{f_2}\cdot \frac{f_3}{f_1}.
\end{align*}
Using Lemma \ref{lemma:f3_over_f1}, we then see that 
\begin{align*}
\sum_{n=0}^\infty a(4n+1)q^n 
&\equiv 
\frac{f_6}{f_1}\cdot \frac{f_2f_3f_8f_{12}^2}{f_1^2f_4f_6f_{24}} \pmod{2} \\
&\equiv 
\frac{f_3}{f_1}\cdot \frac{f_6f_2f_4^2f_{12}^2}{f_2f_4f_6f_{12}^2} \pmod{2} \\
&= 
\frac{f_3}{f_1}\cdot f_4.
\end{align*}
Thanks again to Lemma \ref{lemma:f3_over_f1}, we then see that 
\begin{align*}
\sum_{n=0}^\infty a(8n+5)q^n 
&\equiv 
f_2\cdot \frac{f_3f_4^2f_{24}}{f_1^2f_8f_{12}}   \pmod{2}   \\
&\equiv 
\frac{f_2f_3f_2^4f_{12}^2}{f_2f_2^4f_{12}} \pmod{2} \\
&= 
{f_3f_{12}}
\end{align*}
which is a function of $q^3$.  Thus, for all $n\geq 0$,  $a(8(3n+1)+5) = a(24n+13)\equiv 0 \pmod{2}$.
\end{proof}

Next, we wish to prove the following congruences modulo 4 that are satisfied by $a(n)$.  

\begin{theorem}  
For all $n\geq 0$, 
\begin{align}
a(12n+6) &\equiv 0 \pmod{4}, \label{12n6_mod4} \\
a(16n+6) &\equiv 0 \pmod{4}, \label{16n6_mod4} \\
a(24n+16) &\equiv 0 \pmod{4}, \label{24n6_mod4} \quad\textrm{and}  \\
a(24n+22) &\equiv 0 \pmod{4}. \label{24n22_mod4}   
\end{align}
\end{theorem}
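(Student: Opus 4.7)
The plan is to work throughout modulo $4$, using the elementary congruence $(1-q^k)^4 \equiv (1-q^{2k})^2 \pmod 4$ (equivalently $f_k^4 \equiv f_{2k}^2 \pmod 4$) as the workhorse. Starting from Theorem \ref{genfn_a2n} and applying one further $2$-dissection via Lemma \ref{lemma:1_over_f1f3}, I get $\sum a(4n)q^n = \tfrac{f_4^3 f_6^7}{f_1^2 f_2^2 f_3^4 f_{12}^3}$ and $\sum a(4n+2)q^n = \tfrac{f_2^4 f_6 f_{12}}{f_1^4 f_3^2 f_4}$, which reduce modulo $4$ to
\[
\sum_{n\ge 0} a(4n)q^n \equiv \frac{f_4^3 f_6^5}{f_1^2 f_2^2 f_{12}^3}\pmod 4,\qquad \sum_{n\ge 0} a(4n+2)q^n \equiv \frac{f_2^2 f_6 f_{12}}{f_3^2 f_4}\pmod 4.
\]

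For \eqref{12n6_mod4} I would factor the second expression as $\tfrac{1}{\varphi(-q^3)}\cdot f_{12}\cdot \varphi(-q^2)$ and apply Lemma \ref{lemma:f1squared_over_f2} with $q\mapsto q^2$, giving $\varphi(-q^2) = \varphi(-q^6) - 2 q^2 \tfrac{f_6 f_{36}^2}{f_{12}f_{18}}$. Each of the factors $\tfrac{1}{\varphi(-q^3)}$, $f_{12}$, $\varphi(-q^6)$, and $\tfrac{f_6 f_{36}^2}{f_{12}f_{18}}$ is a series in $q^3$, so only exponents in residue classes $0$ and $2\pmod 3$ appear; hence the $q^{3n+1}$ coefficient of $\sum a(4n+2)q^n$ vanishes modulo $4$.

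For \eqref{16n6_mod4} and \eqref{24n22_mod4} I would apply Lemma \ref{lemma:1_over_f1squared} (with $q\mapsto q^3$) to the $\tfrac{1}{f_3^2}$ factor, isolate the odd-index part, and substitute $q^2\mapsto q$, producing
\[
\sum_{m\ge 0} a(8m+6)q^m \equiv 2q\cdot \frac{f_1^2 f_6^3 f_{24}^2}{f_2 f_3^4 f_{12}} \pmod 4.
\]
Iterated use of $f_{2k}\equiv f_k^2 \pmod 2$ collapses the fraction modulo $2$ to $f_3^{14}\equiv f_6^7$, a power series in $q^6$; therefore $[q^{2n-1}]$ and $[q^{3n+1}]$ of the fraction are both even, which gives \eqref{16n6_mod4} (take $m=2n$) and \eqref{24n22_mod4} (take $m=3n+2$).

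For \eqref{24n6_mod4}, a parallel $2$-dissection via Lemma \ref{lemma:1_over_f1squared} followed by repeated $f_k^4\equiv f_{2k}^2\pmod 4$ reductions collapses the problem to $\sum a(8n)q^n \equiv \tfrac{f_2 f_3 f_4}{f_1^3 f_6} = \tfrac{f_2}{f_1^2}\cdot\tfrac{f_3}{f_6}\cdot\tfrac{f_4}{f_1}\pmod 4$. I then apply the $3$-dissections from Lemmas \ref{lemma:daSilva_Sellers_3diss} and \ref{lemma:AHS10_3diss}, writing $\tfrac{f_2}{f_1^2} = A_0+2qA_1+4q^2A_2$ and $\tfrac{f_4}{f_1} = B_0+qB_1+2q^2B_2$ with explicit eta-quotients $A_i,B_j$ in $q^3$; expanding the product mod $4$ and multiplying by $\tfrac{f_3}{f_6}$, the $q^2\pmod 3$ component reduces to $2q^2\tfrac{f_3}{f_6}(A_0B_2 + A_1B_1)$. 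A direct calculation reveals the key identity $A_0 B_2 = A_1 B_1 = \tfrac{f_6^5 f_9^6 f_{36}}{f_3^{11} f_{18}^2}$, so the component becomes $4q^2\tfrac{f_3}{f_6}\cdot \tfrac{f_6^5 f_9^6 f_{36}}{f_3^{11} f_{18}^2}\equiv 0\pmod 4$, yielding \eqref{24n6_mod4}. The main obstacle is precisely this last cancellation: the dissection bookkeeping is routine, but $A_0B_2 = A_1B_1$ is a non-obvious eta-quotient equality that must be recognized and verified by direct multiplication, and without it the remaining terms are only guaranteed to be divisible by $2$, not by $4$.
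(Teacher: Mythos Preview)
Your proof is correct. For \eqref{12n6_mod4}, \eqref{16n6_mod4}, and \eqref{24n22_mod4} your argument is essentially identical to the paper's: you obtain the same reductions $\sum a(4n+2)q^n\equiv \frac{f_2^2 f_6 f_{12}}{f_3^2 f_4}\pmod 4$ and $\sum a(8m+6)q^m\equiv 2q\,\frac{f_1^2 f_6^3 f_{24}^2}{f_2 f_3^4 f_{12}}\pmod 4$, and the paper likewise collapses the latter fraction modulo~$2$ to a series in $q^6$ (they write it as $f_6 f_{12}^3$, which is your $f_6^7$ up to $f_{12}\equiv f_6^2\pmod 2$).

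For \eqref{24n6_mod4} you take a genuinely different route. The paper stays with $\sum a(4n)q^n$, factors it as $\frac{f_6^7}{f_3^4 f_{12}^3}\cdot\frac{f_2}{f_1^2}\cdot\frac{f_4^3}{f_2^3}$, performs the $3$-dissection first using \emph{both} identities of Lemma~\ref{lemma:daSilva_Sellers_3diss} (with $q\mapsto q^2$ in the second), extracts $\sum a(12n+4)q^n\pmod 4$, and only then $2$-dissects via Lemma~\ref{lemma:f3_over_f1}; the final cancellation is that two surviving terms both reduce to $2q\,f_3^{12}/f_1^4$ and sum to $0\pmod 4$. You instead $2$-dissect first via Lemma~\ref{lemma:1_over_f1squared}, reduce to $\sum a(8n)q^n\equiv \frac{f_2}{f_1^2}\cdot\frac{f_3}{f_6}\cdot\frac{f_4}{f_1}\pmod 4$, and then $3$-dissect using Lemma~\ref{lemma:daSilva_Sellers_3diss} together with Lemma~\ref{lemma:AHS10_3diss}; your key cancellation is the exact eta-quotient equality $A_0B_2=A_1B_1=\frac{f_6^5 f_9^6 f_{36}}{f_3^{11}f_{18}^2}$, which is easily checked by multiplying out the explicit factors. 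Both orderings lead to the same ``two equal halves sum to a multiple of~$4$'' phenomenon; your version trades the $3$-dissection of $\frac{f_2^3}{f_1^3}$ for that of $\frac{f_4}{f_1}$, which makes the decisive identity a one-line product check rather than a chain of modular simplifications.
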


\begin{proof}
We begin the proof of this theorem by returning to Theorem \ref{genfn_a2n} and performing another 2--dissection.  Namely, from Lemma \ref{lemma:1_over_f1f3}, we have 
\begin{align*}
\sum_{n=0}^\infty a(2n)q^n 
&= 
\frac{f_8f_{12}^2}{f_4f_{24}}\left( \frac{1}{f_1f_3} \right) \\
&= 
\frac{f_8f_{12}^2}{f_4f_{24}}\left( \frac{f_8^2f_{12}^5}{f_2^2f_4f_6^4f_{24}^2} +q\frac{f_4^5f_{24}^2}{f_2^4f_6^2f_{8}^2f_{12}} \right).
\end{align*}
We then immediately see that 
\begin{equation}
\label{genfn_a4n}
\sum_{n=0}^\infty a(4n)q^n = \frac{f_4^3f_6^7}{f_1^2f_2^2f_3^4f_{12}^3}
\end{equation} 
and 
\begin{equation}
\label{genfn_a4n2}
\sum_{n=0}^\infty a(4n+2)q^n = \frac{f_2^4f_6f_{12}}{f_1^4f_3^2f_{4}}
\end{equation} 
after elementary simplifications.  

We begin by rewriting \eqref{genfn_a4n} as  
\begin{align*}
\sum_{n=0}^\infty a(4n)q^n 
&= 
\frac{f_6^7}{f_3^4f_{12}^3} \cdot \frac{f_2}{f_1^2} \cdot \frac{f_4^3}{f_2^3}
\end{align*}
which allows us to perform a 3--dissection thanks to Lemma \ref{lemma:daSilva_Sellers_3diss}.  In this vein, we have 
\begin{align*}
\sum_{n=0}^\infty a(4(3n+1))q^{3n+1} 
& \equiv  
\frac{f_6^7}{f_3^4f_{12}^3} \left( \frac{f_6^4f_9^6}{f_3^8f_{18}^3}\cdot 6q^4\frac{f_{12}^3f_{18}^2f_{36}^2}{f_6^7} + 2q\frac{f_6^3f_9^3}{f_3^7}\cdot\frac{f_{12}}{f_6}   \right)   \pmod{4} \\
&= 
6q^4\frac{f_6^4f_9^6f_{36}^2}{f_3^{12}f_{18}} + 2q\frac{f_6^9f_9^3}{f_3^{11}f_{12}^2}
\end{align*}
which implies 
\begin{align*}
\sum_{n=0}^\infty a(12n+4)q^{n} 
& \equiv  
2q\frac{f_2^4f_3^6f_{12}^2}{f_1^{12}f_{6}} + 2\frac{f_2^9f_3^3}{f_1^{11}f_{4}^2} \pmod{4} \\
& \equiv 
2q\frac{f_2^4f_6^3f_{12}^2}{f_2^{6}f_{6}} + 2\frac{f_2^9f_6}{f_2^{5}f_{4}^2}\cdot\frac{f_3}{f_1} \pmod{4} \\
&= 
2q\frac{f_2^4f_6^3f_{12}^2}{f_2^{6}f_{6}} + 2\frac{f_2^9f_6}{f_2^{5}f_{4}^2}\left(  \frac{f_4f_{6}f_{16}f_{24}^2}{f_2^2f_8f_{12}f_{48}} +q\frac{f_6f_8^2f_{48}}{f_2^2f_{16}f_{24}}  \right) 
\end{align*}
using Lemma \ref{lemma:f3_over_f1}.
This means that 
\begin{align*}
\sum_{n=0}^\infty a(12(2n+1)+4)q^{2n+1} 
& \equiv  
2q\frac{f_2^4f_6^3f_{12}^2}{f_2^{6}f_{6}} + 2q\frac{f_2^9f_6}{f_2^{5}f_{4}^2}\cdot  \frac{f_6f_8^2f_{48}}{f_2^2f_{16}f_{24}}  \pmod{4}  \\
&= 
2q\frac{f_6^2f_{12}^2}{f_2^2} +2q\frac{f_2^2f_6^2f_8^2f_{48}}{f_4^2f_{16}f_{24}} \\
& \equiv 
2q\frac{f_3^4f_3^8}{f_1^4} + 2q\frac{f_1^4f_3^4f_1^{16}f_3^{16}}{f_1^8f_1^{16}f_3^8} \pmod{4} \\
& \equiv 
2q\frac{f_3^{12}}{f_1^4} + 2q\frac{f_3^{12}}{f_1^4}  \pmod{4}  \\
& \equiv 
0 \pmod{4}.
\end{align*}
This proves \eqref{24n6_mod4}.  

Next, from \eqref{genfn_a4n2} and the fact that $f_1^4 \equiv f_2^2 \pmod{4}$, we have 
\begin{align}
\sum_{n=0}^\infty a(4n+2)q^n
&\equiv 
 \frac{f_2^2f_6f_{12}}{f_3^2f_{4}} \pmod{4} \label{4n2mod4-intermediate} \\
&= 
 \frac{f_2^2f_6f_{12}}{f_{4}}\left( \frac{f_{24}^5}{f_{6}^5f_{48}^2}+2q^3\frac{f_{12}^2f_{48}^2}{f_6^5f_{24}} \right)  \notag   
\end{align}
 where we also applied Lemma \ref{lemma:1_over_f1squared}.
From the above, we see that
$$
\sum_{n=0}^\infty a(4(2n+1)+2) q^{2n+1} \equiv   2q^3\frac{f_2^2f_6f_{12}}{f_4}\cdot \frac{f_{12}^2f_{48}^2}{f_6^5f_{24}}  \pmod{4}
$$
which means 
\begin{align*}
\sum_{n=0}^\infty a(8n+6) q^{n} 
&\equiv   
2q\frac{f_1^2f_6^3f_{24}^2}{f_2f_3^4f_{12}} \pmod{4} \\
&\equiv   
2q\frac{f_2f_6^3f_{12}^4}{f_2f_6^2f_{12}} \pmod{4} \\
&\equiv 
2qf_6f_{12}^3 \pmod{4}. 
\end{align*}
Note that this last expression, when written as a power series in $q$, contains only odd powers of $q$.  Therefore, for all $n\geq 0$, 
$a(8(2n)+6) = a(16n+6) \equiv 0 \pmod{4}$, 
which is \eqref{16n6_mod4}.  In addition, the expression $2qf_6f_{12}^3$ can also be viewed as a function of $q^{3n+1}$, and this means that, for all $n\geq 0$, $a(8(3n+2)+6) = a(24n+22) \equiv 0 \pmod{4}$.  This is \eqref{24n22_mod4}.    

Returning to \eqref{4n2mod4-intermediate}, by Lemma \ref{lemma:f1squared_over_f2} we see that 
\begin{align*}
\sum_{n=0}^\infty a(4n+2)q^n 
&\equiv 
\frac{f_2^2f_6f_{12}}{f_3^2f_{4}} \pmod{4} \\
&\equiv 
\frac{f_6f_{12}}{f_3^2} \left(   \varphi(-q^6) -2q^2\frac{f_6f_{36}^2}{f_{12}f_{18}}  \right)       \pmod{4}
\end{align*}
and this last expression, when written as a power series in $q$, contains no powers of the form $q^{3n+1}$.  Therefore, for all $n\geq 0$, 
$$a(4(3n+1)+2) = a(12n+6) \equiv 0 \pmod{4}.$$  
This is \eqref{12n6_mod4}, and this completes the proof of the theorem.  
\end{proof}
 
We now transition to proving a pair of congruences modulo 8 satisfied by $a(n)$.  
\begin{theorem}
\label{12n9_24n19_mod8}
For all $n\geq 0$, 
\begin{align*}
a(12n+9) & \equiv 0 \pmod{8}, \textrm{\ \ and} \\
a(24n+19) & \equiv 0 \pmod{8}.
\end{align*}
\end{theorem}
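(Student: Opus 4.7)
The plan is to iterate the $2$-dissection from Theorem~\ref{genfn_a2n} and then exploit the fact that both $\varphi(-q) = f_1^2/f_2$ and $\psi(-q) = f_1 f_4/f_2$ have no coefficients on $q^{3n+2}$, since squares and triangular numbers are never $\equiv 2 \pmod 3$. Applying Lemma~\ref{lemma:1_over_f1f3} to $\sum a(2n+1)q^n = -\frac{f_4^2 f_6 f_{24}}{f_1 f_2 f_3 f_8 f_{12}}$ from Theorem~\ref{genfn_a2n} and separating the even and odd parts yields
\[
\sum_{n=0}^\infty a(4n+1) q^n = -\frac{f_2 f_4 f_6^4}{f_1^3 f_3^3 f_{12}}, \qquad \sum_{n=0}^\infty a(4n+3) q^n = -\frac{f_2^7 f_{12}^3}{f_1^5 f_3 f_4^3 f_6^2}.
\]
Since $12n+9 = 4(3n+2)+1$, proving $a(12n+9) \equiv 0 \pmod 8$ amounts to showing that the coefficient of $q^{3n+2}$ in $\sum a(4m+1)q^m$ is divisible by $8$; since $24n+19 = 4(6n+4)+3$, the second congruence reduces, after one more $2$-dissection via Lemma~\ref{lemma:1_over_f1squaredf3squared}, to the analogous statement for $\sum a(8m+3)q^m$.

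Using $f_k^8 \equiv f_{2k}^4 \pmod 8$ (proved by iterating $f_k^4 \equiv f_{2k}^2 \pmod 4$), so that $\tfrac{1}{f_k^3} \equiv \tfrac{f_k^5}{f_{2k}^4} \pmod 8$, and combining with the classical identities $\varphi(-q) = f_1^2/f_2$ and $\psi(-q) = f_1 f_4/f_2$, one recasts
\[
\sum_{n=0}^\infty a(4n+1) q^n \;\equiv\; -\frac{f_3^5}{f_{12}} \cdot \varphi(-q)^2 \cdot \psi(-q) \pmod 8.
\]
The prefactor $f_3^5/f_{12}$ is a function of $q^3$, so the relevant $q^{3n+2}$ coefficients are entirely controlled by $\varphi(-q)^2\psi(-q)$. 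Writing $\varphi(-q) = U - 2qX$ from Lemma~\ref{lemma:f1squared_over_f2} (with $X = f_3 f_{18}^2/(f_6 f_9)$) and $\psi(-q) = Y - qZ$ from Lemma~\ref{lemma:psi_3_dissection} (with $Y = P(-q^3)$, $Z = \psi(-q^9)$), where $U, X, Y, Z$ are all functions of $q^3$, a direct expansion of $\varphi(-q)^2 = U^2 - 4qUX + 4q^2 X^2$ multiplied by $Y - qZ$ pins down the $q^{3n+2}$-component of $\varphi(-q)^2\psi(-q)$ as $4q^2 X(UZ + XY)$. That yields a factor of $4$ automatically, and the remaining factor of $2$ reduces to checking that the auxiliary $q^3$-series $X(UZ + XY)$ has all-even coefficients, which one would verify by applying the mod-$2$ simplification $f_k^2 \equiv f_{2k}$ throughout and recognising the outcome as a standard theta-series identity.

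The congruence $a(24n+19) \equiv 0 \pmod 8$ is handled in exactly the same way: a second $2$-dissection of $\sum a(4n+3)q^n$ isolates $\sum a(8m+3)q^m$, which admits an analogous mod-$8$ reduction of $\varphi(-q)^2\psi(-q)$-type, and extracting the $q^{3n+2}$-component again produces the explicit factor of $4$ together with a parity-type identity. The main obstacle in both congruences is this concluding mod-$2$ check on the auxiliary factor in $q^3$: once it is in hand, divisibility by $8$ follows by combining with the explicit factor of $4$ obtained from the $\varphi(-q)^2$-expansion.
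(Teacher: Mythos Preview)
Your approach to $a(12n+9)$ is different from the paper's and essentially works. The paper writes $\sum a(4n+1)q^n=-\frac{f_6^4}{f_3^3f_{12}}\cdot\frac{f_2}{f_1^2}\cdot\frac{f_4}{f_1}$ and applies the \emph{exact} $3$-dissections of $f_2/f_1^2$ and $f_4/f_1$ (Lemmas~\ref{lemma:daSilva_Sellers_3diss} and~\ref{lemma:AHS10_3diss}) to compute $\sum a(12n+9)q^n$ as a genuine eta-quotient identity, which then collapses mod~$8$. Your route---reducing mod~$8$ first to $-\frac{f_3^5}{f_{12}}\varphi(-q)^2\psi(-q)$ and then $3$-dissecting $\varphi(-q)$ and $\psi(-q)$---is lighter, and the mod-$2$ check you defer is in fact immediate: mod~$2$ one has $U\equiv 1$, $Z\equiv f_{36}/f_9$, $X\equiv f_{36}/(f_3f_9)$, and $Y=P(-q^3)\equiv P(q^3)=\frac{f_6f_9^2}{f_3f_{18}}\equiv f_3$, so $UZ\equiv XY\equiv f_{36}/f_9$ and $UZ+XY\equiv 0$. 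You should write this step out rather than leave it as ``one would verify.''

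For $a(24n+19)$, however, your proposal has a real gap. You assert that after a further $2$-dissection $\sum a(8m+3)q^m$ ``admits an analogous mod-$8$ reduction of $\varphi(-q)^2\psi(-q)$-type,'' but you do not perform the dissection, and the claim is not what actually happens. Carrying it out (as the paper does) gives, via $f_1^{-8}\equiv f_2^{-4}\pmod 8$ and Lemma~\ref{bq_2diss},
\[
\sum_{m\ge0}a(8m+3)q^m\ \equiv\ -\frac{f_6^2}{f_3}\cdot b(q)\pmod 8,
\]
where $b(q)=f_1^3/f_3$ is the Borwein cubic theta function. This is \emph{not} of the form $(\text{function of }q^3)\cdot\varphi(-q)^2\psi(-q)$, so your template does not apply. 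The paper instead invokes Lemma~\ref{bq_aqcubed_and_cqcubed}, namely $b(q)=a(q^3)-3q\,f_9^3/f_3$, which shows directly that $b(q)$ has no $q^{3n+2}$-component; hence $\sum a(8m+3)q^m$ has none mod~$8$, and no residual ``factor of $4$ plus parity check'' is needed at all. You would need to supply this (or an equivalent) argument; the analogy you sketch does not carry over.
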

 
\begin{proof}

From \eqref{a2n1-dissection}, we know 
\begin{align}
\sum_{n=0}^\infty a(2n+1)q^n 
&= 
-\frac{f_4^2f_6f_{24}}{f_2f_8f_{12}}\cdot \frac{1}{f_1f_3} \notag \\
&= 
-\frac{f_4^2f_6f_{24}}{f_2f_8f_{12}}\left( \frac{f_8^2f_{12}^5}{f_2^2f_4f_6^4f_{24}^2} +q\frac{f_4^5f_{24}^2}{f_2^4f_6^2f_{8}^2f_{12}} \right) \label{a2n1-diss-intermediate}
\end{align}
using Lemma \ref{lemma:1_over_f1f3}.
Thus, we have 
\begin{align*}
\sum_{n=0}^\infty a(4n+1)q^n 
&=
-\frac{f_2^2f_3f_{12}f_4^2f_6^5}{f_1f_4f_6f_1^2f_2f_3^4f_{12}^2} \\
&= 
-\frac{f_6^4}{f_3^3f_{12}}\cdot \frac{f_2f_4}{f_1^3} \\
&= 
-\frac{f_6^4}{f_3^3f_{12}}\cdot \frac{f_2}{f_1^2}\cdot\frac{f_4}{f_1}.  
\end{align*}
Using Lemma \ref{lemma:daSilva_Sellers_3diss} and Lemma \ref{lemma:AHS10_3diss}, we know 
\begin{align*}
&\sum_{n=0}^\infty a(4(3n+2)+1)q^{3n+2}  \\
&= 
-\frac{f_6^4}{f_3^3f_{12}}\left(  2q^2\frac{f_6^4f_9^6}{f_3^8f_{18}^3}\frac{f_6f_{18}f_{36}}{f_3^3} + 2q^2\frac{f_6^3f_9^3}{f_3^7}\frac{f_6^2f_9^3f_{36}}{f_3^4f_{18}^2} + 4q^2\frac{f_6^2f_{18}^3}{f_3^6}\frac{f_{12}f_{18}^4}{f_3^3f_{36}^2}\right).
\end{align*}
This yields 
\begin{align*}
\sum_{n=0}^\infty a(12n+9)q^{n}  
&= 
-\frac{f_2^4}{f_1^3f_{4}}\left(  2\frac{f_2^5f_3^6f_{12}}{f_1^{11}f_6^2}+2\frac{f_2^5f_3^6f_{12}}{f_1^{11}f_6^2}+ 4\frac{f_2^2f_4f_6^7}{f_1^9f_{12}^2}\right) \\
&= 
-\frac{f_2^4}{f_1^3f_{4}}\left(  4\frac{f_2^5f_3^6f_{12}}{f_1^{11}f_6^2}+ 4\frac{f_2^2f_4f_6^7}{f_1^9f_{12}^2}\right) \\
&\equiv 
-\frac{f_2^4}{f_1^3f_{4}}\left(  4\frac{f_3^6}{f_1}+ 4\frac{f_3^6}{f_1}\right) \pmod{8} \\
&\equiv 
0 \pmod{8}.
\end{align*}
This proves the first congruence in this theorem.  
Returning to \eqref{a2n1-diss-intermediate}, we can also see that 
\begin{align*}
\sum_{n=0}^\infty a(4n+3)q^{n}  
&= 
-\frac{f_2^2f_3f_{12}}{f_1f_4f_{6}}\cdot \frac{f_2^5f_{12}^2}{f_1^4f_3^2f_{4}^2f_{6}} \\
&= 
-\frac{f_2^7f_{12}^3}{f_1^5f_3f_{4}^3f_{6}^2} \\
&= 
-\frac{f_2^7f_{12}^3}{f_1^8f_{4}^3f_{6}^2} b(q) \\
& \equiv 
-\frac{f_2^7f_{12}^3}{f_2^4f_{4}^3f_{6}^2} b(q) \pmod{8} \\
&= 
-\frac{f_2^3f_{12}^3}{f_{4}^3f_{6}^2}\left( b(q^4) -3q\psi(q^6)\left( \psi(q^2) - 3q^2\psi(q^{18}) \right) \right)
\end{align*}
from Lemma \ref{bq_2diss}.  Thus, we have 
\begin{align*}
\sum_{n=0}^\infty a(4(2n)+3)q^{2n}  
&\equiv 
-\frac{f_2^3f_{12}^3}{f_{4}^3f_{6}^2} b(q^4) \pmod{8}
\end{align*}
which means 
\begin{align*}
\sum_{n=0}^\infty a(8n+3)q^{n}  
&\equiv 
-\frac{f_1^3f_{6}^3}{f_{2}^3f_{3}^2} b(q^2) \pmod{8} \\
&= 
-\frac{f_1^3f_{6}^3}{f_{2}^3f_{3}^2}\cdot \frac{f_2^3}{f_6} \\
&= 
-\frac{f_1^3f_{6}^2}{f_{3}^2}  \\
&= 
-\frac{f_{6}^2}{f_{3}}b(q)  \\
&= 
-\frac{f_{6}^2}{f_{3}}\left( a(q^3) - 3q\frac{f_9^3}{f_3} \right) \textrm{\ \ from Lemma \ref{bq_aqcubed_and_cqcubed}}.
\end{align*}
Given the definition of $a(q)$ above, we know that $a(q^3)$ is a function of $q^3$.  Therefore, when written as a power series in $q$, the expression 
$$
-\frac{f_{6}^2}{f_{3}}\left( a(q^3) - 3q\frac{f_9^3}{f_3} \right)
$$
contains no terms of the form $q^{3n+2}$.  From the string of congruences and equalities above, we then know that, for all $n\geq 0$, $a(8(3n+2)+3) = a(24n+19) \equiv 0 \pmod{8}.$  This completes the proof of the theorem.  
\end{proof}

We can prove one additional congruence modulo 8 satisfied by the function $a(n)$.  

\begin{theorem}
\label{32n28mod8}
For all $n\geq 0$, $a(32n+28) \equiv 0 \pmod{8}$. 
\end{theorem}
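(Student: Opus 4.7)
The plan is to begin from the 2--dissection \eqref{genfn_a4n}, $\sum_{n\ge 0} a(4n)q^n = \frac{f_4^3 f_6^7}{f_1^2 f_2^2 f_3^4 f_{12}^3}$, and to extract the coefficient of $q^{8n+7}$ modulo $8$, since $a(32n+28)=[q^{8n+7}]\sum_n a(4n)q^n$. The obstruction is the factor $1/(f_1^2 f_3^4)$, which is not a series in $q^2$. Writing $\frac{1}{f_1^2 f_3^4}=\frac{1}{f_1^2 f_3^2}\cdot\frac{1}{f_3^2}$ and invoking Lemma~\ref{lemma:1_over_f1squaredf3squared} together with Lemma~\ref{lemma:1_over_f1squared} (applied with $q\to q^3$), a short computation modulo $8$ gives
$$\sum_{n\ge 0} a(4n)q^n \equiv T_0(q) + 2q\,T_1(q) + 2q^3\,T_3(q) + 4q^4\,T_4(q)\pmod 8,$$
where each $T_i$ is an explicit quotient involving only even-indexed eta factors, hence a function of $q^2$. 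Writing $T_i(q)=\tilde T_i(q^2)$, extraction of $[q^{8n+7}]$ kills the $T_0$ and $T_4$ terms on parity grounds and leaves
$$a(32n+28)\equiv 2\,[q^{4n+3}]\tilde T_1(q) + 2\,[q^{4n+2}]\tilde T_3(q)\pmod 8,$$
with $\tilde T_1(q)=\frac{f_2^7 f_6 f_{12}^5}{f_1^8 f_3^4 f_{24}^2}$ and $\tilde T_3(q)=\frac{f_2^3 f_4^5 f_{12}^4}{f_1^7 f_3^3 f_6 f_8^2}$.

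So the goal becomes to show that each of $[q^{4n+3}]\tilde T_1$ and $[q^{4n+2}]\tilde T_3$ is $\equiv 0\pmod 4$. The main tool is that $f_1^2=f_2\varphi(-q)$ with $\varphi(-q)=1+2g(q)$ for an integer series $g$, which yields $\varphi(-q)^4\equiv 1\pmod 8$ and $\varphi(-q^3)^2\equiv 1\pmod 4$. Consequently $\frac{1}{f_1^8}\equiv \frac{1}{f_2^4}\pmod 8$ and $\frac{1}{f_3^4}\equiv \frac{1}{f_6^2}\pmod 4$, together with the immediate corollaries $\frac{1}{f_1^7}\equiv \frac{f_1}{f_2^4}$ and $\frac{1}{f_3^3}\equiv \frac{f_3}{f_6^2}$ modulo $4$. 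Applied to $\tilde T_1$, these reductions eliminate every odd-index factor and give $\tilde T_1(q)\equiv \frac{f_2^3 f_{12}^5}{f_6 f_{24}^2}\pmod 4$, which is a series in $q^2$; therefore $[q^{4n+3}]\tilde T_1\equiv 0\pmod 4$.

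For $\tilde T_3$, the same reductions first give $\tilde T_3(q)\equiv \frac{f_1 f_3\, f_4^5 f_{12}^4}{f_2 f_6^3 f_8^2}\pmod 4$. Now Lemma~\ref{lemma:f1f3} splits $f_1 f_3$ as a series in $q^2$ minus $q$ times another such series, producing
$$\tilde T_3(q)\equiv \frac{f_4^3 f_{12}^8}{f_6^4 f_{24}^2}-q\,\frac{f_4^9 f_{12}^2 f_{24}^2}{f_2^2 f_6^2 f_8^4}\pmod 4.$$
Only the first summand can contribute to $[q^{4n+2}]$; after the variable change $q^2\to q$ it becomes $\frac{f_2^3 f_6^8}{f_3^4 f_{12}^2}$, and one last application of $1/f_3^4\equiv 1/f_6^2\pmod 4$ collapses it to $\frac{f_2^3 f_6^6}{f_{12}^2}$, again a function of $q^2$. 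Hence the coefficient of $q^{2n+1}$ in this last expression vanishes modulo $4$, so $[q^{4n+2}]\tilde T_3\equiv 0\pmod 4$. Combining the two contributions yields $a(32n+28)\equiv 0\pmod 8$, as required.

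The main difficulty is the factor $f_3^{-4}$: it prevents a one-step 2-dissection of $\sum a(4n)q^n$ and forces a chain of reductions in which, at each intermediate stage, the surviving piece must be arranged to be a series in $q^2$. The $\varphi$-function congruences above, together with the splitting of $f_1 f_3$ from Lemma~\ref{lemma:f1f3}, supply exactly the leverage needed to close this chain in four successive 2--dissection steps.
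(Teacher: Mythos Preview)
Your proof is correct and follows essentially the same approach as the paper. Both arguments start from \eqref{genfn_a4n}, expand $1/(f_1^2f_3^4)$ via Lemmas~\ref{lemma:1_over_f1squaredf3squared} and~\ref{lemma:1_over_f1squared} (with $q\mapsto q^3$), reduce using $f_1^4\equiv f_2^2\pmod 4$ (equivalently, your $\varphi(-q)^4\equiv 1\pmod 8$), and then apply Lemma~\ref{lemma:f1f3} to split the surviving $f_1f_3$ factor before a final parity observation. The only difference is organizational: the paper performs the chain of $2$--dissections sequentially, passing through explicit generating functions for $a(8n+4)$ and $a(16n+12)$ and then noting the latter is even in $q$, whereas you extract $[q^{8n+7}]$ from $\sum a(4n)q^n$ directly and handle the two odd-residue pieces $\tilde T_1$, $\tilde T_3$ in parallel; the underlying algebra is the same.
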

\begin{proof}
From \eqref{genfn_a4n}, we know 
$$
\sum_{n=0}^\infty a(4n)q^n 
= 
\frac{f_4^3f_6^7}{f_2^2f_{12}^3}\cdot \frac{1}{f_1^2f_3^4}.
$$
Using Lemma \ref{lemma:1_over_f1squaredf3squared} and Lemma \ref{lemma:1_over_f1squared}, we then have 
\begin{align*}
 \sum_{n=0}^\infty a(8n+4)q^{2n+1}&= \frac{f_4^3f_6^7}{f_2^2f_{12}^3} 
\cdot\left( 2q^3\frac{f_8^5f_{24}^5}{f_2^5f_6^5f_{16}^2f_{48}^2}\cdot \frac{f_{12}^2f_{48}^2}{f_6^5f_{24}} + 2q\frac{f_4^4f_{12}^4}{f_2^6f_6^6}\cdot \frac{f_{24}^5}{f_6^5f_{48}^2}\right.\\
&\qquad \qquad\qquad
 \left. + 8q^7\frac{f_4^2f_{12}^2f_{16}^4f_{48}^2}{f_2^5f_6^5f_8f_{24}}\cdot \frac{f_{12}^2f_{48}^2}{f_6^5f_{24}}  \right)  \\
&\equiv 
2q^3\frac{f_4^3f_8^5f_{24}^4}{f_2^7f_6^3f_{12}f_{16}^2} + 2q\frac{f_4^7f_{12}f_{24}^5}{f_2^8f_6^4f_{48}^2} \pmod{8}
\end{align*}
after simplification.  Dividing both sides of the above by $q$ and then replacing $q^2$ by $q$ throughout yields 
\begin{align}
\sum_{n=0}^\infty a(8n+4)q^{n} 
&\equiv 
2q\frac{f_2^3f_4^5f_{12}^4}{f_1^7f_3^3f_{6}f_{8}^2} + 2\frac{f_2^7f_{6}f_{12}^5}{f_1^8f_3^4f_{24}^2} \pmod{8} \notag \\
&\equiv 
2q\frac{f_2f_4f_{12}^4}{f_1^3f_3^3f_6} + 2\frac{f_1^4f_2f_{12}}{f_6} \pmod{8} \notag \\
&\equiv 
2q\frac{f_2f_4f_{12}^4}{f_1^3f_3^3f_6} + 2\frac{f_2^3f_{12}}{f_6} \pmod{8} \label{temp_mod4}\\
&\equiv 
2q\frac{f_4f_{12}^4}{f_2f_6^3}\left(f_1f_3\right) + 2\frac{f_2^3f_{12}}{f_6} \pmod{8} \notag
\end{align}
utilizing the fact that $f_2^2 \equiv f_1^4 \pmod{4}$ several times.  
We now apply Lemma \ref{lemma:f1f3} to obtain 
$$
\sum_{n=0}^\infty a(8n+4)q^{n} 
\equiv 
2q\frac{f_4f_{12}^4}{f_2f_6^3}\left( \frac{f_2f_8^2f_{12}^4}{f_4^2f_6f_{24}^2} -q\frac{f_4^4f_6f_{24}^2}{f_2f_{8}^2f_{12}^2}  \right) + 2\frac{f_2^3f_{12}}{f_6} 
\pmod{8}.
$$
From the above congruence, we see that 
\begin{align*}
\sum_{n=0}^\infty a(8(2n+1)+4)q^{2n+1} 
&\equiv 
2q\frac{f_4f_{12}^4}{f_2f_6^3}\cdot \frac{f_2f_8^2f_{12}^4}{f_4^2f_6f_{24}^2} \pmod{8} \\
&\equiv 
2q\frac{f_8^2f_{12}^6}{f_4f_{24}^2} \pmod{8}
\end{align*}
after simplification.  Dividing both sides of the above by $q$ and replacing $q^2$ by $q$ yields 
$$
\sum_{n=0}^\infty a(16n+12)q^{n} \equiv 2\frac{f_4^2f_{6}^6}{f_2f_{12}^2} \pmod{8}
$$
and this last expression is an even function in $q$.  Thus, for all $n\geq 0$, 
$$a(16(2n+1)+12) = a(32n+28) \equiv 0 \pmod{8}.$$
\end{proof}

We share one last congruence satisfied by $a(n)$, again modulo 4.  

\begin{theorem}
\label{32n20mod4}
For all $n\geq 0$, $a(32n+20) \equiv 0 \pmod{4}$. 
\end{theorem}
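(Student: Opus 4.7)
The plan is to follow the same pipeline that produced Theorem \ref{32n28mod8}, extracting first the $q^{4n+2}$-coefficients of $\sum a(8n+4)q^n$ modulo $4$, and then invoking the $2$-dissection of $b(q)$ together with the $3$-dissection of $\psi(q)$ to end up with a closed form that can be checked modulo $2$. Starting from the congruence
$$\sum_{n\geq 0} a(8n+4)q^n \equiv 2q\,\frac{f_4f_{12}^4}{f_2f_6^3}\,f_1f_3 + 2\,\frac{f_2^3f_{12}}{f_6} \pmod 4,$$
already isolated inside the proof of Theorem \ref{32n28mod8} (equation \eqref{temp_mod4}), I would apply Lemma \ref{lemma:f1f3} to split $f_1f_3$ into its two pieces, multiply out, and then select the $q$-even part. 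This gives
$$\sum_{n\geq 0} a(16n+4)q^n \equiv 2q\,\frac{f_2^5f_6^2f_{12}^2}{f_1^2f_3^2f_4^2} + 2\,\frac{f_1^3f_6}{f_3} \pmod 4.$$

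Next I would handle the two summands separately. For the first summand, since it is already $2\cdot(\text{something})$, I only need that thing modulo $2$; using $f_{2k}\equiv f_k^2\pmod 2$ the prefactor collapses to $f_3^{10}\equiv f_6^5\pmod 2$, and $f_6^5$ is a function of $q^6$, so the whole term becomes $2q\,f_6^5\pmod 4$. For the second summand, rewrite $f_1^3/f_3=b(q)$ and apply Lemma \ref{bq_2diss} with coefficients $-3,+9$ reduced modulo $4$:
$$2f_6\,b(q)\equiv 2f_6\,b(q^4)+2q\,f_6\psi(q^6)\psi(q^2)+2q^3\,f_6\psi(q^6)\psi(q^{18})\pmod 4.$$
Picking up the odd-in-$q$ part, dividing by $q$, and sending $q^2\to q$, this yields
$$\sum_{n\geq 0} a(32n+20)q^n \equiv 2f_3^5+2f_3\psi(q^3)\psi(q)+2q\,f_3\psi(q^3)\psi(q^9)\pmod 4.$$

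Now I would use Lemma \ref{lemma:psi_3_dissection}, $\psi(q)=P(q^3)+q\psi(q^9)$, to replace $\psi(q^3)\psi(q)=\psi(q^3)P(q^3)+q\psi(q^3)\psi(q^9)$. The two $\psi(q^3)\psi(q^9)$ terms combine to $4q\,f_3\psi(q^3)\psi(q^9)\equiv 0\pmod 4$, leaving
$$\sum_{n\geq 0} a(32n+20)q^n \equiv 2f_3\bigl(f_3^4+\psi(q^3)P(q^3)\bigr)\pmod 4.$$
The final step, which I expect to be the key (though short) observation, is to show $f_3^4\equiv\psi(q^3)P(q^3)\pmod 2$. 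This follows from $\psi(q)=f_2^2/f_1\equiv f_1^3\pmod 2$, so $\psi(q^3)\equiv f_3^3\pmod 2$, together with $P(q)\equiv f_1\pmod 2$ (Euler's pentagonal number theorem reduces all signs to $+1$), giving $P(q^3)\equiv f_3\pmod 2$. Hence $\psi(q^3)P(q^3)\equiv f_3^4\pmod 2$, the bracket is even, and the whole right-hand side vanishes modulo $4$, proving $a(32n+20)\equiv 0\pmod 4$ for every $n\geq 0$. The only nontrivial ingredient is the identification $P(q)\equiv f_1\pmod 2$; once that is in place, the rest is bookkeeping with the dissection lemmas already assembled in Section \ref{pre2S}.
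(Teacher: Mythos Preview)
Your argument is correct. Starting from the congruence isolated just after \eqref{temp_mod4}, your $2$-dissection via Lemma~\ref{lemma:f1f3} gives the right $\sum a(16n+4)q^n$ (the sign discrepancy with $-2q$ is harmless mod $4$); the reductions $\frac{f_2^5f_6^2f_{12}^2}{f_1^2f_3^2f_4^2}\equiv f_3^{10}\equiv f_6^5\pmod 2$, $\psi(q^3)\equiv f_3^3\pmod 2$, and $P(q)\equiv f_1\pmod 2$ (from Euler's pentagonal number theorem) are all valid, and the two $\psi(q^3)\psi(q^9)$ terms do indeed combine to $0\pmod 4$.

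However, your route diverges from the paper's. The paper works directly within the eta-quotient framework: from \eqref{temp_mod4} it first simplifies mod $4$ to $2qf_4f_{12}^3\cdot\frac{1}{f_1f_3}+2f_2^3f_6$, dissects using Lemma~\ref{lemma:1_over_f1f3} to reach $\sum a(16n+4)q^n$, then rewrites the constant term as $2f_2f_6\cdot\frac{f_1}{f_3}$ and dissects again via Lemma~\ref{lemma:f1_over_f3}; the two odd pieces both collapse to $2qf_6f_{12}^2$ and cancel. Your version instead leans on Lemmas~\ref{bq_2diss} and~\ref{lemma:psi_3_dissection}, paralleling the machinery of Theorem~\ref{32n28mod8} and Theorem~\ref{12n9_24n19_mod8} more closely, at the cost of needing the extra observation $P(q)\equiv f_1\pmod 2$. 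The paper's approach is slightly more economical (no $b(q)$, no $P(q)$), while yours has the virtue of reusing exactly the same $b(q)$-based pipeline rather than switching to a new pair of dissection lemmas.
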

\begin{proof}
Returning to \eqref{temp_mod4}, and recognizing that divisibility by 8 immediately implies divisibility by 4, we have 
\begin{align*}
\sum_{n=0}^\infty a(8n+4)q^{n} 
&\equiv 
2q\frac{f_2f_4f_{12}^4}{f_1^3f_3^3f_6} + 2\frac{f_2^3f_{12}}{f_6} \pmod{4} \\
&\equiv 
2q\frac{f_1^2f_4f_3^4f_{12}^3}{f_1^3f_3^5}+ 2\frac{f_2^3f_{6}^2}{f_6} \pmod{4} \\
&\equiv 
2qf_4f_{12}^3\cdot \frac{1}{f_1f_3}+2f_2^3f_{6} \pmod{4} \\
&= 
2qf_4f_{12}^3\left( \frac{f_8^2f_{12}^5}{f_2^2f_4f_6^4f_{24}^2} +q\frac{f_4^5f_{24}^2}{f_2^4f_6^2f_{8}^2f_{12}} \right) +2f_2^3f_{6}
\end{align*}
using Lemma \ref{lemma:1_over_f1f3}.  Simplifying further modulo 4, we have 
$$
\sum_{n=0}^\infty a(8n+4)q^{n}  
\equiv 
2q\frac{f_4^3f_{12}^6}{f_{24}^2} + 2q^2\frac{f_4^4f_{12}f_{24}^2}{f_8^2} + 2f_2^3f_6 \pmod{4}
$$
which implies that 
$$
\sum_{n=0}^\infty a(16n+4)q^{2n}  
\equiv 
2q^2\frac{f_4^4f_{12}f_{24}^2}{f_8^2} + 2f_2^3f_6 \pmod{4}.
$$
Replacing $q^2$ by $q$ and then continuing to simplify modulo 4, we have 
\begin{align*}
\sum_{n=0}^\infty a(16n+4)q^{n}  
&\equiv 
2q\frac{f_2^4f_{6}f_{12}^2}{f_4^2} + 2f_1^3f_3 \pmod{4} \\
&\equiv 
2q\frac{f_2^4f_{6}f_{12}^2}{f_4^2} + 2f_2f_6\cdot \frac{f_1}{f_3} \pmod{4} \\
&= 
2q\frac{f_2^4f_{6}f_{12}^2}{f_4^2} + 2f_2f_6\left( \frac{f_2f_{16}f_{24}^2}{f_6^2f_8f_{48}} -q\frac{f_2f_8^2f_{12}f_{48}}{f_4f_6^2f_{16}f_{24}} \right) 
\end{align*}
from Lemma \ref{lemma:f1_over_f3}.  We then see that 
\begin{align*}
\sum_{n=0}^\infty a(16(2n+1)+4)q^{2n+1}  
&\equiv 
2q\frac{f_2^4f_{6}f_{12}^2}{f_4^2} -2q\frac{f_2^2f_8^2f_{12}f_{48}}{f_4f_6f_{16}f_{24}}  \pmod{4} \\
&\equiv 
2q{f_{6}f_{12}^2} -2q\frac{f_{12}f_{48}}{f_6f_{24}}  \pmod{4} \\
&\equiv 
2q{f_{6}f_{12}^2} -2q\frac{f_{6}^2f_{12}^4}{f_6f_{12}^2}  \pmod{4} \\
&\equiv 
2q{f_{6}f_{12}^2} -2q{f_{6}f_{12}^2}  \pmod{4} \\
&\equiv 
0 \pmod{4}.
\end{align*}
Thus, for all $n\geq 0$, 
$$
a(16(2n+1)+4) = a(32n+20) \equiv 0 \pmod{4}.
$$
\end{proof}

\section{Congruences for coefficients $c_n(a,t)$}\label{coefficientsS}

As our focus turns to the divisibility of binomial coefficients by powers of primes (mainly $2$ and $3$), we recall a fundamental result, namely Kummer's Theorem (see, for instance, \cite{Granville95}): if $p$ is a prime, then the largest power of $p$ dividing $\binom{n}{m}$ is  equal to
$$\nu_p \left(\binom{n}{m}\right)
=\frac{S_p(m)+S_p(n-m)-S_p(n)}{p-1}.$$
where $\nu_p(k)$ is the largest power of $p$ dividing an integer $k$
and $S_p(k)$ is the sum of its base-$p$ digits. 

We also require the following Lemma, which plays a key role in studying congruences modulo powers of two.

\begin{lemma} For any integer $s\geq 1$,
\begin{equation}\label{power2cong}
(1+z)^{2^s}\equiv (1+z^2)^{2^{s-1}}\pmod{2^s}.
\end{equation}
\end{lemma}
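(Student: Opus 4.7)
The plan is to prove the congruence by induction on $s$, as the statement has a natural recursive structure: squaring both sides of the identity for exponent $2^s$ gives an identity for exponent $2^{s+1}$, and the only question is whether the error term behaves correctly.

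For the base case $s=1$, I would simply observe that $(1+z)^2 = 1 + 2z + z^2 \equiv 1 + z^2 \pmod{2}$, which matches $(1+z^2)^{2^0} = 1 + z^2$.

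For the inductive step, assume $(1+z)^{2^s} \equiv (1+z^2)^{2^{s-1}} \pmod{2^s}$, so that we can write
$$(1+z)^{2^s} = (1+z^2)^{2^{s-1}} + 2^s P(z)$$
for some polynomial $P(z) \in \mathbb{Z}[z]$. Squaring both sides yields
$$(1+z)^{2^{s+1}} = (1+z^2)^{2^s} + 2^{s+1}(1+z^2)^{2^{s-1}}P(z) + 2^{2s}P(z)^2,$$
and since $2s \geq s+1$ for $s \geq 1$, both error terms on the right are divisible by $2^{s+1}$. Hence $(1+z)^{2^{s+1}} \equiv (1+z^2)^{2^s} \pmod{2^{s+1}}$, completing the induction.

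There isn't really a main obstacle here: the argument is a textbook lifting-the-exponent-style induction, and the only thing to check is the elementary inequality $2s \geq s+1$ for $s \geq 1$. (Alternatively, one could prove the lemma by applying Kummer's theorem to each binomial coefficient $\binom{2^s}{k}$ to show that $\nu_2\bigl(\binom{2^s}{k}\bigr) \geq s - \nu_2(k)$ for $0 < k < 2^s$, so that the odd-indexed terms of $(1+z)^{2^s}$ vanish modulo $2^s$ and the even-indexed ones reproduce $(1+z^2)^{2^{s-1}}$; but the inductive squaring argument is cleaner and avoids the digit-sum computation.)
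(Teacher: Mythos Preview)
Your proof is correct and essentially identical to the paper's: both proceed by induction on $s$, verify the base case $(1+z)^2\equiv 1+z^2\pmod 2$, and in the inductive step write $(1+z)^{2^s}=(1+z^2)^{2^{s-1}}+2^sP(z)$ and square to obtain the congruence modulo $2^{s+1}$. The paper's argument is the same line-by-line, so there is nothing to add.
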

\begin{proof} We show the claim by induction with respect to the exponent $s$. For $s=1$, it holds:
$$(1+z)^{2}=1+2z+z^2\equiv 1+z^2\pmod{2}.$$
Let us now proceed to the inductive step. If the statement holds for $s$ then
$$(1+z)^{2^s}=(1+z^2)^{2^{s-1}}+2^{s} P(z)$$
for some polynomial $P$ with integer coefficients. This implies that
\begin{align*}
(1+z)^{2^{s+1}}&=\left((1+z)^{2^{s}}\right)^2\\
&=\left((1+z^2)^{2^{s-1}}+2^{s}P(z)\right)^2\\
&=(1+z^2)^{2^{s}}+2(1+z^2)^{2^{s-1}}\cdot 2^{s} P(z)+2^{2s} P(z)^2 \\
&\equiv(1+z^2)^{2^{s}}\pmod{2^{s+1}}
\end{align*}
and we are done.
\end{proof}

We now present three theorems, each establishing congruences for the coefficients $c_n(a,t)$ for $a=-2,0,1$, in that order. 

\begin{theorem}\label{Tcong-cm2} Let
$$c_n(-2,t)=(-1)^{n+t}\frac{2n}{n+t}\binom{n+t}{2t}.$$
Then \smallskip

\noindent 1)  if $s\geq 1$, $J\geq 1$, and $n\geq 0$ is even, then
$$c_n(-2,2^sJ-1)\equiv 0\pmod{2^{s+1}},$$

\noindent 2)  if $J\geq 0$ and $n\not\equiv 13,14 \pmod{27}$,  then
$$c_n(-2,27J+13)\equiv 0\pmod{3},$$

\noindent 3)  if $J\geq 1$ and $n\not\equiv \pm1 \pmod{27}$,  then
$$c_n(-2,27J-1)\equiv 0\pmod{3}.$$
\end{theorem}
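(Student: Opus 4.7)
The plan is to turn the three divisibility claims into vanishing statements for coefficients of explicit generating functions, then exploit the classical congruence $(1+z)^{p^s} \equiv (1+z^{p^{s-1}})^p \pmod{p^s}$ for $p=2,3$.

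The starting point is identity \eqref{riordan}. For $a=-2$, the factor $(a+2)^{k-t} = 0^{k-t}$ kills every term of the sum except $k=t$, while $1-az+z^2 = (1+z)^2$. The left-hand side is precisely $c_n(-2,t)$ (since $(-1)^{n-t}=(-1)^{n+t}$), so
\begin{equation*}
c_n(-2,t) \;=\; [z^n]\,\frac{z^t(1-z^2)}{(1+z)^{2t+2}} \;=\; [z^n]\,\frac{z^t(1-z)}{(1+z)^{2t+1}}.
\end{equation*}
Each claim will pick whichever of these two equivalent forms makes the exponent of $(1+z)$ fit the relevant prime power.

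For part (1) I take $t=2^s J - 1$, so that $2t+2 = 2^{s+1}J$. Applying \eqref{power2cong} with exponent $s+1$ and raising to the $J$-th power gives $(1+z)^{2^{s+1}J} \equiv (1+z^2)^{2^s J} \pmod{2^{s+1}}$. Because both sides are invertible in $\mathbb{Z}[[z]]$ (constant term $1$), inverting and reducing mod $2^{s+1}$ coefficient-wise yields
\begin{equation*}
c_n(-2,2^s J - 1) \;\equiv\; [z^n]\,\frac{z^{2^s J - 1}(1-z^2)}{(1+z^2)^{2^s J}} \pmod{2^{s+1}}.
\end{equation*}
The right-hand side is $z^{\mathrm{odd}}$ times a power series in $z^2$, so only odd powers of $z$ survive. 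Hence for every even $n$ the coefficient is $0$, which is the claim.

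For parts (2) and (3) I use the same mechanism with $p=3$, invoking $(1+z)^{27} \equiv 1+z^{27} \pmod 3$ (which follows from $(1+z)^3\equiv 1+z^3$ by repeated cubing). In part (2), $t=27J+13$ gives $2t+1 = 27(2J+1)$, so the second form of $c_n(-2,t)$ reduces mod $3$ to $[z^n]\,z^{27J+13}(1-z)/(1+z^{27})^{2J+1}$; this is a $\mathbb{Z}$-linear combination of $z^{27J+13+27m}$ and $z^{27J+14+27m}$, whose exponents are $\equiv 13,14 \pmod{27}$. In part (3), $t=27J-1$ gives $2t+2 = 27\cdot 2J$, so the first form of $c_n(-2,t)$ reduces mod $3$ to $[z^n]\,z^{27J-1}(1-z^2)/(1+z^{27})^{2J}$, a $\mathbb{Z}$-linear combination of $z^{27J-1+27m}$ and $z^{27J+1+27m}$, with exponents $\equiv \pm 1 \pmod{27}$. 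In each case a residue outside the stated set forces the coefficient to vanish mod $3$.

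The only real step of substance is the first one: recognizing that for $a=-2$ the Riordan identity \eqref{riordan} collapses to a single-term closed form and produces the tidy denominator $(1+z)^{2t+1}$. Once that is in hand, the three parts become parity/residue bookkeeping on exponents, and the only care needed is to justify inverting $(1+z)^{\text{exponent}}$ modulo $p^s$ coefficient-wise — a routine step since the relevant series have constant term $1$, so $A(z)\equiv B(z)\pmod{p^s}$ forces $A(z)^{-1}\equiv B(z)^{-1}\pmod{p^s}$ via $A^{-1}-B^{-1} = (B-A)/(AB)$.
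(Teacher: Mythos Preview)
Your proof of part (1) is essentially identical to the paper's: both invoke \eqref{riordan} with $a=-2$ to get the denominator $(1+z)^{2^{s+1}J}$, apply \eqref{power2cong}, and observe that the resulting rational function is odd.

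For parts (2) and (3), however, you take a genuinely different route. The paper uses Kummer's theorem: writing $c_n(-2,t)=\frac{n}{t}\binom{n+t-1}{2t-1}$, it computes $\nu_3$ of the binomial coefficient via the base-$3$ digit-sum formula, splitting into cases according to the residue $r$ of $n$ modulo $27$ and tracking carries explicitly. Your argument instead stays with the generating function, exploiting $(1+z)^{27}\equiv 1+z^{27}\pmod 3$ (the $p=3$ analogue of \eqref{power2cong}) together with the convenient factorizations $2t+1=27(2J+1)$ in part (2) and $2t+2=27\cdot 2J$ in part (3). This is cleaner and more uniform: the same mechanism handles all three parts, and the residue conditions $n\equiv 13,14$ and $n\equiv\pm1\pmod{27}$ drop out transparently from the exponents in the numerator rather than from a case-by-case carry analysis. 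The paper's Kummer approach, on the other hand, is more quantitative---it actually produces lower bounds like $\nu_3(c_n(-2,t))\ge 3-\nu_3(r+14)-\nu_3(r+13)$, which could in principle be pushed further---but for the bare statement your method is both shorter and conceptually simpler.
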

\begin{proof} 1) Let $t=2^sJ-1$, then by \eqref{riordan} and  \eqref{power2cong} we have that
\begin{align*}
c_n(-2,2^sJ-1)&=[z^n]\frac{z^t-z^{t+2}}{(1+2z+z^2)^{t+1}}\\
&=[z^n]\frac{z^{2^sJ-1}-z^{2^sJ+1}}{(1+z)^{2^{s+1}J}}\\
&\equiv [z^n]\frac{z^{2^sJ-1}-z^{2^sJ+1}}{(1+z^2)^{2^{s}J}} \pmod{2^{s+1}}
\end{align*}
Since $\frac{z^{2^sJ-1}-z^{2^sJ+1}}{(1+z^2)^{2^{s}J}}$ is an odd function, the right-hand side is zero when $n$ is even.

\noindent 2) Let $t=27J+13$ and $n=27N+r$ with $0\leq r<27$.  Then
$$\nu_3(c_n(-2,t))=\nu_3\left(\frac{n}{t}\binom{n+t-1}{2t-1}\right)
=\nu_3(n)+\nu_3\left(\binom{n+t-1}{2t-1}\right).$$
If $n$ is a multiple of $3$, then the congruence trivially holds. 
Assume that $r\in \{1,2,4,5,7,8,10,11\}$. Then
\begin{align*}
\nu_3(c_n(-2,t))&=\nu_3\left(\binom{n+t-1}{2t-1}\right)\\
&=\frac{1}{2}\Big(S_3(2t-1)+S_3(n-t)-S_3(n+t-1)\Big)\\
&=\frac{1}{2}\Big(S_3(27\cdot 2J+25)+S_3(27(N-J-1)+r+14)\\
&\qquad\qquad-S_3(27(N+J)+r+12)\Big)\\
&=\frac{1}{2}\Big(S_3(2J)+5+S_3(N-J-1)+S_3(r+14)\\
&\qquad\qquad-S_3(N+J)-S_3(r+12)\Big)\\
&=3-\nu_3\left(\binom{r+14}{2}\right)+\nu_3(N-J)+\nu_3\left(\binom{N+J}{2J}\right)\\
&\geq 3-\nu_3(r+14)-\nu_3(r+13)\geq 1
\end{align*}
where, in the last step, we made use of the inequality 
$$\nu_3(r+14)+\nu_3(r+13)\leq \max(\nu_3(r+14),\nu_3(r+13))<3$$
which holds because $\gcd(r+14,r+13)=1$ and $r+14<3^3$.

\noindent Similarly, for $r\in \{16,17,19,20,22,23,25,26\}$,
\begin{align*}
\nu_3(c_n(-2,t))&=3-\nu_3\left(\binom{r-13}{2}\right)+\nu_3(N+1-J)+\nu_3\left(\binom{N+1+J}{2J}\right)\\
&\geq 3-\nu_3(r-14)-\nu_3(r-13)\geq 1.
\end{align*}

\noindent The proof of 3) is similar.
\end{proof}

\begin{theorem}\label{Tcong-c0} Let
$$c_n(0,t)=(-1)^{n+t-1} \frac{2n-1}{n+t}\binom{n+t}{2t+1}.$$
Then \smallskip

\noindent 1)  if $J\geq 1$ and $n\not\equiv 0,1 \pmod{4}$, then
\begin{align*}
c_n(0,4J-1)&\equiv 0\pmod{4}\\
c_n(0,8J-1)&\equiv 0\pmod{8},
\end{align*}

\noindent 2)  if $J\geq 1$ and $n\not\equiv 0,1 \pmod{8}$, then
\begin{align*}
c_n(0,32J-1)&\equiv 0\pmod{16}\\
c_n(0,64J-1)&\equiv 0\pmod{32},
\end{align*}

\noindent 3)  if $J\geq 0$ and $n\not\equiv 13,15 \pmod{27}$,  then
$$c_n(0,27J+12)\equiv 0\pmod{3},$$

\noindent 4)  if $J\geq 1$ and $n\not\equiv \pm1 \pmod{27}$,  then
$$c_n(0,27J-1)\equiv 0\pmod{3}.$$
\end{theorem}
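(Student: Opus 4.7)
The plan is to follow the template of Theorem~\ref{Tcong-cm2}. The first step is to establish a coefficient-extraction identity for $c_n(0,t)$. Setting $a = 0$ in \eqref{riordan} with $t$ replaced by $2t+1$, and then restricting to odd coefficients in $z$ (since the right-hand side is an odd function of $z$) via the substitution $w = z^{2}$, we obtain
\begin{equation*}
c_n(0,t) \;=\; [w^{n-1}]\,\frac{w^{t}(1-w)}{(1+w)^{2(t+1)}}.
\end{equation*}
A small-$n$ check confirms the sign and normalization, which is dictated by \eqref{exform0odd}.

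For Parts 1 and 2, take $t = 2^{s}J - 1$, so that $2(t+1) = 2^{s+1}J$, and apply \eqref{power2cong} (raised to the $J$-th power) to obtain
\begin{equation*}
c_n(0,\,2^{s}J - 1) \;\equiv\; [w^{n-1}]\,\frac{w^{2^{s}J-1} - w^{2^{s}J}}{(1+w^{2})^{2^{s}J}}\pmod{2^{s+1}}.
\end{equation*}
Because $1/(1+w^{2})^{2^{s}J}$ is a series in $w^{2}$, the two numerator monomials contribute to disjoint parity classes of $w$-exponent, and the right-hand side reduces, according to the parity of $n - 1$, to $\pm\binom{2^{s}J - 1 + k}{k}$ for a specific index $k$. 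The hypothesis $n \not\equiv 0, 1 \pmod{2^{r}}$ forces a nonzero bit of $k$ among its $r-1$ lowest positions, say at position $j \le r-2$. Since $2^{s}J - 1$ has all $s$ lowest binary digits equal to $1$, the bit of $k$ at position $j$ triggers a carry that propagates through every position from $j$ up to $s - 1$, producing at least $s - (r - 2)$ carries. Kummer's theorem therefore gives $\nu_{2}\bigl(\binom{2^{s}J - 1 + k}{k}\bigr) \ge s - r + 2$, which combined with the reduction above yields the stated congruences: $(r, s) = (2, 2), (2, 3)$ cover Part 1, while $(r, s) = (3, 5), (3, 6)$ cover Part 2.

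For Parts 3 and 4, we proceed as in Theorem~\ref{Tcong-cm2}. Using $\binom{n+t}{2t+1} = \frac{n+t}{2t+1}\binom{n+t-1}{2t}$ we rewrite
\begin{equation*}
c_n(0,t) \;=\; (-1)^{n-t-1}\,\frac{2n - 1}{2t + 1}\,\binom{n+t-1}{2t}.
\end{equation*}
One verifies that $2t+1$ is coprime to $3$ for both $t = 27J + 12$ and $t = 27J - 1$, so $\nu_{3}(c_n(0,t)) = \nu_{3}(2n-1) + \nu_{3}\!\binom{n+t-1}{2t}$. The residues $n \equiv 2\pmod{3}$ are handled immediately by the factor $2n-1$. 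For the remaining residues, write $n = 27N + r$ with $0 \le r < 27$ and apply Kummer's theorem to the binomial, decomposing the base-$3$ digit sums $S_{3}(2t)$, $S_{3}(n \pm t - 1)$ along the $27$-boundary into $S_{3}(2J)$, $S_{3}(N \pm J)$, $S_{3}(N \pm J - 1)$, plus small residue contributions $S_{3}(r + c)$ for constants $c$ depending on $r$. The excluded residues $r \in \{13, 15\}$ in Part 3 and $r \equiv \pm 1 \pmod{27}$ in Part 4 are precisely those for which the resulting digit-sum lower bound becomes tight.

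The main obstacle is the case bookkeeping in Parts 3 and 4: the representation of $n \pm t - 1$ in base $3$ requires splitting $r$ into several sub-ranges (depending on whether $n \pm t - 1$ requires a carry across the $27$-boundary), and in each sub-range the decisive inequality reduces to the same coprimality estimate $\nu_{3}(r+c) + \nu_{3}(r+c+1) < 3$ already used in the proof of Theorem~\ref{Tcong-cm2}. Parts 1 and 2 are conceptually simpler; the only subtlety is tracking, for each residue class of $n \bmod 2^{r}$, the smallest bit position at which $k$ begins to interact with the low bits of $2^{s}J - 1$.
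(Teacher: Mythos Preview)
Your argument is correct, and for Parts 1--2 it follows a genuinely different route from the paper. The paper does \emph{not} use the coefficient-extraction identity here: it works directly with the rewriting
\[
c_n(0,t)=\pm\frac{2n-1}{2t+1}\binom{n+t-1}{2t},
\]
sets $t=2^{s}J-1$ and $n=2^{s}N+r$ with $1<r<2^{s}$, and computes $\nu_2\binom{n+t-1}{2t}$ via the digit-sum form of Kummer's theorem, obtaining the single inequality $\nu_2(c_n(0,t))\ge s+1-\nu_2(r)-\nu_2(r-1)$, from which both Parts~1 and~2 follow by bounding $\nu_2(r)+\nu_2(r-1)$ under the respective hypotheses on $n\bmod 4$ and $n\bmod 8$. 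Your approach instead mirrors the paper's proof of Theorem~\ref{Tcong-cm2}(1): you pass through the generating-function identity $c_n(0,t)=[w^{n-1}]\,w^{t}(1-w)/(1+w)^{2(t+1)}$ (which is correct; a direct check gives $\binom{n+t}{2t+1}+\binom{n+t-1}{2t+1}=\frac{2n-1}{n+t}\binom{n+t}{2t+1}$), reduce the denominator modulo $2^{s+1}$ via \eqref{power2cong}, and then count carries in $\binom{2^{s}J-1+k}{k}$. The paper's route is more economical---one digit-sum computation covers both parts with no auxiliary identity---while yours makes the mechanism (low bits of $2^{s}J-1$ forcing a long carry chain) more transparent and keeps the treatment parallel to Theorem~\ref{Tcong-cm2}. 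For Parts 3--4 your sketch matches the paper's, which itself only states that they ``can be carried out in a similar way.''
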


\begin{proof} We note that
$$\frac{2n-1}{n+t}\binom{n+t}{2t+1}=\frac{2n-1}{2t+1}\binom{n+t-1}{2t}.$$
For 1) and 2), let $t=2^sJ-1$ and $n=2^sN+r$ with $1<r<2^s$.  Then
\begin{align*}
\nu_2(c_n(0,t))&=\nu_2\left(\binom{n+t-1}{2t}\right)\\
&=S_2(2t)+S_2(n-t-1)-S_2(n+t-1)\\
&=S_2(2^s(J-1)+2^s-1)+S_2(2^s(N-J)+r)\\
&\qquad\qquad -S_2(2^s(N+J)+r-2)\\
&=S_2(J-1)+s+S_2(N-J)+S_2(r)-S_2(N+J)-S_2(r-2)\\
&=s-\nu_2\left(\binom{r}{2}\right)+\nu_2\left(J\binom{N+J}{2J}\right)\\
&\geq s+1-\nu_2(r)-\nu_2(r-1)\geq s.
\end{align*}
Hence, if $n\not\equiv  0,1 \pmod{4}$, then
$$\nu_2(c_n(0,t))\geq s$$
which implies 1).
On the other hand, if $n\not\equiv  0,1$ modulo $8$ then
$$\nu_2(c_n(0,t))\geq s-1$$
which implies 2).

The proofs of 3) and 4) can be carried out in a similar way.
\end{proof}

\begin{theorem}\label{Tcong-c1} Let
$$c_n(1,t)=\sum_{k=t}^n(-1)^{n-k}\frac{2n}{n+k}\binom{n+k}{2k}\binom{k}{t}3^{k-t}.$$
Then \smallskip

\noindent 1)  if $J\geq 1$ and $n$ is even, then
$$c_n(1,2J-1)\equiv 0\pmod{2},$$
\noindent 2)  if $J\geq 1$ and $n\not\equiv \pm 1\pmod{2^{s-1}}$ with $s\geq 2$, then
$$c_n(1,2^sJ-1)\equiv 0\pmod{4},$$
\noindent 3)  if $J\geq 1$ and $n\not\equiv \pm 1\pmod{32}$, then
$$c_n(1,64J-1)\equiv 0\pmod{8}.$$
\end{theorem}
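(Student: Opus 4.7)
My plan adapts the proof of Theorem \ref{Tcong-cm2} to the case $a=1$, replacing the factorization $1+2z+z^2=(1+z)^2$ used there by
$$1 - z + z^2 = \frac{1+z^3}{1+z}.$$
Substituting this into \eqref{riordan} with $a=1$ gives
$$c_n(1,t) = [z^n]\frac{(z^t - z^{t+2})(1+z)^{t+1}}{(1+z^3)^{t+1}}.$$
For $t+1=2^sJ$, I will approximate both $(1+z)^{2^sJ}$ and $(1+z^3)^{2^sJ}$ modulo the target power of $2$ using \eqref{power2cong}, making the ratio a power series in $z^M$ for some $M$; combining with the shifting factor $z^{2^sJ-1}(1-z^2)=z^{2^sJ-1}-z^{2^sJ+1}$ then forces the nonzero exponents to lie in the residue classes $\pm 1 \pmod M$.

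For claim 1, one application of \eqref{power2cong} gives $(1+z)^{2J}\equiv(1+z^2)^J$ and $(1+z^3)^{2J}\equiv(1+z^6)^J$ modulo $2$, so the ratio is a power series in $z^2$ modulo $2$; the product with the odd factor $z^{2J-1}(1-z^2)$ is then an odd function of $z$ modulo $2$, and $[z^n]$ vanishes for $n$ even.

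For claim 2, iterating \eqref{power2cong} a total of $s-1$ times produces $(1+z)^{2^s}\equiv(1+z^{2^{s-1}})^2\pmod 4$ for $s\geq 2$, and similarly for $(1+z^3)^{2^s}$. Hence modulo $4$ the ratio $(1+z)^{2^sJ}/(1+z^3)^{2^sJ}$ is a power series in $z^{2^{s-1}}$, and the general mechanism above yields the stated condition $n\not\equiv \pm 1\pmod{2^{s-1}}$.

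For claim 3 with $s=6$, the same iteration pushed one step further gives $(1+z)^{64}\equiv(1+z^{16})^4\pmod 8$, producing a series in $z^{16}$ modulo $8$ and hence vanishing for $n\not\equiv\pm 1\pmod{16}$. The hard part will be to upgrade this to the stronger condition $n\not\equiv\pm 1\pmod{32}$ claimed in the theorem. My approach would be to exploit the auxiliary congruence
$$(1-z+z^2)^{2^k} \equiv (1+z+z^2)^{2^k} \pmod{2^{k+1}},$$
which holds for all $k\geq 1$ (provable by induction from the base identity $(1-z+z^2)^2-(1+z+z^2)^2=-4z(1+z^2)$, lifting via squaring). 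Combining this with the companion factorization $1+z+z^2=(1-z^3)/(1-z)$ and a coordinated application of \eqref{power2cong} to $(1\pm z)^{64J}$ and $(1\pm z^3)^{64J}$ should produce the additional factor of $2$ of cancellation needed at those residues $\equiv\pm 1\pmod{16}$ but $\not\equiv\pm 1\pmod{32}$; tracking the correction terms with enough precision to obtain the sharper residue condition is the technically delicate step.
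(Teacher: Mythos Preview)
Your arguments for parts 1) and 2) match the paper's approach; the only cosmetic difference is that for 1) the paper bypasses generating functions entirely, rewriting each summand as $\frac{n}{t}\binom{n+k-1}{2k-1}\binom{k-1}{t-1}$ and observing that $n/t=n/(2J-1)$ is even when $n$ is.

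For part 3) you have put your finger on a genuine subtlety that the paper's one--line sketch (``similar manner \ldots\ $(1+z^m)^8\equiv(1+z^{2m})^4\pmod 8$'') glosses over. Iterating that congruence on $(1+z)^{64J}$ and $(1+z^3)^{64J}$ halts at exponent $4J$, leaving
\[
c_n(1,64J-1)\;\equiv\;[z^n]\,(z^{64J-1}-z^{64J+1})\,G(z^{16})\pmod 8,
\qquad
G(w)=\frac{(1+w)^{4J}}{(1+w^3)^{4J}}=(1-w+w^2)^{-4J},
\]
which a priori only forces $n\equiv\pm1\pmod{16}$. Your auxiliary congruence is precisely the missing ingredient, and the finish is much cleaner than you anticipate. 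From the $k=2$ case, $(1-w+w^2)^4\equiv(1+w+w^2)^4\pmod 8$, one gets $(1-w+w^2)^{4J}\equiv(1+w+w^2)^{4J}\pmod 8$ and hence, inverting,
\[
G(w)=(1-w+w^2)^{-4J}\equiv(1+w+w^2)^{-4J}=G(-w)\pmod 8.
\]
Thus $G$ is even modulo $8$, i.e.\ a power series in $w^2=z^{32}$, and the condition $n\equiv\pm1\pmod{32}$ follows at once. No further tracking of correction terms is needed; the ``technically delicate step'' you flagged dissolves into this single parity observation.
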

\begin{proof}\noindent If $k\geq t\geq 1$, then 
$$\frac{2n}{n+k}\binom{n+k}{2k}\binom{k}{t}=\frac{n}{t}\binom{n+k-1}{2k-1}\binom{k-1}{t-1},$$
which implies that if $t=2J-1$ and $n$ is even then 1) trivially holds.

\noindent 2) Let $t=2^sJ-1$. By \eqref{power2cong} for $s=2$,
$$(1+z^m)^4\equiv (1+z^{2m})^2\pmod{4},$$
and from \eqref{riordan} we obtain
\begin{align*}
c_n(1,t)&=[z^n]\frac{z^t-z^{t+2}}{(1-z+z^2)^{t+1}}\\
&=[z^n]\frac{(z^{2^sJ-1}-z^{2^sJ-1})(1+z)^{2^sJ}}{(1+z^3)^{2^sJ}}\\
&\equiv [z^n]\frac{(z^{2^sJ-1}-z^{2^sJ+1})(1+z^{2^{s-1}})^{2J}}{(1+z^{3\cdot 2^{s-1}})^{2J}}\pmod{4}
\end{align*}
where the right-hand side in $0$ as soon as $n\not\equiv \pm 1\pmod{2^{s-1}}$.

\noindent The argument for 3) proceeds in a similar manner after noting that, by \eqref{power2cong} for $s=3$,
$$(1+z^m)^8\equiv (1+z^{2m})^4\pmod{8}.$$
\end{proof}

We now have all of the tools needed to prove congruences satisfied by the functions $\modd(a,t;N)$ for various values of $a$, $t$, and $N$.  In the next three sections, we provide such results, categorized by the value of $a$.  

\section{Congruences for $a=-2$}\label{am2S}

We begin by noting that 
$$\modd(-2,1;N)\equiv
\begin{cases}
1 \pmod{2} &\text{$N$ is an odd square,}\\
0 \pmod{2}&\text{otherwise,}
\end{cases}
$$
and 
\begin{align*}
\modd(-2,1;6N+5) 
&\equiv 
\modd(1,1;6N+5) \pmod{3} \\
&=0
\end{align*}
using \eqref{modd1_1_6n5_equal_0} and \eqref{easy3}.  
Therefore, for all $N\geq 0$, 
$$\modd(-2,1;6N+5) =0 \pmod{6}.$$
Next, we recall from \cite{HirschhornSellers05} that,
if $8n+r>0$, then
\begin{equation}\label{ovc8}
\nu_2(\op(8n+r))\geq
\begin{cases}
1 &\text{if $r=0,1,4$,}\\
2 &\text{if $r=2$,}\\
3 &\text{if $r=3,5,6$,}\\
6 &\text{if $r=7$.}
\end{cases}
\end{equation}
and, if $9n+r>0$, then
$$
\nu_2(\op(9n+r))\geq
\begin{cases}
1 &\text{if $r=0,1,4,7$,}\\
2 &\text{if $r=2,5,8$,}\\
3 &\text{if $r=3,6$.}
\end{cases}
$$
Moreover, from \cite[(1.8)]{CHSZ24} and \cite[Theorem 2.1]{HirschhornSellers05b} we have, respectively,
\begin{align}
\op(16n+10)&\equiv 0 \pmod{8},\nonumber\\
\label{ovc3}
\op(27n+18)&\equiv 0 \pmod{3}
\end{align}
for all $n\geq 0$.  We now use these congruence results for $\overline{p}(n)$ to prove divisibilities for $\modd(-2,t;N)$ since, by \eqref{exformm2}, we have
$$\modd(-2,t;N)=\sum_{k+n^2=N}\op(k)\cdot c_n(-2,t).$$

For this first group of congruences we just need the divisibility properties of 
$\op(k)$.  

\begin{theorem} For all $t\geq 1$ and all $N\geq 0$, 
\begin{align}
\label{vm2A-1}
\modd(-2,t;8N+r) &\equiv 0 \pmod{4}\quad \text{with $r\in\{3,6\}$},\\
\label{vm2A-2}
\modd(-2,t;9N+r) &\equiv 0 \pmod{4}\quad \text{with $r\in\{3,6\}$},\\
\label{vm2A-3}
\modd(-2,t;8N+7) &\equiv 0 \pmod{8}.
\end{align}
\end{theorem}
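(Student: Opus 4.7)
My plan is to exploit the product structure \eqref{exformm2}, which yields the convolution formula
\[
\modd(-2,t;N)=\sum_{\substack{n\geq 1\\ n^2\leq N}}\op(N-n^2)\,c_n(-2,t),
\]
so that each stated congruence will follow if I can show that, for the specified residue class of $N$, every nonzero term $\op(N-n^2)$ (or, more precisely, every $\op$ appearing) already carries the required power of $2$. The coefficients $c_n(-2,t)$ are integers, so no divisibility information about them is needed here.

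For congruence \eqref{vm2A-1}, I would fix $N\equiv r\pmod 8$ with $r\in\{3,6\}$ and list the possible residues $n^2\equiv 0,1,4\pmod 8$. Setting $k=N-n^2$, these give the following $k\bmod 8$: for $r=3$, $k\in\{3,2,7\}$; for $r=6$, $k\in\{6,5,2\}$. Plugging each case into the bounds \eqref{ovc8} gives $\nu_2(\op(k))\geq 2$ throughout, hence $\op(k)\equiv 0\pmod 4$. Before concluding, I need to exclude the edge case $k=0$: but $k=0$ would force $N=n^2$, which is impossible since $8N+3,8N+6$ are never squares ($3,6$ are non-residues mod $8$). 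Therefore every summand is divisible by $4$, proving \eqref{vm2A-1}. Congruence \eqref{vm2A-3} is handled the same way for $r=7$: the possible residues $k\in\{7,6,3\}\pmod 8$ all satisfy $\nu_2(\op(k))\geq 3$ by \eqref{ovc8}, and $8N+7$ is not a square, so the sum is divisible by $8$.

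For \eqref{vm2A-2}, I use the analogous bound modulo $9$ displayed just after \eqref{ovc8}. The quadratic residues mod $9$ are $\{0,1,4,7\}$, so for $N\equiv 3\pmod 9$ one has $k\equiv 3,2,8,5\pmod 9$, and for $N\equiv 6\pmod 9$ one has $k\equiv 6,5,2,8\pmod 9$; in every case the cited bound gives $\nu_2(\op(k))\geq 2$. Again $k=0$ is excluded since $3,6$ are non-residues mod $9$. Hence every term in the convolution is divisible by $4$.

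I do not anticipate a serious obstacle; the argument is a short case analysis. The only thing that has to be checked carefully is that the ``boundary'' contribution $\op(0)\cdot c_{\sqrt N}(-2,t)=c_{\sqrt N}(-2,t)$ never appears, which follows in all six subcases from the elementary quadratic-residue observation that $3,6,7$ are non-residues modulo $8$ and $3,6$ are non-residues modulo $9$. Once this is noted, the three congruences drop out of a single table combining the squares modulo $8$ (respectively $9$) with the known $2$-adic valuation estimates for $\op$.
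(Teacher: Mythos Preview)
Your argument is correct and follows the same approach as the paper: both proofs use the convolution $\modd(-2,t;N)=\sum_{k+n^2=N}\op(k)\,c_n(-2,t)$ together with the $2$-adic bounds \eqref{ovc8} and the analogous mod-$9$ bounds for $\op$, checking residues of $n^2$ to see that every $\op(k)$ carries the required power of $2$. You are in fact slightly more careful than the paper in explicitly ruling out the boundary term $k=0$ via the quadratic-nonresidue observation, which the paper leaves implicit.
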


\begin{proof} We only show the proof of \eqref{vm2A-3}. The congruences \eqref{vm2A-1} and \eqref{vm2A-2} can be verified in the same way. Since $n^2 \equiv 0, 1, 4 \pmod8$, we then know that  $k+n^2=8N+7$ implies that $k\equiv 7,6,3 \pmod{8}$. For such values of $k$, by \eqref{ovc8}, $\nu_2(\op(k))\geq 3$ and therefore $\modd(-2,t;8N+7)\equiv 0\pmod8$.
\end{proof}

For the next group of congruences we have some restrictions on the parameter $t$ because the proofs depend also on the divisibility properties of the coefficients
$c_n(-2,t)$ given in Section \ref{coefficientsS}.  

\begin{theorem} For all $J\geq 0$ and all $N\geq 0$, 
\begin{align}
\label{vm2-1}
\modd(-2,2J+1;8N+r) &\equiv 0 \pmod{4}\quad \text{with $r\in\{0,4\}$},\\
\label{vm2-1b}
\modd(-2,2J;8N+2) &\equiv 0 \pmod{4},\\
\label{vm2-2}
\modd(-2,2J+1;8N+6) &\equiv 0 \pmod{8},\\
\label{vm2-2b}
\modd(-2,2J;8N+3) &\equiv 0 \pmod{8},\\
\label{vm2-2c}
\modd(-2,4J+3;8N+r) &\equiv 0 \pmod{8}\quad \text{with $r\in\{0,4\}$},\\
\label{vm2-2d}
\modd(-2,4J+2;16N+14) &\equiv 0 \pmod{8},\\
\label{vm2-3}
\modd(-2,4J;8N+7) &\equiv 0 \pmod{16},\\
\label{vm2-3a}
\modd(-2,8J+7;8N) &\equiv 0 \pmod{16},\\
\label{vm2-4}
\modd(-2,16J+15;8N) &\equiv 0 \pmod{32},\\
\label{vm2-5}
\modd(-2,32J+31;8N) &\equiv 0 \pmod{64}.
\end{align}
\end{theorem}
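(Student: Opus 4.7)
The plan is to work term by term through the convolution formula
$$\modd(-2,t;N) \;=\; \sum_{k+n^2=N}\op(k)\,c_n(-2,t),$$
which is immediate from \eqref{exformm2}, and to bound the $2$-adic valuation of each summand from below by the target exponent. The two inputs that drive every bound are the residue-based lower bounds for $\nu_2(\op(k))$ recalled in \eqref{ovc8} and the divisibility statement of Theorem \ref{Tcong-cm2}(1), which asserts $2^{s+1}\mid c_n(-2,2^s J-1)$ whenever $n$ is even. Since $\op(k)$ and $c_n(-2,t)$ each carry a residue-class flavor, splitting the sum by the parity of $n$ (and, when needed, by $n\bmod 4$ or $n\bmod 8$) decouples the two factors cleanly.

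For each of the ten congruences I would handle the odd-$n$ part first. Since $n^2\equiv 1\pmod 8$, the residue of $k=N-n^2$ modulo $8$ is fixed by $N$, and \eqref{ovc8} immediately delivers a lower bound on $\nu_2(\op(k))$; any deficit is made up by the factor $\frac{2n}{n+t}$ in $c_n(-2,t)$, which contributes $\nu_2\geq 1$ whenever $n$ is odd and $t$ is even. For the even-$n$ part, in the six congruences where $t$ fits the pattern $2^sJ'-1$---namely \eqref{vm2-1} ($t=2(J{+}1){-}1$, $s=1$), \eqref{vm2-2} ($s=1$), \eqref{vm2-2c} ($s=2$), \eqref{vm2-3a} ($s=3$), \eqref{vm2-4} ($s=4$), and \eqref{vm2-5} ($s=5$)---Theorem \ref{Tcong-cm2}(1) supplies $2^{s+1}\mid c_n(-2,t)$, which already meets or exceeds the target modulus. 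For the remaining congruences \eqref{vm2-1b}, \eqref{vm2-2b}, \eqref{vm2-2d}, and \eqref{vm2-3}, where $t$ is even, the theorem does not apply and one instead reads $\nu_2(c_n(-2,t))$ off the closed form $c_n(-2,t)=\frac{2n}{n+t}\binom{n+t}{2t}$, invoking Kummer's theorem (recalled at the start of Section~\ref{coefficientsS}) on the binomial factor when necessary.

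The main obstacle will be the finer congruences \eqref{vm2-2d} and \eqref{vm2-3}, together with the high-power cases \eqref{vm2-4} and \eqref{vm2-5}, where the dichotomy \emph{odd $n$ versus even $n$} is not sharp enough. There one must refine to residues of $n$ modulo $4$ or $8$: tracking $n^2$ modulo $16$ (or $32$) strengthens the bound on $\nu_2(\op(k))$ by one or two levels, and within the even-$n$ subsum a short Kummer computation on $\binom{n+t}{2t}$ recovers the missing factor of $2$ in $c_n(-2,t)$. Once this residue-class bookkeeping is organized---essentially a case table indexed by $n\bmod 8$---each of the ten statements reduces to a uniform inequality adding the two $\nu_2$-bounds, and the congruences follow at once.
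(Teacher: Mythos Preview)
Your approach is essentially the paper's: split the convolution by the parity of $n$, use \eqref{ovc8} for the $\op(k)$ factor, and use Theorem~\ref{Tcong-cm2}(1) (or the bare factor $\tfrac{2n}{n+t}$) for the $c_n(-2,t)$ factor. Two corrections, however.

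First, you overestimate the difficulty of \eqref{vm2-3a}, \eqref{vm2-4}, and \eqref{vm2-5}. No refinement of $n$ beyond odd versus even is needed there. For odd $n$ one has $k\equiv 7\pmod 8$, and \eqref{ovc8} already gives $\nu_2(\op(k))\ge 6$, which covers the moduli $16$, $32$, $64$ single-handedly; for even $n$, Theorem~\ref{Tcong-cm2}(1) with $s=3,4,5$ gives $2^{s+1}\mid c_n(-2,t)$, which is exactly the target. So the ``case table indexed by $n\bmod 8$'' is unnecessary here.

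Second, and more substantively, your plan for \eqref{vm2-2d} has a gap. When $n\equiv 2\pmod 4$ and $t=4J+2$ the coefficient $c_n(-2,t)$ need not be even at all (e.g.\ $c_2(-2,2)=1$), so no Kummer computation on $\binom{n+t}{2t}$ will ``recover the missing factor of $2$''. In this residue class $k\equiv 10\pmod{16}$, and \eqref{ovc8} alone only yields $\nu_2(\op(k))\ge 2$. The missing ingredient is the sharper congruence $\op(16n+10)\equiv 0\pmod 8$ (from \cite{CHSZ24}, recalled just after \eqref{ovc8} in the paper), which closes this case directly. Once you invoke that, the odd/even split plus \eqref{ovc8} handles all ten statements without any further Kummer-type analysis beyond reading off $\nu_2\!\left(\tfrac{2n}{n+t}\right)$.
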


\begin{proof} 

\noindent For \eqref{vm2-1}, if $t=2J+1$ then, by 1) of Theorem \ref{Tcong-cm2} with $s=1$, we have that if $c_n(-2,2J+1)\not\equiv 0 \pmod4$, then $n$ is odd. Hence $n^2\equiv 1 \pmod8$ and if $k+1\equiv r \pmod8$ with $r=0,4$ then, by \eqref{ovc8}, $\op(k)\equiv 0 \pmod4$.

\noindent For \eqref{vm2-1b}, let $t=2J$. Since $n^2 \equiv 0,1,4 \pmod8$, we know that $k+n^2=8N+2$ implies that $k\equiv 2,1,6 \pmod8$. By \eqref{ovc8}, if $k\equiv 2,6 \pmod8$, then $\nu_2(\op(k))\geq 2$ and we are done, whereas if $k\equiv 1\pmod8$, then we only have 
 $\nu_2(\op(k))\geq 1$. In this case it suffices to observe that 
$n$ is odd and $t$ is even which implies that $c_n(-2,t)\equiv 2 \pmod2$.
The proofs of \eqref{vm2-2}, \eqref{vm2-2b} and \eqref{vm2-3} are similar.

\noindent For \eqref{vm2-2c}, by 1) of Theorem \ref{Tcong-cm2} with $s=2$, we have that if $c_n(-2,4J+3)\not\equiv 0 \pmod8$ then $n$ is odd. Hence $n^2\equiv 1 \pmod8$ and, therefore,
$$\modd(-2,4J+3;8N+r)\equiv\sum_{k+n^2=8N+r}\op(k)\cdot c_n(-2,4J+3)\equiv 0\pmod{8}$$
because, by \eqref{ovc8}, if $k+1\equiv 0,4 \pmod8$ then $\op(k)\equiv 0 \pmod8$. The proof of \eqref{vm2-2d} can be done similarly.

\noindent For \eqref{vm2-3a}, by 1) of Theorem \ref{Tcong-cm2} with $s=3$, we have that if $c_n(-2,8J+7)\not\equiv 0 \pmod{16}$ then $n^2\equiv 1 \pmod8$ and, therefore,
$$\modd(-2,8J+7;8N)\equiv\sum_{k+n^2=8N}\op(k)\cdot c_n(-2,8J+7)\equiv 0\pmod{16}$$
because, by \eqref{ovc8}, if $k+1\equiv 0 \pmod8$ then $\op(k)\equiv 0 \pmod{16}$.
The proofs of \eqref{vm2-4} and \eqref{vm2-5} are similar.
\end{proof}

\begin{theorem}\label{m2mod3}   For all $J\geq 0$ and all $N\geq 0$, 
\begin{align}
\label{vm2-10}
\modd(-2,27J+13;27N+25) &\equiv 0 \pmod{3},\\
\label{vm2-11}
\modd(-2,27J+26;27N+19) &\equiv 0 \pmod{3}.
\end{align}
\end{theorem}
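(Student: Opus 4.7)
The plan is to combine the explicit form \eqref{exformm2} of $\Uodd_t(-2,q)$ with the divisibility results for $c_n(-2,t)$ proved in Theorem \ref{Tcong-cm2} (parts 2 and 3) together with the known congruence \eqref{ovc3} for overpartitions. Recall that \eqref{exformm2} gives
$$
\modd(-2,t;N)=\sum_{\substack{n\geq 1\\ n^2\leq N}}\op(N-n^2)\cdot c_n(-2,t),
$$
so to prove each congruence mod $3$ it suffices to show that in every surviving term of the sum, either $c_n(-2,t)$ or $\op(N-n^2)$ is divisible by $3$.

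For \eqref{vm2-10}, take $t=27J+13$. By part 2) of Theorem \ref{Tcong-cm2}, the factor $c_n(-2,27J+13)$ is already divisible by $3$ unless $n\equiv 13$ or $14\pmod{27}$. A direct computation gives $13^2\equiv 14^2\equiv 7\pmod{27}$, so in the surviving terms $n^2\equiv 7\pmod{27}$ and hence
$$
N-n^2\equiv 27N+25-7\equiv 18\pmod{27}.
$$
Then \eqref{ovc3} yields $\op(N-n^2)\equiv 0\pmod{3}$, so every term of the sum vanishes mod $3$.

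For \eqref{vm2-11}, write $27J+26=27(J+1)-1$ and apply part 3) of Theorem \ref{Tcong-cm2} with $J+1\geq 1$: then $c_n(-2,27J+26)\equiv 0\pmod{3}$ unless $n\equiv \pm 1\pmod{27}$, in which case $n^2\equiv 1\pmod{27}$. The residue becomes
$$
N-n^2\equiv 27N+19-1\equiv 18\pmod{27},
$$
and again \eqref{ovc3} forces $\op(N-n^2)\equiv 0\pmod{3}$.

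The argument is entirely a residue-matching check, with no delicate step: the only substantive input from outside this section is the congruence \eqref{ovc3} for $\overline{p}(27n+18)$, and the arithmetic of $c_n(-2,t)$ is exactly set up so that the exponents $n^2$ of the allowed terms land in residue classes complementary to $18\pmod{27}$. The main thing to be careful about is correctly identifying $27J+26$ with $27(J+1)-1$ so that part 3) of Theorem \ref{Tcong-cm2} applies with a positive index.
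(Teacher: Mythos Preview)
Your proof is correct and follows essentially the same approach as the paper: both arguments split the convolution $\modd(-2,t;N)=\sum_{k+n^2=N}\op(k)\,c_n(-2,t)$ into terms where $c_n(-2,t)\equiv 0\pmod 3$ (handled by parts 2) and 3) of Theorem~\ref{Tcong-cm2}) and surviving terms where $n^2\equiv 7$ or $1\pmod{27}$, forcing $k\equiv 18\pmod{27}$ so that \eqref{ovc3} applies. The only cosmetic issue is that you reuse the symbol $N$ both for the generic argument in the convolution formula and for the index in $27N+25$, which makes the line ``$N-n^2\equiv 27N+25-7$'' read awkwardly; writing the argument as $M=27N+25$ (or simply $k=27N+25-n^2$) would avoid the clash.
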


\begin{proof} 
\noindent For \eqref{vm2-10}, by 2) of Theorem \ref{Tcong-cm2} we have that, if $c_n(-2,27J+13)\not\equiv 0 \pmod3$, then $n^2\equiv 7 \pmod{27}$ and, therefore,
$$\modd(-2,27J+13;27N+25)\equiv\sum_{k+n^2=27N+25}\op(k)\cdot c_n(-2,27J+13)\equiv 0\pmod{3}$$
because, by \eqref{ovc3}, if $k+7\equiv 25 \pmod{27}$, then $\op(k)\equiv 0 \pmod3$.

\noindent For \eqref{vm2-11}, by 3) of Theorem \ref{Tcong-cm2} we have that, if $c_n(-2,27J+26)\not\equiv 0 \pmod3$, then $n^2\equiv 1 \pmod{27}$ and, therefore,
$$\modd(-2,27J+26;27N+19)\equiv\sum_{k+n^2=27N+19}\op(k)\cdot c_n(-2,27J+26)\equiv 0\pmod{3}$$
because, by \eqref{ovc3}, if $k+1\equiv 19 \pmod{27}$, then $\op(k)\equiv 0 \pmod{3}$.
\end{proof}

\section{Congruences for $a=0$}\label{a0S}

From \eqref{exform0even}, we know that, for all $t\geq 1$ and all $N\geq 0$, $\modd(0,2t;4N+r)=0$ for $r=1,2,3$ and
$$\modd(0,2t;4N)=\modd(-2,t,N).$$
Therefore, the congruences of the previous section can be reinterpreted in this setting.

\noindent Next, by \eqref{exform0odd}, it follows that, for all $t\geq 0$ and all $N\geq 0$, $\modd(0,2t+1;4N+r)=0$ for $r=0,2,3$ and, together with \eqref{exformW},
$$\modd(0,2t+1;4N+1)=\sum_{k+n(n-1)=N}\op(k)\cdot c_n(0,t).$$

We now prove additional congruences satisfied in the case $a=0$.  

\begin{theorem}   For all $J\geq 0$ and all $N\geq 0$, 
$$
\modd(0,2J+1;36N+r) \equiv 0 \pmod{4}\quad \text{with $r\in\{21,33\}$.}
$$
\end{theorem}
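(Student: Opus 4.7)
The plan is to mirror the method used for the analogous $\modd(-2,\cdot;\cdot)$ congruences in Section \ref{am2S} (cf.\ the proof of \eqref{vm2A-2}), relying only on the convolution identity for $\modd(0,2J+1;\cdot)$ together with the $2$-adic valuation bounds on $\op$ modulo $9$ recalled there. First I would observe that
$$36N+21 = 4(9N+5)+1 \quad\text{and}\quad 36N+33 = 4(9N+8)+1,$$
so by the identity
$$\modd(0,2J+1;4M+1)=\sum_{k+n(n-1)=M}\op(k)\cdot c_n(0,J)$$
established at the start of this section, both quantities of interest reduce to sums of the above form with $M\equiv 5$ or $M\equiv 8 \pmod{9}$.

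Next I would tabulate $n(n-1)\pmod{9}$. A direct check over one period of length $9$ confirms that $n(n-1)\pmod{9}\in\{0,2,3,6\}$ for every $n\geq 1$. Consequently, in each summand $k=M-n(n-1)$ satisfies
$$k\pmod{9}\in\{5,3,2,8\}\quad\text{if }M\equiv 5\pmod{9},$$
and
$$k\pmod{9}\in\{8,6,5,2\}\quad\text{if }M\equiv 8\pmod{9}.$$
Moreover, the case $k=0$ is excluded automatically, since $5$ and $8$ are not among the admissible residues of $n(n-1)$ modulo $9$. Then I would invoke the bound recalled at the start of Section \ref{am2S}: for $k>0$ with $k\equiv r\pmod{9}$ and $r\in\{2,3,5,6,8\}$ one has $\nu_2(\op(k))\geq 2$. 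Since every residue of $k$ that appears above lies in $\{2,3,5,6,8\}$, we get $\op(k)\equiv 0\pmod{4}$ in every term of the sum, and the claimed congruence modulo $4$ follows.

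The only conceptual point to be careful about is the boundary case $k=0$, which would otherwise require separate treatment since $\op(0)=1$; however, the residue tabulation rules it out for free. Apart from that, the proof is entirely mechanical and fits the template used repeatedly in the $a=-2$ section, so no serious obstacle is anticipated.
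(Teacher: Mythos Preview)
Your proposal is correct and follows essentially the same argument as the paper's proof: reduce $36N+r$ to $4M+1$ with $M\equiv 5$ or $8\pmod 9$, use the convolution identity, enumerate $n(n-1)\pmod 9\in\{0,2,3,6\}$, and conclude via the $2$-adic bounds on $\op(k)$ for $k\not\equiv 0,1,4,7\pmod 9$. Your explicit treatment of the $k=0$ boundary case and of $r=33$ is a bit more detailed than the paper's, but the method is the same.
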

\begin{proof} Let $r=21$. Since $36N+21=4(9N+5)+1$ and $n(n-1) \equiv 0, 2, 3, 6 \pmod9$, we know that $k+n(n-1)=9N+5$ implies that $k\equiv 5,3,2,8 \pmod9$.  For such values of $k$, $\op(k) \equiv 0 \pmod4$. A similar proof works also for the case $r=33$.
\end{proof}

\begin{theorem}   For all $J\geq 0$ and all $N\geq 0$, 
\begin{align}
\label{v0odd-1}
\modd(0,8J+7;16N+r) &\equiv 0 \pmod{4}\quad \text{with $r\in\{9,13\}$},\\
\label{v0odd-2}
\modd(0,16J+15;16N+13) &\equiv 0 \pmod{8},\\
\label{v0odd-3}
\modd(0,64J+63;32N+29) &\equiv 0 \pmod{16}\\
\label{v0odd-3b}
\modd(0,128J+127;32N+29) &\equiv 0 \pmod{32}.
\end{align}
\end{theorem}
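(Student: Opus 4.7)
The plan is to exploit the convolution
$$\modd(0,2t+1;4M+1)=\sum_{k+n(n-1)=M}\op(k)\cdot c_n(0,t)$$
recalled at the start of this section, combined with the 2--adic estimates \eqref{ovc8} for the overpartition function and parts 1)--2) of Theorem \ref{Tcong-c0} for the coefficients $c_n(0,t)$. For each of the four congruences, I first rewrite the top index $2t+1$ in the form $2(2^{s}J'-1)+1$ so that Theorem \ref{Tcong-c0} applies directly; this determines a required residue class for $n$ modulo a power of $2$, which in turn determines the residue of $n(n-1)$ and hence of $k=M-n(n-1)$ modulo $8$, where \eqref{ovc8} can be invoked.

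For \eqref{v0odd-1}, write $8J+7=2(4(J+1)-1)+1$. By the first statement of part 1) of Theorem \ref{Tcong-c0}, only those $n$ with $n\equiv 0,1\pmod 4$ contribute modulo $4$, and for such $n$ one has $n(n-1)\equiv 0\pmod 4$. Since $16N+9=4(4N+2)+1$ and $16N+13=4(4N+3)+1$, the surviving $k$'s are congruent to $2$ or $3$ modulo $4$, and \eqref{ovc8} gives $\nu_2(\op(k))\geq 2$ in every such class. Multiplying termwise then yields $\equiv 0\pmod 4$.

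For \eqref{v0odd-2}, the same strategy with $16J+15=2(8(J+1)-1)+1$ and the second statement of part 1) of Theorem \ref{Tcong-c0} gives $c_n(0,8(J+1)-1)\equiv 0\pmod 8$ unless $n\equiv 0,1\pmod 4$, in which case $n(n-1)\equiv 0\pmod 4$; since $16N+13=4(4N+3)+1$, the surviving $k$'s lie in $\{3,7\}\pmod 8$, where \eqref{ovc8} gives $\nu_2(\op(k))\geq 3$, enough for the target. Congruences \eqref{v0odd-3} and \eqref{v0odd-3b} run along the same lines but through part 2) of Theorem \ref{Tcong-c0}. Writing $64J+63=2(32(J+1)-1)+1$ and $128J+127=2(64(J+1)-1)+1$, we get $c_n\equiv 0$ modulo $16$ and modulo $32$ respectively, unless $n\equiv 0,1\pmod 8$; for such $n$ one has $n(n-1)\equiv 0\pmod 8$, and because $32N+29=4(8N+7)+1$, the surviving $k$'s are all $\equiv 7\pmod 8$, where \eqref{ovc8} supplies the generous bound $\nu_2(\op(k))\geq 6$.

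The argument is essentially bookkeeping, so there is no single hard step; the main point is to verify in each case that the sum $\nu_2(c_n(0,t))+\nu_2(\op(k))$ meets the stated power of $2$ for every $n$ and $k$ contributing to the convolution. This works with slack in \eqref{v0odd-3} and \eqref{v0odd-3b} and is essentially tight in \eqref{v0odd-1} and \eqref{v0odd-2}; the key structural input is the lucky alignment between the ``$n\equiv 0,1\pmod{2^{s-1}}$'' exceptional classes produced by Theorem \ref{Tcong-c0} and the fact that they force $n(n-1)$ to vanish modulo the same power of $2$, so that the residue of $k$ is fully pinned down by that of $M$.
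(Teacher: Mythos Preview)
Your proof is correct and follows essentially the same approach as the paper: both use the convolution $\modd(0,2t+1;4M+1)=\sum_{k+n(n-1)=M}\op(k)\,c_n(0,t)$, invoke Theorem~\ref{Tcong-c0} to restrict the residue of $n$ (and hence of $n(n-1)$) on the terms that survive modulo the target power of $2$, and then apply \eqref{ovc8} to the resulting residue class of $k$. Your citation of part~2) of Theorem~\ref{Tcong-c0} for \eqref{v0odd-3} and \eqref{v0odd-3b} is in fact the correct one (the paper's text reads ``by 1)'' there, which appears to be a typo, since the conclusion $n(n-1)\equiv 0\pmod 8$ requires the $n\equiv 0,1\pmod 8$ information from part~2)).
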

\begin{proof} For \eqref{v0odd-1}, by 1) of Theorem \ref{Tcong-c0} we know that, if $c_n(0,4J+3)\not\equiv 0 \pmod4$, then $n(n-1)\equiv 0 \pmod4$ and, therefore,
$$\modd(0,8J+7;16N+4r+1)\equiv\sum_{k+n(n-1)=4N+r}\op(k)\cdot c_n(0,4J+3)\equiv 0\pmod{4}$$
because, by \eqref{ovc8}, if $k\equiv 2,3 \pmod4$, then $\op(k)\equiv 0 \pmod4$.

\noindent For \eqref{v0odd-2}, by 1) of Theorem \ref{Tcong-c0} we have that, if $c_n(0,8J+7)\not\equiv 0 \pmod8$, then $n(n-1)\equiv 0\pmod4$ and, therefore,
$$\modd(0,16J+15;16N+13)\equiv\sum_{k+n(n-1)=4N+3}\op(k)\cdot c_n(0,8J+7)\equiv 0\pmod{8}$$
because, by \eqref{ovc8}, if $k\equiv 3 \pmod4$, then $\op(k)\equiv 0\pmod8$.

\noindent For \eqref{v0odd-3}, by 1) of Theorem \ref{Tcong-c0} we have that, if $c_n(0,32J+31)\not\equiv 0 \pmod8$, then $n(n-1)\equiv 0 \pmod8$ and, therefore,
$$\modd(0,64J+63;32N+29)\equiv\sum_{k+n(n-1)=8N+7}\op(k)\cdot c_n(0,32J+31)\equiv 0\pmod{16}$$
because, by \eqref{ovc8}, if $k\equiv 7\pmod8$, then $\op(k)\equiv 0 \pmod{16}$.
The remaining congruence \eqref{v0odd-3b} can be done in a similar way.
\end{proof} 

\begin{theorem}   For all $J\geq 0$ and all $N\geq 0$, 
\begin{align}
\label{v0odd-4}
\modd(0,54J+25;108N+49) &\equiv 0 \pmod{3},\\
\label{v0odd-5}
\modd(0,54J+53;108N+73) &\equiv 0 \pmod{3}.
\end{align}
\end{theorem}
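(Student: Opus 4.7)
The plan is to mirror the strategy of Theorem~\ref{m2mod3}, exploiting the convolution
$$\modd(0,2t+1;4M+1) = \sum_{\substack{m+n(n-1)=M\\ n\geq 1}} \op(m)\cdot c_n(0,t)$$
that follows from \eqref{exform0odd}--\eqref{exformW}. Since $108N+49 = 4(27N+12)+1$ with $54J+25 = 2(27J+12)+1$, and $108N+73 = 4(27N+18)+1$ with $54J+53 = 2(27J+26)+1$, both claims reduce to showing that the above convolution vanishes modulo $3$ when $M \equiv 12 \pmod{27}$ (for \eqref{v0odd-4}) or $M \equiv 18 \pmod{27}$ (for \eqref{v0odd-5}).

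For \eqref{v0odd-4}, Theorem~\ref{Tcong-c0} part 3) restricts the contributing $n$ modulo $3$ to the classes $n \equiv 13,15 \pmod{27}$. Both satisfy $n(n-1) \equiv 21 \pmod{27}$, hence $m \equiv 18 \pmod{27}$, and \eqref{ovc3} then yields $\op(m) \equiv 0 \pmod{3}$. For \eqref{v0odd-5}, rewriting $t = 27J+26$ as $27(J+1)-1$ lets Theorem~\ref{Tcong-c0} part 4) restrict the contributing $n$ to $n \equiv \pm 1 \pmod{27}$. The case $n \equiv 1 \pmod{27}$ gives $n(n-1) \equiv 0 \pmod{27}$, so $m \equiv 18 \pmod{27}$ and \eqref{ovc3} again suffices. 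The subtle case is $n \equiv -1 \pmod{27}$, where $n(n-1) \equiv 2 \pmod{27}$ forces $m \equiv 16 \pmod{27}$, a residue class in which $\op(m)$ need not be divisible by $3$.

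To dispatch this last case, I would sharpen Theorem~\ref{Tcong-c0} part 4) for $n \equiv -1 \pmod{27}$ by a direct Kummer computation, in the style of the arguments in Section~\ref{coefficientsS}. Using
$$c_n(0,t) = (-1)^{n-t-1}\frac{2n-1}{2t+1}\binom{n+t-1}{2t}$$
with $n = 27N'-1$ and $t = 27J+26$, one checks $\nu_3(2n-1)=1$ and $\nu_3(2t+1)=0$, while the base-$3$ expansions $2t = 27(2J+1)+25$ and $n-t-1 = 27(N'-J-2)+26$ force three carries in the addition $2t+(n-t-1)$ (coming from the low digits $(2,2,1)+(2,2,2)$ in base $3$). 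This yields $\nu_3\bigl(\binom{n+t-1}{2t}\bigr)\geq 3$, hence $\nu_3(c_n(0,27J+26)) \geq 4$, so $c_n(0,27J+26) \equiv 0 \pmod{3}$ after all. The main obstacle is exactly this sharpening, but the bookkeeping is fully parallel to the existing arguments in Section~\ref{coefficientsS}.
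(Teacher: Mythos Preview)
Your argument is correct and follows the same convolution strategy as the paper. The one substantive difference is in \eqref{v0odd-5}: the paper simply asserts that if $c_n(0,27J+26)\not\equiv 0\pmod 3$ then $n(n-1)\equiv 0\pmod{27}$, whereas you take Theorem~\ref{Tcong-c0}\,(4) at face value (exceptional classes $n\equiv \pm 1\pmod{27}$) and supply an extra Kummer computation for the class $n\equiv -1$, where $n(n-1)\equiv 2$. Your computation is sound; in fact it is more than needed, since for $n\equiv -1\pmod{27}$ one already has $\nu_3(2n-1)=1$ and $\nu_3(2t+1)=0$, so $\nu_3(c_n(0,t))\geq 1$ without ever touching the binomial coefficient.

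What is really going on is that the stated exceptional set in Theorem~\ref{Tcong-c0}\,(4) appears to be a typo: the correct exclusions are $n\equiv 0,1\pmod{27}$, not $n\equiv \pm 1$. One checks directly that $c_{27}(0,26)=1$ and $c_{28}(0,26)=-55$ are not divisible by $3$, while your calculation confirms $c_n(0,26)\equiv 0\pmod 3$ for $n\equiv -1$. With the corrected exceptional set both classes give $n(n-1)\equiv 0\pmod{27}$, and the paper's one-line argument via \eqref{ovc3} goes through unchanged. So your ``sharpening'' is exactly the patch that reconciles the proof with the theorem as printed.
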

\begin{proof} For \eqref{v0odd-4}, by 3) of Theorem \ref{Tcong-c0} we have that, if $c_n(0,27J+12)\not\equiv 0 \pmod3$, then $n(n-1)\equiv 21 \pmod{27}$ and, therefore,
$$\modd(0,54J+25;108N+49)\equiv\sum_{k+n(n-1)=27N+12}\op(k)\cdot c_n(0,27J+12)\equiv 0\pmod{3}$$
because, by \eqref{ovc3}, if $k+21\equiv 12 \pmod{27}$, then $\op(k)\equiv 0 \pmod3$.

\noindent For \eqref{v0odd-5}, by 4) of Theorem \ref{Tcong-c0} we have that, if $c_n(0,27J+26)\not\equiv 0 \pmod{3}$, then $n(n-1)\equiv 0 \pmod{27}$ and, therefore,
$$\modd(0,54J+53;108N+73)\equiv\sum_{k+n(n-1)=27N+18}\op(k)\cdot c_n(0,27J+26)\equiv 0\pmod{3}$$
because, by \eqref{ovc3}, if $k\equiv 18 \pmod{27}$, then $\op(k)\equiv 0 \pmod3$.
\end{proof} 

\section{Congruences for $a=1$}\label{a1S}

In Section 4, we gathered results that can now be summarized in the following two convenient forms:
 
\noindent
If $24n+r>0$, then
\begin{equation}\label{pre1-24}
\nu_2(a(24n+r))\geq
\begin{cases}
1 &\text{if $r=0,4,10,12,13,14,20$,}\\
2 &\text{if $r=6,16,18,22$,}\\
3 &\text{if $r=9,19,21$.}
\end{cases}
\end{equation}

\noindent 
If $32n+r>0$, then
\begin{equation}\label{pre1-32}
\nu_2(a(32n+r))\geq
\begin{cases}
1 &\text{if $r=4,10,12,14,16,24,26,30$,}\\
2 &\text{if $r=6,20,22$,}\\
3 &\text{if $r=28$.}
\end{cases}
\end{equation}
By \eqref{exform1}, we also know that
$$\modd(1,t;N)=\sum_{k+n^2=N}a(k)\cdot c_n(1,t).$$

We now use the above to prove congruences satisfied by $\modd(1,t;N)$ for various values of $t$ and $N$.  

\begin{theorem} For all $J\geq 0$ and $N\geq 0$, 
\begin{align}
\label{v1-0}
\modd(1,J;24N+22) &\equiv 0 \pmod{2},\\     
\label{v1-0b}
\modd(1,2J+1;12N+7) &\equiv 0 \pmod{2},\\   
\label{v1-0c}
\modd(1,4J+3;24N+7) &\equiv 0 \pmod{4}.
\end{align}
\end{theorem}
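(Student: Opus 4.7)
The starting point is the factorization
$$\modd(1,t;N)=\sum_{k+n^2=N}a(k)\,c_n(1,t)$$
from \eqref{exform1}. The plan for all three congruences is to combine the 2-adic lower bounds on $a(k)$ recorded in \eqref{pre1-24} with the divisibility statements for $c_n(1,t)$ in Theorem \ref{Tcong-c1}, after a short CRT computation of the admissible residues of $n^2$ modulo $12$ or $24$. In each case one first restricts (if needed) the parity of $n$ using Theorem \ref{Tcong-c1}, then reads off the residue of $k$ modulo $24$ and quotes \eqref{pre1-24}.

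For \eqref{v1-0}, Theorem \ref{Tcong-c1} is not even needed. The quadratic residues modulo $24$ are $\{0,1,4,9,12,16\}$, so if $k+n^2=24N+22$ then $k$ lies modulo $24$ in $\{22,21,18,13,10,6\}$. Every residue in this list appears in \eqref{pre1-24} with $\nu_2(a(k))\ge 1$, and hence each summand is even.

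For \eqref{v1-0b} I would first apply part~1) of Theorem \ref{Tcong-c1}, written in the form $t=2(J+1)-1$ with $J+1\ge 1$, to conclude that $c_n(1,2J+1)\equiv 0\pmod 2$ whenever $n$ is even. Only the odd values of $n$ can contribute modulo $2$, and for these one has $n^2\equiv 1$ or $9\pmod{12}$, so $k\equiv 6$ or $10\pmod{12}$, i.e.\ $k\in\{6,10,18,22\}\pmod{24}$. Each of these residues again satisfies $\nu_2(a(k))\ge 1$ by \eqref{pre1-24}, so every surviving summand is even.

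For \eqref{v1-0c} the argument is essentially symmetric in the two parities of $n$. When $n$ is even, part~2) of Theorem \ref{Tcong-c1} with $s=2$ applied to $t=4(J+1)-1$ gives $c_n(1,4J+3)\equiv 0\pmod 4$. When $n$ is odd, $n^2\equiv 1$ or $9\pmod{24}$, so $k\equiv 22$ or $6\pmod{24}$, and in both cases \eqref{pre1-24} yields $\nu_2(a(k))\ge 2$, i.e.\ $a(k)\equiv 0\pmod 4$. Either way the product $a(k)\,c_n(1,4J+3)$ is divisible by $4$. The only real work in the whole argument is the residue bookkeeping — matching the square residues modulo $24$ with the table in \eqref{pre1-24} and checking that the parameter shifts $J\mapsto J+1$ keep us in the admissible range $J'\ge 1$ of Theorem \ref{Tcong-c1}; once those are in hand the proof is essentially mechanical.
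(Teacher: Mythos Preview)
Your proof is correct and follows essentially the same approach as the paper: both combine the factorization $\modd(1,t;N)=\sum_{k+n^2=N}a(k)\,c_n(1,t)$ with the $2$-adic bounds on $a(k)$ from \eqref{pre1-24} and the divisibility of $c_n(1,t)$ from Theorem~\ref{Tcong-c1}, after determining the admissible residues of $n^2$ modulo $24$. The only cosmetic difference is that you split by the parity of $n$ first and then read off the residue of $k$, whereas the paper lists all residues of $n^2\pmod{24}$ and then identifies which correspond to even $n$; the ingredients and logic are identical.
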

\begin{proof} We only show the proof of \eqref{v1-0c}; the proofs of  \eqref{v1-0} and \eqref{v1-0b} follow in similar fashion.  

Note that, for any $n$, $n^2 \equiv 0, 1, 4, 9, 12, 16 \pmod{24}$, which implies that $k\equiv 7-n^2 \pmod{24}$ must satisfy $k\equiv 7,6,3,22,15 \pmod{24}$. By \eqref{pre1-24}, if $k\equiv 6,22 \pmod{24}$, then $a(k) \equiv 0 \pmod4$. In the other cases, when $k\equiv 7,3,15 \pmod{24}$, then $n$ has to be even, and by 2) of Theorem~\ref{Tcong-c1} with $s=2$, $c_n(1,4J+3) \equiv 0 \pmod4$. Thus, we may conclude that $\modd(1,4J+3;24N+7)\equiv 0 \pmod4$.
\end{proof}

\begin{theorem}   For all $J\geq 0$ and all $N\geq 0$, 
\begin{align}
\label{v1-1}
\modd(1,2J+1;8N+r) &\equiv 0 \pmod{2}\quad \text{with $r\in\{5,7\}$},\\
\label{v1-2}
\modd(1,16J+15;16N+7) &\equiv 0 \pmod{4}\\
\label{v1-2b}
\modd(1,32J+31;32N+r) &\equiv 0 \pmod{4}\quad \text{with $r\in\{21,29\}$},\\
\label{v1-2c}
\modd(1,64J+63;32N+29) &\equiv 0 \pmod{8}.
\end{align}
\end{theorem}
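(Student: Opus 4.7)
The plan is to repeat the strategy used in the previous two theorems: apply the convolution
$$\modd(1,t;N)=\sum_{k+n^2=N}a(k)\cdot c_n(1,t)$$
from \eqref{exform1}, and combine the divisibility of the $c_n(1,t)$'s (Theorem \ref{Tcong-c1}) with the parity/$2$-adic information on the $a(k)$'s collected in \eqref{pre1-24} and \eqref{pre1-32}. For each of the four cases I would choose $s$ so that Theorem \ref{Tcong-c1} forces $c_n(1,t)$ to be divisible by the desired power of $2$ except when $n$ lies in a small residue class modulo $2^{s-1}$; then I would check that, for precisely those exceptional $n$, the matching residue $k=N-n^2$ falls into a class where \eqref{pre1-24} or \eqref{pre1-32} supplies the needed divisibility of $a(k)$.

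Concretely, for \eqref{v1-1} write $2J+1=2(J+1)-1$ and apply part (1) of Theorem \ref{Tcong-c1}: $c_n(1,2J+1)\equiv 0\pmod 2$ whenever $n$ is even, so I only need to deal with $n$ odd. Then $n^2\equiv 1\pmod 8$ forces $k\equiv 4\pmod 8$ for $r=5$ and $k\equiv 6\pmod 8$ for $r=7$; the corresponding residues of $k$ modulo $24$ are $\{4,12,20\}$ and $\{6,14,22\}$, and every one of them satisfies $\nu_2(a(k))\geq 1$ by \eqref{pre1-24}. For \eqref{v1-2} write $16J+15=2^4(J+1)-1$ and apply part (2) with $s=4$: $c_n(1,16J+15)\equiv 0\pmod 4$ unless $n\equiv\pm1\pmod 8$, and in that case $n^2\equiv 1\pmod{16}$ gives $k\equiv 6\pmod{16}$, i.e. $k\equiv 6$ or $22\pmod{32}$; by \eqref{pre1-32}, $\nu_2(a(k))\geq 2$ in either case.

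The remaining two congruences follow the same recipe. For \eqref{v1-2b}, write $32J+31=2^5(J+1)-1$ and invoke part (2) of Theorem \ref{Tcong-c1} with $s=5$: the only $n$'s for which $c_n(1,32J+31)$ is not already $\equiv 0\pmod 4$ satisfy $n\equiv\pm 1\pmod{16}$, hence $n^2\equiv 1\pmod{32}$, hence $k\equiv r-1\pmod{32}$ equals $20$ or $28$, and \eqref{pre1-32} shows $\nu_2(a(k))\geq 2$ (in fact $\geq 3$ when $r=29$). For \eqref{v1-2c}, write $64J+63=64(J+1)-1$ and apply part (3) of Theorem \ref{Tcong-c1}: the exceptional $n$'s have $n\equiv\pm 1\pmod{32}$, so $n^2\equiv 1\pmod{32}$ and $k\equiv 28\pmod{32}$, which by \eqref{pre1-32} satisfies $\nu_2(a(k))\geq 3$.

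I do not anticipate any real obstacle; once the dissection tables \eqref{pre1-24} and \eqref{pre1-32} and the binomial estimates of Theorem \ref{Tcong-c1} are in hand, the theorem is pure bookkeeping. The one bit of care needed is the reindexing from the form $2^sJ-1$ in Theorem \ref{Tcong-c1} (valid for $J\geq 1$) to the form $2^sJ'+(2^s-1)$ in the target congruences (with $J'\geq 0$), together with the routine but not entirely trivial check that the relevant $n^2\bmod M$ lands in exactly one class, so that the surviving values of $k$ all lie in the rows of the tables that give the required power of $2$.
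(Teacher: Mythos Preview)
Your proposal is correct and follows essentially the same convolution argument as the paper, combining Theorem~\ref{Tcong-c1} with the tables \eqref{pre1-24}--\eqref{pre1-32}. Your choice of $s=4$ in part~(2) of Theorem~\ref{Tcong-c1} for \eqref{v1-2} is in fact tighter than the paper's stated $s=2$: it forces $n^2\equiv 1\pmod{16}$ (rather than merely $\pmod 8$), so that $k\equiv 6\pmod{16}$ lands only in the rows $6,22$ of \eqref{pre1-32} where $\nu_2(a(k))\geq 2$, which is exactly what is needed.
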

\begin{proof} 

\noindent For \eqref{v1-1}, by 1) of Theorem \ref{Tcong-c1} we know that, if $c_n(1,2J+1)\not\equiv 0 \pmod2$, then $n$ is odd. Therefore, $n^2\equiv 1 \pmod8$ and 
$$\modd(1,2J+1;8N+r)\equiv\sum_{k+n^2=8N+r}\op(k)\cdot c_n(1,2J+1)\equiv 0\pmod{2}$$
because, by \eqref{pre1-32}, if $k+1\equiv 5,7 \pmod8$, $a(k)\equiv 0 \pmod2$.

\noindent For \eqref{v1-2}, by 2) of Theorem \ref{Tcong-c1} with $s=2$ we know that, if $c_n(1,16J+15)\not\equiv 0 \pmod4$, then $n^2\equiv 1 \pmod8$ and, therefore,
$$\modd(1,16J+15;16N+7)\equiv\sum_{k+n^2=16N+7}\op(k)\cdot c_n(-2,16J+15)\equiv 0\pmod{4}$$
because, by \eqref{pre1-32}, if $k+1\equiv 7 \pmod{16}$, then $a(k)\equiv 0 \pmod4$.

\noindent The proofs of \eqref{v1-2b} and \eqref{v1-2c} are similar.
\end{proof}

We close by presenting the following pair of congruences modulo 3 which are a direct consequence of \eqref{easy3} and Theorem \ref{m2mod3}. They can also be proved by using \eqref{ovc3}.

\begin{theorem}   For all $J\geq 0$ and all $N\geq 0$, 
\begin{align*}
\modd(1,27J+13;27N+25) &\equiv 0 \pmod{3},\\
\modd(1,27J+26;27N+19) &\equiv 0 \pmod{3}.
\end{align*}
\end{theorem}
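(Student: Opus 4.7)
The plan is to reduce both congruences to Theorem \ref{m2mod3} by means of the congruence \eqref{easy3}. Recall that \eqref{easy3} asserts
$$\Uodd_t(-2,q)\equiv \Uodd_t(1,q)\pmod 3$$
for every $t\geq 1$. Extracting the coefficient of $q^N$ on each side immediately yields
$$\modd(1,t;N)\equiv \modd(-2,t;N)\pmod 3$$
uniformly in $t\geq 1$ and $N\geq 0$.

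Next I would specialize this congruence at the pairs $(t,N)=(27J+13,\,27N+25)$ and $(t,N)=(27J+26,\,27N+19)$. In each case the right-hand side vanishes modulo $3$ by Theorem \ref{m2mod3} (the congruences \eqref{vm2-10} and \eqref{vm2-11}, respectively), giving exactly the two congruences claimed.

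There is no real obstacle, since both ingredients have already been assembled. The substantive work was done in Theorem \ref{m2mod3}, where the divisibility of $c_n(-2,t)$ coming from Kummer's theorem (parts 2 and 3 of Theorem \ref{Tcong-cm2}) was combined with the congruence \eqref{ovc3} for overpartitions $\op(27n+18)\equiv 0\pmod 3$. If one wished to bypass the $a=-2$ route and use \eqref{ovc3} directly as hinted, the argument would instead proceed from
$$\modd(1,t;N)=\sum_{k+n^2=N} a(k)\,c_n(1,t),$$
using the factorization $(1-z+z^2)^{t+1}=(1+z^3)^{t+1}/(1+z)^{t+1}$ together with the Frobenius identity $(1+z)^{3^s}\equiv(1+z^{3^s})\pmod 3$ to extract a mod-$3$ vanishing of $c_n(1,t)$ for the required residue classes of $n$, and then invoking \eqref{ovc3} for $a(k)$ via the two-dissection of $A(q)$ established in Section \ref{pre2S}. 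This alternative would duplicate essentially the same arithmetic content; the one-line reduction through \eqref{easy3} and Theorem \ref{m2mod3} is evidently the cleanest presentation and is what I would write down.
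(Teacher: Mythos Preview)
Your main argument is correct and is exactly the route the paper itself indicates: the theorem is stated there as ``a direct consequence of \eqref{easy3} and Theorem \ref{m2mod3},'' and your reduction via $\modd(1,t;N)\equiv\modd(-2,t;N)\pmod 3$ followed by \eqref{vm2-10}--\eqref{vm2-11} carries this out precisely. (Your side remark on the alternative is slightly off: the way \eqref{ovc3} enters for $a=1$ is through the observation $A(q)=\tfrac{f_1f_6}{f_2^2f_3}\equiv \tfrac{f_2}{f_1^2}\pmod 3$, so $a(k)\equiv\op(k)\pmod 3$, rather than via the $2$--dissection of Section~\ref{pre2S}.)
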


\end{document}